\DeclareFontFamily{U}{mathx}{}
\DeclareFontShape{U}{mathx}{m}{n}{<-> mathx10}{}
\DeclareSymbolFont{mathx}{U}{mathx}{m}{n}
\DeclareMathAccent{\widecheck}{0}{mathx}{"71}
\def\wh#1{\widehat{#1}}
\newcommand{\C}{\mathbb{C}}
\newcommand{\N}{\mathbb{N}}
\newcommand{\R}{\mathbb{R}}
\newcommand{\Z}{\mathbb{Z}}
\newcommand{\supp}{\operatorname{supp}}
\newcommand{\dist}{\operatorname{dist}}
\newtheorem{thm}{Theorem}[section]
\newtheorem{prop}[thm]{Proposition}
\newtheorem{coro}[thm]{Corollary}
\newtheorem{lemma}[thm]{Lemma}
\numberwithin{equation}{section}
\newtheorem{rem}{Remark}
\newcommand\CL{{\mathcal L}}
\newcommand\CP{{\mathcal P}}
\newcommand{\ep}{{\varepsilon}}
\newcommand{\inp}[2]{\langle #1, #2\rangle}
\newcommand{\Be}{\begin{equation}}
\newcommand{\Ee}{\end{equation}}
\begin{document}

\author[Jeong]{Eunhee Jeong}
\address[Jeong]{Department of Mathematics Education, and  Institute of Pure and Applied Mathematics, Jeonbuk National University, Jeonju 54896, Republic of Korea}
\email{eunhee@jbnu.ac.kr}

\author[Lee]{Sanghyuk Lee}
\address[Lee]{Department of Mathematical Sciences and RIM, Seoul National University, Seoul 08826, Republic of  Korea}
\email{shklee@snu.ac.kr}

\author[Ryu]{Jaehyeon Ryu}
\address[Ryu]{School of Mathematics, Korea Institute for Advanced Study, Seoul
02455, Republic of Korea} 
\email{jhryu@kias.re.kr}

\keywords{Almost everywhere convergence, Bochner--Riesz means, Twisted Laplacian}
\subjclass[2010]{42B15, 42B25, 42C10.}

\title[Almost everywhere convergence]{Almost everywhere convergence of Bochner--Riesz means for the 
twisted laplacian}
\begin{abstract} 
Let $\mathcal L$ denote the twisted Laplacian in $\C^d$.   
We study almost everywhere convergence of the Bochner--Riesz mean $S^\delta_{t}(\mathcal L) f$ of $f\in L^p(\mathbb C^d)$   as $t\to \infty$, which is an expansion  
of $f$ in the special Hermite functions.  For $2\le p\le \infty$, we obtain the sharp range of the summability indices $\delta$ for which the convergence of $S^\delta_{t}(\mathcal L) f$ holds for all $f\in L^p(\C^d)$.
\end{abstract}

\maketitle

\section{Introduction}
Almost everywhere convergence of the Bochner--Riesz mean 
\[
S_t^\delta f(x) = \frac{1}{(2\pi)^d} \int_{\mathbb R^d} e^{i\inp{x}{\xi}} \Big(1-\frac{|\xi|^2}{t^2}\Big)_+^\delta \widehat{f}(\xi) d\xi, \quad \delta \ge 0
\]
as $t\to \infty$ has been an important topic in classical harmonic analysis.  
In analogue to the Bochner--Riesz conjecture which concerns $L^p$ convergence of $S_t^\delta f$, the problem of determining the optimal summability index $\delta$ (depending on $p$) for which $S_t^\delta f\to f$ almost everywhere (abbreviated to a.e. in what follows) for every $f\in L^p(\R^d)$ has been extensively studied by various authors   (\cite{St58, Ca83, CRV88, Ch85}). In particular, 
 for $2\le p\le \infty$, this problem was essentially settled by Carbery--Rubio de Francia--Vega \cite{CRV88}. They proved that a.e. convergence holds for any $f\in L^p(\R^d)$ if 
\begin{align}\label{c:deltacbr}
\delta > \delta(p,d) := \max{\Big(0,  d\Big(\frac12-\frac1p\Big)-\frac12\Big)}
\end{align}
 for $2\le p < \infty$.  Discussions on the necessity of the condition \eqref{c:deltacbr} can be found in \cite{CaS88, LS15}. 
However, as for the case $1\le p <2$,  the pointwise behavior of  the Bochner--Riesz mean of $L^p$ functions  turned out to be quite different. Not much is known beyond the classical result due to Stein \cite{St58}. 
We refer to \cite{Ta98, Ta02, LW20} and references therein for the recent results.

Via  spectral decomposition,  Bochner--Riesz means can be defined for a general positive self-adjoint operator 
which admits a spectral decomposition $L=\int_0^\infty \lambda dE_L(\lambda)$   in $L^2$, where  $dE_L$ denotes the spectral measure associated with $L$. In fact, 
the Bochner--Riesz means associated with $L$ are given by 
\[S_t^\delta(L) f= \int_0^{t} \Big(1-\frac{\lambda}{t^2}\Big)^\delta dE_L(\lambda) f.\] 
More generally, for any measurable  function $m$   the operator $m(L)$ is defined by $m(L)f=\int  m(\lambda) dE_L(\lambda) f$. 
There is a large body of literature concerning generalizations  of the Bochner--Riesz means $S_t^\delta$ to various operators $L$. See, for example,  \cite{CLSY20, CS88, GM02, GHS13, HM21, Ka94, So87, So02, SZ98, Tay89, Th98a}
and references therein.

In this paper we are concerned with  Bochner--Riesz means associated with the twisted Laplacian  $\mathcal L$ on $\C^d\cong\R^{2d}$, which 
is a self-adjoint second-order differential operator defined by
\[
\mathcal L= - \sum_{j=1}^d\bigg(\Big(\frac{\partial}{\partial x_j}-\frac i2y_j\Big)^2 + \Big(\frac{\partial}{\partial y_j}+\frac i2x_j\Big)^2\bigg), \qquad (x,y)\in \R^d\times \R^d.
\]
The twisted Laplacian $\mathcal L$ is of particular interest  in mathematical physics and quantum physics since it is a typical example of the Schr\"odinger operators with constant magnetic fields.
Also, $\mathcal L$ has a close connection to the sub-Laplacian on the Heisenberg group, which is a unique operator on the Heisenberg group which is homogeneous of degree two and invariant under the left action and rotation.  For more about those, we refer to \cite{Th98b, RS75}.

\subsection*{Bochner--Riesz means associated with $\mathcal L$.} 
Set $\N_0=\N\cup\{0\}.$ 
For $\alpha,\beta\in \N_0^d$ the special Hermite function $\Phi_{\alpha,\beta}$ is given by 
\[
\Phi_{\alpha,\beta}(z):= (2\pi)^{-\frac d2}\int_{\R^d} e^{i\inp x\xi}\Phi_\alpha\Big(\xi +\frac12 y\Big)\Phi_\beta\Big(\xi-\frac 12 y\Big)d\xi,\quad z=x+iy\in\mathbb C^d,
\]
where  $\Phi_\alpha$ denotes  the normalized Hermite functions  on $\R^d$.  
$\Phi_{\alpha,\beta}$ is an eigenfunction of $\mathcal L$ with eigenvalue $(2|\beta|+d)$, i.e., $\mathcal L\Phi_{\alpha,\beta} = (2|\beta|+d)\Phi_{\alpha,\beta}$. Here $|\beta| = \sum_i\beta_i$. 
Furthermore,  $\{ \Phi_{\alpha, \beta}\}$ forms  an orthonormal basis for $L^2(\mathbb C^d)$ and  the spectrum of $\mathcal L$ is 
$
2\N_0+d := \{2k + d : k\in \N_0\}
$ (see \cite{Th93}). 
For   $\mu\in 2\N_0+d$, let $\mathcal P_\mu$ denote the spectral projection operator defined by
\[
\mathcal P_\mu f(z) = \sum_{\beta : 2|\beta|+d = \mu} \sum_{\alpha\in \N_0^d} \inp{f}{\Phi_{\alpha,\beta}}\Phi_{\alpha,\beta}(z).
\]
The Bochner--Riesz means $S_{t}^\delta(\mathcal L)$ for the twisted Laplacian $\mathcal L$ is given by
\[
S_t^\delta(\mathcal L) f(z) = \sum_{\mu\in 2\N_0+d: \, \mu\le t^2} \Big(1-\frac{\mu}{t^2}\Big)^\delta \mathcal P_{\mu} f(z).
\]
$L^p$ convergence of $S_t^\delta(\mathcal L) f$ was studied in local and global settings by several authors \cite{Th91,Th98a, SZ98,LR22} (see, also, \cite{KR, JLR22}).
In particular, it was shown in \cite{SZ98} that $S_t^\delta(\mathcal L) f$ converges in $L^p$  for  $2(2d+1)/(2d-1)<p<\infty$ if 
\[
\delta > \delta(p,2d).
\]
The range of $\delta$ is sharp as can be seen from the transference theorem due to Kenig--Stanton--Tomas \cite{KST}. 
The range  of  $p$ for which $L^p$ convergence holds with the sharp summability index was further extended in a local setting \cite{LR22}.

In this paper,  motivated by the recent work of Chen--Duong--He--Lee--Yan \cite{CDHLY21} on  a.e. convergence of  Bochner--Riesz means for the Hermite operator 
$\mathcal H:=-\Delta+ |x|^2$,  we study  a.e. convergence of $S_t^\delta(\mathcal L) f$, that is to say, characterizing $\delta=\delta(p)$ for which  
\begin{align}\label{id:aeconv}
\lim_{t\to\infty} S_t^\delta(\mathcal L) f(z) = f(z) \quad \text{a.e.}  \quad  \forall  f\in L^p(\C^d)
\end{align}
for $2\le p\le \infty$. Compared with $L^p$ convergence of $S_t^\delta(\mathcal L) f$, its a.e. convergence has not been well studied.  It was shown only for relatively large summability indices. 
In  \cite{Th93}, Thangavelu showed that  \eqref{id:aeconv} holds for $1\le p\le \infty$ if $\delta>d-1/3$  and  
for $p>4/3$ if $\delta>d-1/2$.

The next is our first  result, which  provides a complete picture of $p$ and $\delta$ for \eqref{id:aeconv} to hold except for some endpoint cases when  $2\le p\le \infty$.

\begin{thm}\label{thm:ptconv}
    Let $2\le p\le \infty$, $\delta\ge 0$, and $d\ge 1$. If 
   \[
\delta > \delta(p,2d)/2
\]
     then \eqref{id:aeconv} holds true. 
    Conversely, \eqref{id:aeconv} fails if $\delta<\delta(p,2d)/2$.
\end{thm}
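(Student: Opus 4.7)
The plan is to establish the sufficient and necessary directions separately, adapting the classical framework of \cite{CRV88} to the spectral geometry of $\mathcal L$.

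For \textbf{sufficiency}, I would reduce a.e.\ convergence to an $L^p$ (or weak-type $L^p$) bound on the maximal operator
\[
S^\delta_\ast(\mathcal L)f(z):=\sup_{t>0}|S^\delta_t(\mathcal L)f(z)|,
\]
using the density in $L^p(\mathbb C^d)$ ($p<\infty$) of finite linear combinations of the special Hermite functions $\Phi_{\alpha,\beta}$, on which a.e.\ convergence is trivial; the $p=\infty$ case follows from large finite $p$ via local truncation. To prove the maximal bound for $\delta>\delta(p,2d)/2$, I would dyadically decompose the spectrum of $\mathcal L$ and apply the Sobolev-type embedding
\[
\sup_{t\in[T,2T]}|F(t)|^2\lesssim T^{-1}\!\int_T^{2T}\!|F|^2\,dt+\Big(\int_T^{2T}\!|F|^2\,dt\Big)^{1/2}\Big(\int_T^{2T}\!|F'(t)|^2\,dt\Big)^{1/2}
\]
to $F(t)=S^\delta_t(\mathcal L)f$ scale by scale. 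This reduces the maximal bound to square function estimates of the form $\big\|\big(\int_0^\infty|m_t(\mathcal L)f|^2\,dt/t\big)^{1/2}\big\|_p\lesssim\|f\|_p$ for suitable auxiliary multipliers $m_t$, which in turn can be controlled via the sharp $L^p$ spectral multiplier bounds for $\mathcal L$ established in \cite{SZ98,LR22,JLR22}. The halving $\delta(p,2d)/2$ (as compared with the $L^p$-convergence threshold $\delta(p,2d)$) reflects the spectral degeneracy of $\mathcal L$: the eigenspaces $\mathcal P_\mu L^2$ are infinite dimensional, so the effective dimension governing the multiplier theory is closer to $d$ than to $2d$.

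For \textbf{sharpness}, I would produce explicit counterexamples. Using the Laguerre-function representation of the projection kernel of $\mathcal P_\mu$, for each $\mu=2k+d$ one can exhibit a Knapp-type extremizer $g_\mu\in\mathcal P_\mu L^2(\mathbb C^d)$ with $\|g_\mu\|_p/\|g_\mu\|_\infty\sim \mu^{-\delta(p,2d)/2}$ (up to logarithmic factors). A lacunary superposition $f=\sum_j a_j g_{\mu_j}$ with suitably sparse $\{\mu_j\}$ and coefficients chosen so $f\in L^p$ but of maximal size produces non-summable jumps of $S^\delta_t(\mathcal L)f$ as $t^2$ crosses each $\mu_j$, precluding a.e.\ convergence. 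This is essentially the Stein maximal-principle converse applied to the sharp single-scale estimate.

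The main obstacle is the sufficiency, specifically the proof of the square function estimate at (or arbitrarily close to) the sharp index $\delta(p,2d)/2$. Pushing the Sobolev--square function framework of \cite{CRV88} down to this halved threshold requires fully capitalizing on the spectral degeneracy of $\mathcal L$ and careful scale-uniform tracking of constants in the multiplier theorem for $\mathcal L$, and may further require establishing a weighted $L^2$ bound analogous to the one used by Carbery--Rubio de Francia--Vega. The endpoint case $\delta=\delta(p,2d)/2$ is presumably outside the reach of this method, consistent with the strict inequalities in the statement of the theorem.
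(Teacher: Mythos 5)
Your necessity plan is in the right spirit (an explicit lacunary superposition of spectrally localized functions, rather than Nikishin--Maurey), but your sufficiency plan has a genuine gap. You propose to reduce the maximal bound to a square function estimate and then control that square function ``via the sharp $L^p$ spectral multiplier bounds for $\mathcal L$ established in \cite{SZ98,LR22,JLR22}.'' Those multiplier theorems are calibrated to the $L^p$-convergence threshold $\delta>\delta(p,2d)$, and feeding them into a Sobolev--square function scheme cannot, by themselves, produce the halved index $\delta(p,2d)/2$. Your heuristic explanation of the halving is also off: if the ``effective dimension'' were $d$ rather than $2d$, the threshold would be $\delta(p,d)=\max(0,d(1/2-1/p)-1/2)$, which is not the same as $\delta(p,2d)/2=\max(0,d(1/2-1/p)-1/4)$. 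The actual source of the gain in the paper is a trace-type estimate (Lemma \ref{lem:trace}), $\int_{\mathbb B_d(M)}|\mathcal P_\mu f|^2\lesssim M\mu^{-1/2}\|f\|_2^2$, giving a factor $\mu^{-1/2}$ when the spectral projection is tested against compactly supported weights; this only becomes available inside the weighted $L^2$ framework.

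In the paper, the weighted $L^2$ maximal estimate (Theorem \ref{thm:maxi}) is the central object, not an optional refinement as you suggest at the end. The sufficiency of Theorem \ref{thm:ptconv} is obtained from it via the embedding $L^p(\C^d)\hookrightarrow L^2(\C^d,\Psi_\alpha)$ for $\alpha>2d(1-2/p)$, and the key point making the estimate work without an $A_2$ restriction on $\alpha$ (hence covering $p=\infty$ directly) is the kernel localization in Lemma \ref{lem:etapt}, which replaces Littlewood--Paley/$A_p$ arguments. Your $p=\infty$ reduction ``via local truncation'' from large finite $p$ is not spelled out and is precisely the step the paper circumvents by avoiding the $A_2$ restriction. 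On the necessity side, the extremizer in the paper is not a Knapp-type cap inside an eigenspace but $g_k=\phi_*^\vee(\mathcal L-\mu_k)(0,\cdot)$, which lives on the shell $|z|^2\sim\mu_k$ and is driven back to unit scale by $\mathcal P_{\mu_k}$; the crucial mechanism is the transference inequality $|\mathcal P_{\mu_k}f|\lesssim \mu_k^\delta S_*^\delta(\mathcal L)f$ together with the Laguerre asymptotics for the kernel of $\mathcal P_{\mu_k}$, not a Stein maximal principle argument. You would need to make that inequality, and the off-diagonal control on $\mathcal P_{\mu_k}f_j$ for $j\ne k$, explicit before the lacunary superposition argument can close.
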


It should be pointed out that the critical summability index for a.e. convergence is only half of that for $L^p$ convergence. 
A similar result was  obtained in \cite{CDHLY21} for the Bochner--Riesz means $S_t^\delta(\mathcal H)f$
associated with  the Hermite operator. 
In fact, for $2\le p < \infty$  it was shown that 
$S_t^\delta(\mathcal H)f$ converges to $f$ a.e. as $t\to \infty$   for all $f\in L^p(\R^d)$ provided that
 $\delta > \delta(p,d)/2$. 
As to be discussed later, this kind of improvement of summability index is related to the facts that  $\mathcal L$  and $\mathcal H$ have discrete spectrums  bounded away from the zero and
the kernels of  the multiplier operators $\eta((\mu-\mathcal L)/R)$ and $\eta((\mu-\mathcal H)/R)$ are essentially supported  near the diagonal $\{(z,z')\in\C^d\times \C^d : z=z'\}$ 
(see Lemma \ref{lem:etapt} and \ref{lem:etapth}). 

Theorem \ref{thm:ptconv} includes the case $p=\infty$, which was not covered in the previous works (\cite{CRV88, CDHLY21}). 
In particular, this is possible because our approach does not rely on the fact that the weight  $\Psi_\alpha$ is in $A_2$ class (see  the discussion below Theorem \ref{thm:maxi} for more detail). 
In \cite{CDHLY21},  the sharpness of summability index was shown by making use of the Nikishin--Maurey theorem. 
However,  we verify  the necessity part of  Theorem \ref{thm:ptconv}  by directly constructing $L^p$ functions for which a.e. convergence fails if $\delta<\delta(p,2d)/2$. 
More precisely,  for $4d/(2d-1)< p\le \infty$, we shall show that there exists a function $f\in L^p(\C^{d})$ such that
\begin{equation}\label{eq:mea} 
|\{z\in\C^{d} : \sup_{t > 0}|S_{t}^\delta(\mathcal L) f(z)| = \infty\}|\gtrsim 1
\end{equation}
if $\delta<\delta(p,2d)/2$.    
See Section \ref{sec:nece} for the detail. In particular, this enables us to show  
sharpness of summability index for  $f\in L^\infty$, which is not allowed when  using the Nikishin--Maurey theorem.

\subsection*{Maximal estimate on a weighted $L^2$ space}
To prove the sufficiency part of Theorem \ref{thm:ptconv}, we consider the maximal Bochner--Riesz operator
\[
S_*^\delta(\mathcal L) f(z) = \sup_{t>0} |S_t^\delta(\mathcal L)f(z)|. 
\]
$L^p$ boundedness of the maximal operator $f\to \sup_{t>0} |S_t^\delta f|$  of the classical Bochner--Riesz means  has been studied  to show a.e.   
convergence of $S_t^\delta f$ 
(see \cite{ St58, Ca83, Le04, LRS12, Lee18, gjw, gow} and references therein).  Rather than showing $L^p$  boundedness of  $S_*^\delta(\mathcal L)$, we take an approach introduced in \cite{CRV88}  (\cite{An17, GM02, HM21, LS15}) which relies on a weighted $L^2$ estimate. 
For the purpose we consider a weight  function
\[
\textstyle \Psi_\alpha(z) = \sum_{j\ge 0} 2^{-\alpha j}\chi_{\mathbb A_j}(z),   \quad \alpha\in \mathbb R, 
\]
where 
$
\mathbb A_j= \{z\in \C^{d} : 2^{j-1}<|z|\le 2^j\}
$
for $j\ge 1$ and $\mathbb A_0 = \{z\in \C^{d} : |z|\le 1\}$. Note that $\Psi_\alpha(z)\sim (1+|z|)^{-\alpha}$. 
Theorem \ref{thm:ptconv} is a consequence of the following.

\begin{thm}\label{thm:maxi}
Let $\alpha \ge  0$. If $\delta>\max\{(\alpha-1)/4, 0\}$, then we have
\begin{align}\label{e:maxi}
\|S_*^\delta(\mathcal L) f \|_{L^2(\C^d, \Psi_\alpha)} \le C \|f\|_{L^2(\C^d, \Psi_\alpha)}
\end{align}
for a constant $C>0$.
\end{thm}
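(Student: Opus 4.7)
The plan is to follow the Carbery--Rubio de Francia--Vega \cite{CRV88} framework for maximal Bochner--Riesz operators, carried over to the twisted Laplacian via the near-diagonal kernel localization of Lemma \ref{lem:etapt}. First I decompose the multiplier dyadically at the critical boundary $\lambda = t^2$: pick $\phi\in C_c^\infty([1/2,2])$ with $\sum_{j\ge 0}\phi(2^j u)=1$ on $(0,1]$, and set $m_j(\lambda,t)=\phi(2^j(1-\lambda/t^2))(1-\lambda/t^2)^\delta$, $T_t^j := m_j(\mathcal L,t)$, so that $S_t^\delta(\mathcal L) = \sum_j T_t^j$. The piece $T_t^j$ is spectrally supported where $\mathcal L\sim t^2$ on an interval of width $R := t^2\cdot 2^{-j}$, and $\|m_j(\cdot,t)\|_\infty\lesssim 2^{-j\delta}$. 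It suffices to bound $\|\sup_t|T_t^j f|\|_{L^2(\Psi_\alpha)}$ with a summable-in-$j$ decay factor when $\delta>(\alpha-1)/4$.

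To dominate the supremum, the plan is to invoke the Kolmogorov--Seliverstov--Plessner identity: since $s\mapsto T_s^j f(z)$ is $C^1$ and vanishes for small $s$, $|g(t)|^2 = 2\re\int_0^t g\bar g'\,ds$ together with Cauchy--Schwarz in $s$ (weights $s^{-1}$ and $s$) gives
\[
\sup_t|T_t^j f(z)|^2 \le 2\Bigl(\int_0^\infty s^{-1}|T_s^j f(z)|^2\,ds\Bigr)^{\!1/2}\!\Bigl(\int_0^\infty s|\partial_s T_s^j f(z)|^2\,ds\Bigr)^{\!1/2}.
\]
Integrating against $\Psi_\alpha\,dz$ and Cauchy--Schwarz in $z$ reduce matters to bounding the two weighted time-integrals
\[
\mathcal I_j := \int_0^\infty\! s^{-1}\|T_s^j f\|_{L^2(\Psi_\alpha)}^2\,ds, \quad \mathcal I_j' := \int_0^\infty\! s\|\partial_s T_s^j f\|_{L^2(\Psi_\alpha)}^2\,ds.
\]

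The crux is the weighted single-scale estimate. On the unweighted side, Plancherel orthogonality of the $\mathcal P_\mu$'s together with the fact that each $\mu$ lies in the support of $m_j(\mu,\cdot)$ only for $s$ in an interval of length $\sim\sqrt\mu\cdot 2^{-j}$ gives $\int_0^\infty s^{-1}\|T_s^j g\|_{L^2(\C^d)}^2\,ds\lesssim 2^{-j(2\delta+1)}\|g\|_{L^2}^2$ and $\int_0^\infty s\|\partial_s T_s^j g\|_{L^2(\C^d)}^2\,ds\lesssim 2^{-j(2\delta-1)}\|g\|_{L^2}^2$. To pass to $\Psi_\alpha$, I use Lemma \ref{lem:etapt}: writing $T_s^j = 2^{-j\delta}\tilde\eta_j((\mathcal L-s^2)/R)$ with $R\ge 1$ (the case $R<1$ involves only a single spectral projector $\mathcal P_\mu$ and is handled directly), the kernel of $\tilde\eta_j((\mathcal L-s^2)/R)$ is concentrated near the diagonal. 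Decomposing $f = \sum_k f\chi_{\mathbb A_k}$ on the annuli (where $\Psi_\alpha\sim 2^{-\alpha k}$), applying the unweighted estimate to each piece, and summing with weights while accounting for the interaction between the localization scale from Lemma \ref{lem:etapt} and the annular decomposition, yields
\[
\mathcal I_j\lesssim 2^{-j(2\delta+1-(\alpha-1)/2)}\|f\|_{L^2(\Psi_\alpha)}^2,\quad \mathcal I_j'\lesssim 2^{-j(2\delta-1-(\alpha-1)/2)}\|f\|_{L^2(\Psi_\alpha)}^2.
\]
Cauchy--Schwarz then gives
\[
\|\sup_t|T_t^j f|\|_{L^2(\Psi_\alpha)}\lesssim 2^{-j(\delta-(\alpha-1)/4)}\|f\|_{L^2(\Psi_\alpha)},
\]
which sums in $j$ precisely when $\delta>(\alpha-1)/4$ (trivially for any $\delta>0$ when $\alpha\le 1$).

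The principal technical obstacle is extracting the precise $(\alpha-1)/2$ loss per integral in the weighted single-scale step. Lemma \ref{lem:etapt} must be used both spatially, to compare $\Psi_\alpha(z)$ with $\Psi_\alpha(z')$ on the near-diagonal support of the kernel, and spectrally, to propagate Plancherel orthogonality of the $\mathcal P_\mu$'s through the $\Psi_\alpha$-weighted norm. The critical exponent $(\alpha-1)/4$ — half the classical CRV value $(\alpha-1)/2$ — encodes the quadratic spectral-to-frequency correspondence $\mu\sim t^2$ for $\mathcal L$, and faithfully tracking this factor of two is the main technical content of the proof.
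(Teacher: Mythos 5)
Your high-level plan — dyadic decomposition of the multiplier near the spectral boundary $\lambda = t^2$, reduction of the supremum to square-function quantities, and a weighted single-scale estimate fed by the near-diagonal kernel localization of Lemma \ref{lem:etapt} — is in the right spirit. (The paper controls the supremum via the Stein–Weiss type inequality \eqref{i:s*} rather than the Kolmogorov–Seliverstov–Plessner identity, but both devices are standard and either would serve.) The problem is that the step you label ``the crux'' — obtaining
\[
\mathcal I_j\lesssim 2^{-j(2\delta+1-(\alpha-1)/2)}\|f\|_{L^2(\Psi_\alpha)}^2
\]
and its primed companion — is asserted, not proved, and the single mechanism you name (Lemma \ref{lem:etapt} plus an annular split of $f$) cannot produce the stated exponent.

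Concretely, decomposing $f=\sum_k f\chi_{\mathbb A_k}$ and using near-diagonal kernel localization lets you compare $\Psi_\alpha(z)$ with $\Psi_\alpha(z')$ on the support of the kernel; whatever gain this buys comes from the decay scale, which by Lemma \ref{lem:etapt} varies across three genuinely different regimes ($R\ge\mu$, $1\le R<\mu$, $R<1$, i.e.\ $t\gtrsim 2^{j}$, $2^{j/2}\lesssim t\lesssim 2^{j}$, $t\lesssim 2^{j/2}$). You never address this case split; in the middle regime the localization scale is $\sim 2^{j/2}$, so the weight loss from comparing $\Psi_\alpha(z)$ and $\Psi_\alpha(z')$ is $\sim 2^{\alpha j/2}$, not $2^{(\alpha-1)j/2}$. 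The missing ``$-1$'' is supplied, in the paper, by the trace lemma for $\mathcal L$ — Lemma \ref{lem:trace} and its sharpened form Corollary \ref{lem:trace1} — which gives the crucial gain $\int_{\mathbb B_d(M)}|\omega(\mathcal L)f|^2\,dz\lesssim \sigma M\mu^{-1/2}\|\omega\|_\infty^2\|f\|_2^2$ and is fed into the weighted sum over annuli to turn $2^{-\alpha k}$ into $2^{-(\alpha-1)k}$. This ingredient appears nowhere in your sketch, and it is not recoverable from the pointwise kernel bounds of Lemma \ref{lem:etapt} alone (which only give the $L^1\!\to L^\infty$ and $L^2\!\to L^2$ scales, not the restricted $L^2$ improvement over balls).

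A second point: you describe your plan as ``following the Carbery–Rubio de Francia–Vega framework,'' but the CRV argument (and its Hermite adaptation in \cite{CDHLY21}) leans on Littlewood–Paley theory and the $A_2$ property of the weight, which inherently restricts the exponent to $\alpha<d$; the present theorem has no such ceiling, and avoiding $A_2$/Littlewood–Paley is precisely the novelty the paper emphasizes. As written, your proposal neither runs the $A_2$ machinery (which wouldn't give the full range of $\alpha$) nor the paper's trace-lemma argument (which does), so the key weighted estimate is unsupported. To repair the argument you would need to (i) introduce and use Corollary \ref{lem:trace1} as the source of the $\mu^{-1/2}$ gain, and (ii) carry out the three-regime (low/mid/high frequency) analysis of Section \ref{sec:sq}, paying attention to the different kernel localization scales in each regime.
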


Theorem \ref{thm:maxi}  is sharp in that \eqref{e:maxi} fails if $\delta< (\alpha-1)/4$ (see Remark \ref{l2w} below). By a standard argument (see  a discussion below Corollary \ref{thm:aepoly}), the sufficiency  part of Theorem \ref{thm:ptconv} follows from  Theorem \ref{thm:maxi}. 
In the previous works (\cite{CRV88, CDHLY21}), Littlewood--Paley inequality and the fact that  the weights are contained in $A_2$-class  played a role in proving the weighted $L^2$ inequality. This in turn results in  imposing a bound on the growth order of the weights, that  is to say, $\alpha<d$. However,   our result continues to be valid  without an upper bound on $\alpha$.
This allows us to extend Theorem \ref{thm:ptconv} to a class of functions which have growth at infinity. 

\begin{coro}\label{thm:aepoly}
Let  $d\ge 1$ and $\beta\ge 0$. Set
$ \gamma(p,d,\beta)=\max(0, \beta+d(\frac12-\frac1p)-\frac12)$.
 If $\delta > \gamma(p, 2d,\beta)/2$, then  
$
\lim_{t\to\infty} S_t^\delta(\mathcal L) f= f$ {a.e.} 
 whenever $\Psi_\beta f \in L^p(\C^d)$. Conversely, if \eqref{id:aeconv} holds for all $f$ satisfying $ \Psi_\beta f\in L^p(\C^d)$ for some $p\in (4d/(2d-1+2\beta),  \infty]$, then $\delta \ge \gamma(p, 2d,\beta)/2$.
\end{coro}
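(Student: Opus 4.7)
My plan is to derive both halves of the corollary from Theorems~\ref{thm:maxi} and~\ref{thm:ptconv} through a H\"older-type weight-shifting device.

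\emph{Sufficiency.} Fix $f$ with $\Psi_\beta f\in L^p(\C^d)$ and $\delta>\gamma(p,2d,\beta)/2$. Starting from the pointwise identity $|f|^2\Psi_\alpha=(\Psi_\beta|f|)^2\,\Psi_{\alpha-2\beta}$ and applying H\"older with exponents $(p/2,p/(p-2))$ gives
\[
\|f\|_{L^2(\Psi_\alpha)}^2\;\le\;\|\Psi_\beta f\|_{L^p}^2\,\|\Psi_{\alpha-2\beta}\|_{L^{p/(p-2)}},
\]
and the last factor is finite precisely when $\alpha>2\beta+2d-4d/p=1+2\gamma(p,2d,\beta)$ (in the regime $\gamma>0$). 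Thus the hypothesis $\delta>\gamma/2$ permits a choice of $\alpha$ satisfying both the above and $\delta>\max\{(\alpha-1)/4,0\}$, after which Theorem~\ref{thm:maxi} yields $\|S_*^\delta(\mathcal L)f\|_{L^2(\Psi_\alpha)}\lesssim\|\Psi_\beta f\|_{L^p}$. For $p<\infty$, Schwartz functions are dense in the space $\{h:\Psi_\beta h\in L^p\}$ (write $h=(1+|z|^2)^{\beta/2}g$ and approximate $g$ by Schwartz functions in $L^p$), and on Schwartz $g$ the rapid decay of $\mathcal P_\mu g$ in $\mu$ forces $S_t^\delta(\mathcal L)g\to g$ pointwise; the standard weighted Chebyshev / limsup argument then promotes this to a.e.\ convergence for general $f$. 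For $p=\infty$, where Schwartz approximation fails, I would truncate $f=f\chi_{|z|\le R}+f\chi_{|z|>R}$: the first piece is bounded and compactly supported, hence handled by Theorem~\ref{thm:ptconv} applied at some large $p'$ (since $\delta>\gamma(\infty,2d,\beta)/2\ge\delta(p',2d)/2$ for $p'$ large), while $\|f\chi_{|z|>R}\|_{L^2(\Psi_\alpha)}\to 0$ as $R\to\infty$ for $\alpha>2\beta+2d$, so Theorem~\ref{thm:maxi} drives the tail's maximal contribution to $0$ a.e.\ through a diagonal argument.

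\emph{Necessity.} Given $\delta<\gamma(p,2d,\beta)/2$, I seek $f$ with $\Psi_\beta f\in L^p$ whose Bochner--Riesz maximal function is infinite on a set of positive measure. The idea dual to the sufficiency is that H\"older's inequality embeds $L^q\hookrightarrow L^p(\Psi_\beta)$ whenever $1/q>1/p-\beta/(2d)$, while a direct computation gives $\delta(q,2d)\nearrow\gamma(p,2d,\beta)$ as $q\nearrow q_0:=(1/p-\beta/(2d))^{-1}$. The range hypothesis $p>4d/(2d-1+2\beta)$ translates into $q_0>4d/(2d-1)$, so I can pick $q$ slightly below $q_0$ with $q>4d/(2d-1)$ and $\delta<\delta(q,2d)/2$; the counterexample $f\in L^q$ produced by \eqref{eq:mea} then automatically lies in $L^p(\Psi_\beta)$, yielding the required a.e.\ divergence.

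The main obstacle lies in the regime $p>2d/\beta$ (in particular $p=\infty$): here $q_0$ leaves $(0,\infty]$, the H\"older embedding degenerates, and this dual trick recovers only the non-sharp bound $\delta\ge\delta(\infty,2d)/2<\gamma(p,2d,\beta)/2$. To close the gap I would adapt the construction of Section~\ref{sec:nece} by decorating the concentrating Knapp-type test function with a power-type envelope of size $(1+|z|)^\beta$ aligned with the relevant spectral scale; the technical crux is arranging this envelope so that $\Psi_\beta f$ remains in $L^p$ while the pointwise divergence computation survives, after which the remainder of the necessity proof is essentially mechanical.
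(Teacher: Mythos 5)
Your sufficiency argument is essentially the paper's: the embedding $L^p(\C^d)\hookrightarrow L^2(\C^d,\Psi_\alpha)$ for $\alpha>2d(1-2/p)$ (with the weight shift by $\Psi_\beta$) is exactly what the paper invokes, and your H\"older computation pinning down the admissible window $1+2\gamma(p,2d,\beta)<\alpha<4\delta+1$ is correct. The extra machinery you deploy at $p=\infty$ is unnecessary, though: the H\"older inequality survives the limit $p\to\infty$ (one just uses $\|f\|_{L^2(\Psi_\alpha)}^2\le\|\Psi_\beta f\|_{L^\infty}^2\|\Psi_{\alpha-2\beta}\|_{L^1}$, finite for $\alpha>2\beta+2d=1+2\gamma(\infty,2d,\beta)$), and the density needed to run the maximal/limsup argument is density of nice functions in $L^2(\Psi_\alpha)$, which holds regardless of $p$; no truncation or diagonal argument is required.

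The necessity argument has a genuine gap, and you have correctly located it. Your H\"older reduction $L^q\hookrightarrow L^p(\Psi_\beta)$ pushing the problem down to $\beta=0$ and applying \eqref{eq:mea} is a legitimate alternative route, but it only operates when $q_0=(1/p-\beta/(2d))^{-1}$ is finite, i.e.\ when $p<2d/\beta$. For $p\ge 2d/\beta$ (in particular $p=\infty$ with $\beta>0$) the reduction caps $\delta$ at $\delta(\infty,2d)/2<\gamma(p,2d,\beta)/2$, leaving the claimed sharpness unproved. Your proposed fix — decorating the test function with a power-type envelope of size $(1+|z|)^\beta$ — is not worked out and also does not match what actually closes the gap. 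The paper's Proposition~\ref{prop:nece} handles the whole range $p>4d/(2d-1+2\beta)$ by a much simpler device: the underlying test functions $g_k=\phi_*^\vee(\mathcal L-\mu_k)(0,\cdot)$ are left unchanged, and all of the $\beta$-dependence enters only through the normalization $f_k=g_k/\|\Psi_\beta g_k\|_{L^p}$, using $\|\Psi_\beta g_k\|_{L^p}\sim\mu_k^{d/p-1/2-\beta/2}$. No modification of the oscillatory profile of $g_k$ is needed; the concentration on $|z|^2\sim\mu_k$ already interacts with the power weight in the right way. You should replace the heuristic envelope step with this renormalization and then check that the lower bound \eqref{c_lower} and the off-diagonal decay \eqref{c_upper} go through with the $\beta$-dependent exponents, as in Lemma~\ref{lem:const}.
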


The sufficiency part of the corollary  is a simple consequence of the embedding $L^p(\C^d)\hookrightarrow L^2(\C^d, \Psi_\alpha)$  when $\alpha>  2d(1- 2/p)$. 
In particular, note that  $\alpha-1  >  2\delta(p, 2d)$ for $p\ge 4d/(2d-1)$.

\subsubsection*{Our approach} 
As in \cite{CDHLY21}, we exploit the special spectral properties of the twisted Laplacian, that is to say, a generalized  trace lemma (Lemma \ref{lem:trace}) and the fact that 
the discrete spectrum of $\mathcal L$ is bounded away from the origin.  However, there are significant differences between the problems for $\mathcal H$ and $\mathcal L$. For example,  a crucial inequality  which relates the weight function and $\mathcal H$ (\cite[Lemma 1.4]{CDHLY21}) does not generally hold for $\mathcal L$.    Instead of following   \cite{CDHLY21}, we devise a simpler and more direct approach which relies on estimates for the kernel of associated spectral multipliers (Lemma \ref{lem:etapt}). Most of all, our approach  does not rely  on the Littlewood--Paley  and  $A_p$ weight theories. 
(See Section \ref{sec:max} and \ref{sec:sq}.)  Besides,  we do not need to rely on finite speed of propagation of the associated  wave operator  $\cos(t \mathcal L)$,  which  was extensively used to exploit a localization property of Bochner--Riesz operator since  the estimates for the kernel replace the role of finite speed of propagation.  Our approach also works for the Hermite Bochner--Riesz means, so it provides a simpler proof of the previous result in \cite{CDHLY21} (see Section \ref{sec:rem}).

\subsection*{Organization}
In Section \ref{sec:basic}, we prove two basic estimates which are to be used as main tools to prove Theorem \ref{thm:maxi}.
Section \ref{sec:max} and \ref{sec:sq} are devoted to the proof of Theorem \ref{thm:maxi}.
In Section \ref{sec:max}, we reduce the matters to obtaining a square function estimate with weights, which  we show  in Section \ref{sec:sq}.
In Section \ref{sec:nece}, we prove the necessity parts of  Theorem \ref{thm:ptconv} and  \ref{thm:aepoly}.
Finally, in Section \ref{sec:rem}, we make some remarks on a.e. convergence of the Hermite Bochner--Riesz means.

\subsection*{Notation}
For given positive numbers $A, B$,  $A\lesssim B$ means  $A\le C B$  for a constant $C>0$ depending only on $d$.
If the constant $C$ can be taken to be a number small enough, we use the notation $A\ll B$. 
Besides,  by $A\sim B$  we mean that  $A\lesssim B$ and $A\gtrsim B$. For $z\in \C^{d}$ and $M>0$,  $\mathbb B_d(z,M)$ denotes the  $2d$-dimensional ball in $ \C^{d}$ which is centered at $z$ and of radius $M$. For simplicity, we denote $\mathbb B_d(M)=\mathbb B_d(0,M)$.

\section{Preliminaries}\label{sec:basic}
In this section we obtain some estimates,  which we use to prove the main results.

\subsection{A local $L^2$  estimate for the spectral projection operator $\CP_\mu$}
We begin with an $L^2$ estimate for $\CP_\mu$ over balls centered at the origin.

\begin{lemma}\label{lem:trace}
 Let  $\mu\in 2\N_0+d.$ Then, there is a constant  $C$, independent of $M\ge1$ and $\mu$, such that
\begin{equation}\label{e:trace}
\int_{\mathbb B_d(M)} |\CP_\mu f(z)|^2 dz \le C M\mu^{-1/2} \|f\|_2^2.
\end{equation}
\end{lemma}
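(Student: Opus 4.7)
The plan is to reduce the local $L^2$ estimate for $\mathcal P_\mu f$ to a sharp Stein--Tomas-type restriction bound for $\mathcal P_\mu$ via H\"older's inequality. Setting $q = 4d/(2d-1)$ so that $1 - 2/q = 1/(2d)$, and noting $|\mathbb B_d(M)| \lesssim M^{2d}$, H\"older's inequality gives
\[
\int_{\mathbb B_d(M)} |\mathcal P_\mu f(z)|^2 \, dz \,\le\, |\mathbb B_d(M)|^{1/(2d)} \,\|\mathcal P_\mu f\|_{L^q(\mathbb C^d)}^{2} \,\lesssim\, M \,\|\mathcal P_\mu f\|_{L^q(\mathbb C^d)}^{2}.
\]
Hence the lemma will follow once we establish the restriction-type inequality
\[
\|\mathcal P_\mu f\|_{L^{4d/(2d-1)}(\mathbb C^d)} \,\le\, C \mu^{-1/4} \|f\|_{L^2(\mathbb C^d)}. \tag{$\ast$}
\]

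To prove $(\ast)$, I would use the Mehler--Laguerre formula for the integral kernel of $\mathcal P_\mu$,
\[
K_\mu(z,w) \,=\, (2\pi)^{-d} L_k^{d-1}\!\bigl(\tfrac{1}{2}|z-w|^2\bigr) \,e^{-|z-w|^2/4} \,e^{\tfrac{i}{2}\operatorname{Im}(z\bar w)}, \qquad k = (\mu - d)/2,
\]
which exhibits $\mathcal P_\mu$ as twisted convolution with the Laguerre function $\varphi_k^{d-1}(z) = (2\pi)^{-d} L_k^{d-1}(|z|^2/2) e^{-|z|^2/4}$. A $T T^{*}$ argument then reduces $(\ast)$ to an $L^{p'}\to L^{p}$ bound for $\mathcal P_\mu$ with $p = 4d/(2d-1)$, equivalently an $L^p$-improving inequality for twisted convolution by $\varphi_k^{d-1}$. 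The sharp decay $\mu^{-1/4}$ comes from uniform asymptotics of the Laguerre function together with a stationary-phase analysis of the oscillatory twisted-phase factor $e^{\tfrac{i}{2}\operatorname{Im}(z\bar w)}$; analogous arguments appear in Thangavelu \cite{Th93} and Stempak--Zienkiewicz \cite{SZ98}.

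The principal obstacle is the restriction estimate $(\ast)$. The exponent $q = 4d/(2d-1)$ lies strictly below the classical Stein--Tomas endpoint $2(2d+1)/(2d-1)$, so $(\ast)$ cannot be obtained by interpolating between the trivial $L^2\to L^2$ bound and a Stein--Tomas endpoint estimate. Moreover, a direct Schur-test bound on the truncated kernel $\chi_{\mathbb B_d(M)} K_\mu \chi_{\mathbb B_d(M)}$, which discards the oscillation of the twisted phase and retains only the absolute value of the Laguerre envelope, yields a decay rate strictly weaker than the claimed $M\mu^{-1/2}$. Hence the crucial step is to exploit the cancellation in the twisted phase---which is precisely what $(\ast)$ encapsulates---rather than relying on pointwise size bounds for $|K_\mu|$ alone.
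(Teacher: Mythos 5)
Your H\"older reduction is arithmetically correct as an implication: with $q = 4d/(2d-1)$ one has $1 - 2/q = 1/(2d)$, so $|\mathbb B_d(M)|^{1-2/q}\sim M$, and $(\ast)$ would indeed give the lemma. The difficulty is that $(\ast)$ is false. Test it against $f = \mathcal P_\mu(\cdot, 0)$, which lies in the range of $\mathcal P_\mu$, so $\mathcal P_\mu f = f$; note also that the twisted phase $e^{\frac i2\langle 0, \mathbf S z\rangle}$ is trivial for this choice, so no oscillatory gain is available. From \eqref{kernel_Pk} and the asymptotics \eqref{eq:asympt} one finds, on average over dyadic shells $1\lesssim |z|\lesssim \mu^{1/2}$, that $|\mathcal P_\mu(z,0)| \gtrsim \mu^{(2d-3)/4}|z|^{-(2d-1)/2}\,|\cos\phi(|z|)|$ with a phase $\phi$ that oscillates on every dyadic scale. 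Hence $\|f\|_2^2 = \mathcal P_\mu(0,0)\sim\mu^{d-1}$, while
\begin{equation*}
\|f\|_q^q \;\gtrsim\; \mu^{\frac{(2d-3)q}{4}}\int_1^{\mu^{1/2}} r^{-\frac{(2d-1)q}{2}}\, r^{2d-1}\, dr ,
\end{equation*}
and for $q = 4d/(2d-1)$ the exponent of $r$ is exactly $-1$, so the integral produces a factor $\log\mu$. This yields $\|\mathcal P_\mu\|_{L^2\to L^q}\gtrsim \mu^{-1/4}(\log\mu)^{(2d-1)/(4d)}$, contradicting $(\ast)$. This is not incidental: $q = 4d/(2d-1)$ is precisely the exponent at which the global $L^q$-norm of the projection kernel first diverges, so the endpoint you need is exactly where the global restriction bound breaks.

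The loss is introduced when you replace $\|\chi_{\mathbb B_d(M)}\mathcal P_\mu f\|_q$ by the global $\|\mathcal P_\mu f\|_q$. The lemma is genuinely local (it is only informative for $M\lesssim \mu^{1/2}$), and discarding the indicator throws away the localization that keeps the estimate logarithm-free. The paper's proof never passes to a global $L^q$ norm. Instead it exploits the product structure $\mathbb B_d(M)\subset\mathbb B_1(M)\times\mathbb C^{d-1}$, uses full orthogonality of the one-dimensional special Hermite factors $\Phi_{\sigma_l,\beta_l}$ over $\mathbb C$ in the unlocalized coordinates and a residual angular orthogonality over $\mathbb B_1(M)$ (equation \eqref{orth}), and then reduces everything to the one-dimensional Laguerre size estimate $\int_{\mathbb B_1(M)}|\Phi_{\sigma_1,\beta_1}|^2\lesssim M\mu^{-1/2}$, which follows directly from Lemma \ref{decay}. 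So the ``cancellation'' being used is orthogonality of the eigenfunctions, not stationary phase in the twisted factor. To salvage your route you would either need a localized $L^2\to L^q(\mathbb B_d(M))$ bound in place of the global $(\ast)$, or accept the logarithmic loss and track it through the rest of the paper; as written, the proposal has a genuine gap.
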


By using dyadic decomposition and Lemma \ref{lem:trace}  one can easily obtain the estimate
\begin{equation}\label{trace}
\int_{\C^{d}} |\CP_\mu f(z)|^2 
\Psi_\alpha(z)
dz \le C \mu^{-1/2} \|f\|_2^2 
\end{equation}
for a constant $C>0$ provided that  $\alpha>1$.  The estimate \eqref{trace} can be regarded as a trace lemma for $\mathcal L$ (cf.  
\cite[Lemma 1.5]{CDHLY21}).

To prove Lemma \ref{lem:trace} we modify  the argument in \cite{CDHLY21} to prove a trace lemma for $\mathcal H$.
We make use of the following two lemmas. 
Let  $\CL^a_k$ denote the normalized Laguerre function  of type $a$ which is given by
\[\CL^a_k (r^2/2) =\big({k!}/{(k+a)!}\big)^{1/2} \big({r^2}/2\big)^{a/2}L^a_k (r^2/2) e^{-r^2/4},\]
where  the Laguerre polynomial of type $ L^a_k$ is  defined by
\[ k! e^{-r} r^a L^a_k (r) =  ({d}/{dr)^k} (e^{-r }r^{k+a}). \]

\begin{lemma}[{\cite[Theorem 1.3.5]{Th93}}] \label{lagu} Let $\Phi_{a,b}$ denote a $1$-dimensional special Hermite function, $a,b \in \N_0$. For $z\in \C$,
we have
\[
\Phi_{a,b} (z) =
\begin{cases}
(2\pi )^{-1/2}  \big(\frac{-iz}{ |z|}\big)^{b-a} \CL^{b-a}_a (|z|^2/2),& \  a\le b,\\
(2\pi )^{-1/2} \big(\frac{i\bar z}{ |z|}\big)^{a-b} \CL^{a-b}_b (|z|^2/2),& \  a>b. 
\end{cases}
\]
\end{lemma}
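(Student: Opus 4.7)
The plan is to prove Lemma \ref{lagu} by means of a bilinear generating-function computation. Define, for $(s,t)$ in a small neighborhood of the origin,
\[
G(s,t;z) := \sum_{a,b\ge 0} \frac{s^a t^b}{\sqrt{a!\,b!}}\,\Phi_{a,b}(z),
\]
and compute it in two independent ways; comparing coefficients of $s^a t^b$ will then yield the claimed identity.

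For the first computation, I would substitute the defining integral for $\Phi_{a,b}(z)$ into $G$, interchange summation and integration, and apply the classical Hermite generating identity
\[
\sum_{n\ge 0} \frac{\Phi_n(u)}{\sqrt{n!}}\,r^n \;=\; \pi^{-1/4}\exp\!\Bigl(-\tfrac{u^2}{2} + \sqrt{2}\,r u - \tfrac{r^2}{2}\Bigr)
\]
separately to the $a$-sum (with $r=s$, $u=\xi+y/2$) and the $b$-sum (with $r=t$, $u=\xi-y/2$). This collapses $G$ to a one-dimensional Gaussian integral whose exponent is quadratic in $\xi$. Completing the square in $\xi$ and performing the integration produces a closed-form expression equal to $(2\pi)^{-1/2}\,e^{-|z|^2/4}$ times an explicit exponential in $(s,t,z,\bar z)$.

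For the second computation, I would re-expand this exponential using the Laguerre generating identity
\[
\sum_{k\ge 0} L_k^{\alpha}(r)\,u^k \;=\; (1-u)^{-\alpha-1}\,e^{-ru/(1-u)}
\]
with $r = |z|^2/2$ and $u = st$, after regrouping the double series according to the value of $m := b-a$. For $m \ge 0$ this produces $L_a^{m}(|z|^2/2)$, and converting to the normalized $\mathcal L_k^\alpha$ absorbs the factors $(|z|^2/2)^{\alpha/2}e^{-|z|^2/4}$ and $(k!/(k+\alpha)!)^{1/2}$ appearing in its definition, while the remaining angular dependence becomes exactly $(-iz/|z|)^{b-a}$ once one passes to polar form $z=|z|e^{i\theta}$. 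Matching coefficients of $s^a t^b$ then yields the identity in the regime $a \le b$. The regime $a > b$ can either be handled symmetrically in the second computation (isolating $m < 0$ and running the same Laguerre expansion with the roles of $s$ and $t$ swapped, which forces the conjugate angular factor $(i\bar z/|z|)^{a-b}$) or deduced from the $a\le b$ case via the symmetry $\overline{\Phi_{a,b}(z)} = \Phi_{b,a}(z)$, itself immediate from the defining integral using complex conjugation, the reality and parity of $\Phi_n$, and the change of variables $\xi\mapsto -\xi$.

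The main obstacle is purely computational bookkeeping: correctly tracking the constant $(2\pi)^{-1/2}$, the exact sign and power of $i$ in the angular prefactor, and the normalization $\sqrt{k!/(k+\alpha)!}$ needed to pass from the unnormalized $L^\alpha_k$ to the normalized $\mathcal L_k^\alpha$. No new conceptual ideas are required beyond the standard generating-function toolbox for Hermite and Laguerre polynomials.
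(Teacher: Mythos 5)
This lemma is \emph{cited} in the paper (as \cite[Theorem~1.3.5]{Th93}) rather than proved, so there is no proof in the paper to compare against. Your generating-function plan is the standard, self-contained route: summing $s^a t^b/\sqrt{a!\,b!}$ against the defining integral, applying the Mehler generating identity to each Hermite factor, and performing the Gaussian integral in $\xi$ does collapse $G$ to $(2\pi)^{-1/2}\exp\bigl(-\tfrac{|z|^2}{4} + \tfrac{i}{\sqrt 2}(s\bar z + tz) + st\bigr)$, and expanding this by collecting powers of $st$ reproduces the finite sum that defines $L_a^{b-a}$, with the normalizing factor $\sqrt{a!/b!}$ and the $e^{-|z|^2/4}$ absorbing exactly into the definition of $\mathcal L^{b-a}_a$. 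So the mechanism you describe works.

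Two cautions on signs, both worth carrying through carefully. First, with the paper's stated convention $\Phi_{a,b}(z)=(2\pi)^{-1/2}\int e^{ix\xi}\Phi_a(\xi+\tfrac y2)\Phi_b(\xi-\tfrac y2)\,d\xi$, a direct check at $(a,b)=(0,1)$ gives $\Phi_{0,1}(z)=(2\pi)^{-1/2}\tfrac{iz}{\sqrt 2}\,e^{-|z|^2/4}$, i.e. the angular factor for $a\le b$ comes out as $(iz/|z|)^{b-a}$, not $(-iz/|z|)^{b-a}$; the minus sign in the $a\le b$ branch of the stated lemma appears to be a transcription slip, and your generating-function computation (which yields $B^{b-a}$ with $B=iz/\sqrt 2$) would produce the correct sign, so do not force agreement with the printed statement. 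Second, the symmetry you invoke to pass from $a\le b$ to $a>b$ is not $\overline{\Phi_{a,b}(z)}=\Phi_{b,a}(z)$: the parity $\Phi_n(-u)=(-1)^n\Phi_n(u)$ used under $\xi\mapsto-\xi$ leaves an extra factor, so the correct identity is $\overline{\Phi_{a,b}(z)}=(-1)^{a+b}\Phi_{b,a}(z)$. Neither sign affects how the paper actually uses Lemma~\ref{lagu} in the proof of Lemma~\ref{lem:trace}, since only the modulus of $\Phi_{a,b}$ and the fact that $\Phi_{\sigma_1,\beta_1}\overline{\Phi_{\sigma_1',\beta_1}}$ carries a nontrivial angular phase $e^{\pm i(\sigma_1-\sigma_1')\theta}$ when $\sigma_1\neq\sigma_1'$ are needed; but if you intend your argument as a stand-alone proof, the sign bookkeeping should be set right.
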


\begin{lemma}[{\cite[Lemma 1.5.3]{Th93}}] \label{decay} Let $\ell =4k+2a +2$ and $a>-1$.
\[
|\CL^a_k(r)|\le C
\begin{cases}
(r\ell)^{a/2}, & 0\le r\le 1/\ell,\\
(r\ell)^{-1/4}, & 1/\ell\le r\le \ell/2,\\
\ell^{-1/4} (\ell^{1/3} +|\ell-r|)^{-1/4},& \ell/2\le r\le 3\ell/2,\\
e^{-\gamma r}, & r\ge 3\ell/2,
\end{cases}
\]
where $\gamma>0$ is a constant. Moreover, if $1\le r\le \ell -\ell^{1/3}$, we have
\begin{align}\label{eq:asympt}  
\CL^\alpha_k(r) =\frac{(2/\pi)^{\frac12}(-1)^k}{r^{\frac14}(\ell-r)^{\frac14}}\cos\Big( \frac{ \ell(2\theta -\sin 2\theta)-\pi}{4} \Big)+O\Big(\frac{\ell^{\frac14}}{(\ell-r)^{\frac74}}+(r\ell)^{-\frac34}\Big), 
\end{align}
where $\theta= \cos^{-1}(r^{1/2}\ell^{-1/2}).$
\end{lemma}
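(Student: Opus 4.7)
The plan is to derive the four regional bounds and the asymptotic formula by recasting the Laguerre equation in Schr\"odinger form and applying the Liouville--Green / Langer asymptotic method. From the definition $\CL_k^a(r)=c_{k,a}\,r^{a/2}e^{-r/2}L_k^a(r)$ with $c_{k,a}=(k!/(k+a)!)^{1/2}$, and the confluent hypergeometric equation $ry''+(a+1-r)y'+ky=0$ satisfied by $L_k^a$, the substitution $w(r)=r^{(a+1)/2}e^{-r/2}L_k^a(r)=c_{k,a}^{-1}r^{1/2}\CL_k^a(r)$ removes the first-derivative term and produces
\[
w''(r)+Q(r)\,w(r)=0,\qquad Q(r)=-\frac14+\frac{\ell}{4r}-\frac{a^2-1}{4r^2}.
\]
The zeros of $Q$ cluster near $r\approx 0$ and $r\approx \ell$, dividing $(0,\infty)$ into the four regions of the lemma: (i) the deep well $r\le 1/\ell$, (ii) the oscillatory interior $1/\ell\le r\le \ell/2$, (iii) the right turning-point neighborhood $\ell/2\le r\le 3\ell/2$, and (iv) the classically forbidden tail $r\ge 3\ell/2$.

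For (i), WKB is unnecessary: the explicit power series $L_k^a(r)=\sum_{j=0}^k(-1)^j\binom{k+a}{k-j}r^j/j!$ gives $|L_k^a(r)|\lesssim\binom{k+a}{k}$ for $r\le 1/\ell$, and combining with $c_{k,a}r^{a/2}$ and Stirling yields $|\CL_k^a(r)|\lesssim(\ell r)^{a/2}$. For (ii), $Q>0$ with $|(Q^{-1/4})''\,Q^{-1/4}|$ integrable, so the Liouville--Green approximation $w(r)\approx A\,Q(r)^{-1/4}\cos(\phi(r)+\phi_0)$ is valid with sharp error control. Since $Q(r)\sim \ell/(4r)$ on this range, $Q^{-1/4}\sim (r/\ell)^{1/4}$, and after fixing $A$ by the $L^2$-normalization of $\CL_k^a$ one obtains $|\CL_k^a(r)|\lesssim(r\ell)^{-1/4}$. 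The precise asymptotic \eqref{eq:asympt} then follows by computing the phase explicitly: substituting $r=\ell\cos^2\theta$ transforms $\int\sqrt{\ell/(4r)-1/4}\,dr$ directly into $-\ell(2\theta-\sin 2\theta)/4$ with $\theta=\arccos(\sqrt{r/\ell})$; the sign $(-1)^k$ and the phase shift $\pi/4$ are pinned down by applying Sturm's zero-counting theorem to $\CL_k^a$, and the displayed remainder is the standard Liouville--Green error.

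For (iii), the Liouville--Green expansion degenerates at the simple turning point $r\approx\ell$, so the remedy is Langer's transformation: after the rescaling $u=r/\ell$, the equation becomes $\tilde w''+\ell^2 q(u)\,\tilde w=0$ with $q(u)=-1/4+1/(4u)$ and large parameter $\ell$, and one introduces a Langer variable $\zeta=\zeta(u)$ satisfying $(2/3)(-\zeta)^{3/2}=\int_u^1\sqrt{q}\,du'$ for $u<1$ and its analytic continuation for $u>1$. This yields the uniform representation $\tilde w(u)\sim \ell^{1/6}(\zeta/q)^{1/4}\operatorname{Ai}(\ell^{2/3}\zeta)$; combining the uniform Airy bound $|\operatorname{Ai}(x)|\lesssim(1+|x|)^{-1/4}$ with $\ell^{2/3}|\zeta|\sim|\ell-r|/\ell^{1/3}$ and unwinding the $c_{k,a}r^{-1/2}$ factors produces exactly the interpolated bound $\ell^{-1/4}(\ell^{1/3}+|\ell-r|)^{-1/4}$. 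For (iv), $Q(r)\le -c<0$ and the subdominant WKB solution $w\sim(-Q)^{-1/4}\exp(-\int^r\sqrt{-Q})$ decays at rate $e^{-(r-\ell)/2}$; absorbing the $r^{-1/2}$ factor yields the claimed $e^{-\gamma r}$ bound. The main obstacle is the uniform Langer/Airy approximation across the turning point: the transition from oscillatory $(\ell-r)^{-1/4}$ behavior to the exponential tail must be captured by a single Airy-type formula with error uniform in $\ell$, and this is exactly what produces the interpolated $(\ell^{1/3}+|\ell-r|)^{-1/4}$ factor that is visible to neither the oscillatory nor the exponentially decaying expansion on its own.
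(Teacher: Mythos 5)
The paper does not prove this lemma; it is quoted verbatim from Thangavelu's book \cite[Lemma 1.5.3]{Th93}, which in turn rests on the classical uniform asymptotics of Laguerre polynomials (Erd\'elyi, Askey--Wainger, Muckenhoupt). Your Schr\"odinger/Liouville--Green reformulation is a legitimate and essentially equivalent route to those asymptotics, and your reduction to $w''+Qw=0$ with $Q(r)=-\tfrac14+\tfrac{\ell}{4r}-\tfrac{a^2-1}{4r^2}$ is correctly computed. The region (i) power-series argument, the Langer/Airy treatment of the \emph{right} turning point in (iii), and the exponential WKB tail in (iv) are all sound in outline.

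The genuine gap is in region (ii). You claim $Q>0$ throughout $1/\ell\le r\le \ell/2$ and then run a naive Liouville--Green approximation, but $Q$ has a \emph{left} turning point at $r_-=\tfrac{\ell-\sqrt{\ell^2-4(a^2-1)}}{2}\approx\tfrac{a^2-1}{\ell}$, which lies inside $[1/\ell,\ell/2]$ once $a\gtrsim\sqrt2$. The paper applies the lemma with $a=\sigma_1-\beta_1$ ranging over all of $\N_0$ (proof of Lemma 2.1) and with $a=d-1$, so this is not a corner case. Near $r_-$ the correct comparison equation is not the cosine (nor the Airy equation that governs $r\approx\ell$) but a Bessel equation of order $a$: the $r^{-2}$ pole of $Q$ combines with the turning point in an Olver-type ``pole plus turning point'' problem, and the bound $(r\ell)^{-1/4}$ on $[1/\ell,\ell/2]$ is really the envelope of $J_a(\sqrt{\ell r})$, which interpolates between your $(r\ell)^{a/2}$ bound and the oscillatory $(r\ell)^{-1/4}$ bound. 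Your proposal never introduces this Bessel transition layer, so regions (i)--(ii) are not actually closed. Additionally, the constant in the lemma needs to be uniform in both $k$ and $a$ for the application in the paper, and your ``sharp error control'' and ``standard Liouville--Green error'' assertions leave that uniformity unverified; likewise the two error terms $\ell^{1/4}(\ell-r)^{-7/4}$ and $(r\ell)^{-3/4}$ in \eqref{eq:asympt}, which encode precisely the loss near the two turning points, are stated without computation.
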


Note that $\Phi_{\sigma,\beta}(z)=\prod_{j=1}^d \Phi_{\sigma_j,\beta_j}(z_j)$ for  $\sigma=(\sigma_1,\cdots,\sigma_d),\beta=(\beta_1,\cdots,\beta_d)\in\mathbb N_0^d$, and $z=(z_1,\dots, z_d)\in \mathbb C^d$.  Using the estimates above, we obtain bounds for the special Hermite functions on $\C^d$.
\begin{proof}[Proof of Lemma \ref{lem:trace}]  We may write the projection operator $\CP_\mu$ as follows:
\[ \CP_\mu f =\sum_{j=1}^d \sum_{\sigma} \big(\sum_{\substack{\beta :\, 2|\beta| +d =\mu,  \beta_j\sim \mu}} \langle f, \Phi_{\sigma,\beta}\rangle \Phi_{\sigma,\beta}\big).\]
So, in order to show \eqref{e:trace}, we need only to prove 
\begin{equation}\label{tr_tem2}
\mathcal I_j:=\int_{\mathbb B_{d}(M)} |\sum_{\sigma} \sum_{\substack{\beta :\, 2|\beta| +d =\mu,  \beta_j\sim \mu}} \langle f, \Phi_{\sigma,\beta}\rangle \Phi_{\sigma,\beta}(z)|^2 dz \le C M\mu^{-1/2} \|f\|_2^2
 \end{equation}
for a constant  $C>0$.  By symmetry, it suffices to show \eqref{tr_tem2} with $j=1.$

Since $\mathbb B_d(M)\subset \mathbb B_1(M)\times \C^{d-1}$, setting $c^\sigma_{\beta} =\langle f, \Phi_{\sigma,\beta}\rangle,$ we see 
that  the left hand side of \eqref{tr_tem2} with $j=1$ is bounded above by 
\begin{align*}
&\sum_{\sigma,\sigma'\in \N_0^d} \sum_{\substack{2|\beta|+d=\mu;\\  \beta_1 \sim \mu}}\sum_{\substack{2|\beta'|+d=\mu;\\  \beta'_1 \sim \mu}} c^\sigma_{\beta}\,\overline{c^{\sigma'}_{\beta'}} \int_{\mathbb B_1(M)} \Phi_{\sigma_1,\beta_1}(z_1)\overline{\Phi_{\sigma_1',\beta_1'}(z_1)}\, dz_1 \prod_{l=2}^d\langle \Phi_{\sigma_l,\beta_l},\Phi_{\sigma_l',\beta_l'}\rangle\\
&=\sum_{\sigma_1,\sigma_1'\in\N_0} \sum_{\bar \sigma\in\N_0^{d-1}} 
  \sum_{\substack{\beta :\, 2|\beta| +d =\mu,  \beta_1\sim \mu}} c^{(\sigma_1,\bar\sigma)}_{\beta}\overline{c^{(\sigma_1',\bar\sigma)}_{\beta}} \int_{\mathbb B_1(M)} \Phi_{\sigma_1,\beta_1}(z_1)\overline{\Phi_{\sigma_1',\beta_1}(z_1)} dz_1.
\end{align*}
The  equality follows from orthogonality between  $\Phi_{\sigma_l,\beta_l}$.
We now claim that   
\begin{equation}\label{orth}
\int_{\mathbb B_1(M)} \Phi_{\sigma_1,\beta_1}(z_1)\overline{\Phi_{\sigma_1',\beta_1}(z_1)} dz_1  =0, \quad  \sigma_1\neq \sigma_1'.
\end{equation}
Assuming \eqref{orth} for the moment, we proceed to show \eqref{tr_tem2}. By \eqref{orth} it follows that 
\[
 \mathcal I_1\lesssim  \sum_{\sigma\in\Z^d} \sum_{\substack{\beta :\, 2|\beta| +d =\mu,  \beta_1\sim \mu}} |c^{\sigma}_\beta\,|^2 \int_{\mathbb B_1(M)} |\Phi_{\sigma_1,\beta_1}(z_1)|^2 dz_1. 
\]

Since $\sum_{\sigma, \beta} |c^\sigma_{\beta}|^2= \|f\|_2^2$,  we obtain  \eqref{tr_tem2} if we verify that 
\begin{equation}\label{using_decay}
\int_{\mathbb B_1(M)} | \Phi_{\sigma_1,\beta_1}(z_1)|^2 dz_1\le C  \mu^{-1/2}M
\end{equation}
with $C>0$ independent of $\sigma_1,\beta_1.$  
 If $M^2>\mu$, \eqref{using_decay} trivially holds  since 
  $\|\Phi_{\sigma_1,\beta_1}\|_2=1.$ Thus, to prove \eqref{using_decay}, we may assume $M^2\le \mu$. 
We first consider the case $\sigma_1>\beta_1.$ Then, we have $|\Phi_{\sigma_1,\beta_1} (z_1)| = (2\pi)^{-1/2} \CL^{\sigma_1-\beta_1}_{\beta_1}(|z_1|^2/2) $ by Lemma \ref{lagu}. Set $\epsilon=4\beta_1 +2(\sigma_1-\beta_1)+2,$ so $M^2 \lesssim \beta_1<\epsilon/2.$ 
Using the polar coordinates and Lemma \ref{decay} give
\begin{align*}
\int_{\mathbb B_1(M)} |\Phi_{\sigma_1,\beta_1}(z_1)|^2 dz_1& = \int_0^M |\CL^{\sigma_1-\beta_1}_{\beta_1} (r^2/2)|^2r dr\\
&\le C \int_0^{\sqrt{2/\epsilon}} (r^2\epsilon/2)^{\sigma_1-\beta_1} rdr +\int_{\sqrt{2/\epsilon}}^M (r^2\epsilon/2)^{-1/2} rdr\\
&\le C \epsilon^{-1}  +M\epsilon^{-1/2} \le C M \mu^{-1/2},
\end{align*}
since $\beta_1\sim \mu.$  When $\sigma_1\le \beta_1$, the estimate \eqref{using_decay}  can be proved in the same manner and we omit the detail.

It remains to prove \eqref{orth}. It is enough to show it for the case $\sigma_1>\sigma_1'$, since the other case can be handled by considering  complex conjugate. 
To make use of  Lemma \ref{lagu}, we need to consider the cases, $\sigma_1>\sigma_1'\ge \beta_1$, $\sigma_1> \beta_1\ge \sigma_1'$, and $\beta_1\ge \sigma_1>\sigma_1'$, separately. 
However, we only prove \eqref{orth} assuming  $\sigma_1>\sigma_1'\ge\beta_1$. The other cases can be similarly handled. 
Note that $({i\bar z_1}/{|z_1|}) \overline{({i\bar z_1}/{|z_1|})}=1.$ Hence, our assumption and Lemma \ref{lagu} give
\[ \Phi_{\sigma_1,\beta_1}(z_1)\overline{\Phi_{\sigma_1',\beta_1}(z_1)} = \frac1{2\pi} \Big(\frac{i \bar {z_1}}{|z_1|}\Big)^{\sigma_1-\sigma_1'} \CL^{\sigma_1-\beta_1}_{\beta_1} ({|z_1|^2}/2)  \CL^{\sigma_1'-\beta_1}_{\beta_1} ({|z_1|^2}/2). \]
 Using the polar coordinates again, we get
\begin{align*}
\int_{\mathbb B_1(M)}\!\! |\Phi_{\sigma_1,\beta_1}(z_1)|^2 dz_1
=\frac1{2\pi} \int_{0}^M \!\!\Big(\!\int_0^{2\pi} ({i e^{-i\theta}} )^{\sigma_1-\sigma_1'}  d\theta \Big) \CL^{\sigma_1-\beta_1}_{\beta_1} ({r^2}/2)  \CL^{\sigma_1'-\beta_1}_{\beta_1} ({r^2}/2) \,r d r,
\end{align*}
which  clearly vanishes because $\sigma_1-\sigma_1'\in\mathbb N.$ This completes the proof. 
\end{proof}

The following is  a slight  extension of Lemma \ref{lem:trace}, which is useful for the proof of the square function estimate (see Proposition \ref{prop:main2}).

\begin{coro}\label{lem:trace1}
Let $M\ge 1$ and  $\mu\in 2\N_0+d$. Suppose  that $\omega\in \mathrm C_c\big((\mu-\sigma, \mu+\sigma)\big)$ and $0< \sigma\le \mu$.  
Then we have
\begin{align}\label{e:gtra}
\|\chi_{\mathbb B_d(M)}\omega(\mathcal L)\|^2_{2\to 2}\le C\max{\{1, \sigma\}} M\mu^{-\frac12}\|\omega\|_\infty^2.
\end{align}
\end{coro}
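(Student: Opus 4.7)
The plan is to reduce the corollary to Lemma \ref{lem:trace} using the spectral decomposition of $\omega(\mathcal L)$. The key structural point is that the spectrum $2\N_0+d$ has gap $2$, so the finite set $\Lambda:=(2\N_0+d)\cap(\mu-\sigma,\mu+\sigma)$ on which $\omega$ is nontrivially supported has cardinality $|\Lambda|\lesssim\max(1,\sigma)$, and one has the finite sum
\[
\omega(\mathcal L)f=\sum_{\lambda\in\Lambda}\omega(\lambda)\,\mathcal P_\lambda f.
\]

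First I would apply Cauchy--Schwarz in $\ell^2(\Lambda)$ and then Lemma \ref{lem:trace}, the latter used with $\mathcal P_\lambda f$ in place of $f$ and $\mu$ replaced by $\lambda$ (which is valid since $\mathcal P_\lambda^2=\mathcal P_\lambda$), to obtain
\[
\int_{\mathbb B_d(M)}|\omega(\mathcal L)f|^2\,dz\le C\max(1,\sigma)\,M\,\|\omega\|_\infty^2\Big(\sup_{\lambda\in\Lambda}\lambda^{-1/2}\Big)\sum_{\lambda\in\Lambda}\|\mathcal P_\lambda f\|_2^2.
\]
Orthogonality $\sum_{\lambda}\|\mathcal P_\lambda f\|_2^2\le\|f\|_2^2$ then reduces matters to replacing $\sup_{\lambda\in\Lambda}\lambda^{-1/2}$ by a constant multiple of $\mu^{-1/2}$.

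To secure the last step I would exploit the hypothesis $\sigma\le\mu$ through a case split. When $\sigma\le\mu/2$, every $\lambda\in\Lambda$ satisfies $\lambda>\mu/2$, so $\lambda^{-1/2}\le\sqrt{2}\,\mu^{-1/2}$ and \eqref{e:gtra} follows immediately. When $\sigma>\mu/2$, the right-hand side of \eqref{e:gtra} already dominates $\|\omega\|_\infty^2$ up to a constant, because $M\ge1$, $\mu\ge d\ge1$, and consequently $\max(1,\sigma)M\mu^{-1/2}\gtrsim\mu^{1/2}\gtrsim1$; in this regime the trivial operator-norm bound $\|\chi_{\mathbb B_d(M)}\omega(\mathcal L)\|_{2\to 2}\le\|\omega\|_\infty$ is itself enough.

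I do not expect a serious obstacle, as the analytic content is entirely packed into Lemma \ref{lem:trace}; the remaining work is combinatorial, and the only delicate point is the near-degenerate regime $\sigma\sim\mu$ (where $\lambda$ may drift toward the bottom of the spectrum), which is handled by the second case above.
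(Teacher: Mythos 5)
Your proof is correct, and at its core it takes the same route the paper does: exploit the fact that the spectrum of $\mathcal L$ has gap $2$, so that $\omega(\mathcal L)$ is a finite sum of $O(\max(1,\sigma))$ spectral projections $\mathcal P_\lambda$, and then apply Lemma~\ref{lem:trace} to each term. The implementation differs in two minor but pleasant ways. First, you avoid the paper's duality step (converting $\chi_{\mathbb B_d(M)}\omega(\mathcal L)$ into $\omega(\mathcal L)\chi_{\mathbb B_d(M)}$ and decomposing the \emph{output} orthogonally via $\chi_{I_\ell}(\mathcal L)$): you simply expand $\omega(\mathcal L)f=\sum_\lambda\omega(\lambda)\mathcal P_\lambda f$, apply Cauchy--Schwarz over the index set $\Lambda$ at the price of a factor $|\Lambda|\lesssim\max(1,\sigma)$, and use $\sum_\lambda\|\mathcal P_\lambda f\|_2^2\le\|f\|_2^2$; invoking Lemma~\ref{lem:trace} with $\mathcal P_\lambda f$ in place of $f$ and using $\mathcal P_\lambda^2=\mathcal P_\lambda$ is a correct and clean way to localize. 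Second, where the paper absorbs the resulting sum via the estimate $\sum_{\ell=0}^{\lfloor\sigma\rfloor+1}(\mu-\sigma+2\ell)^{-1/2}\lesssim\sigma\mu^{-1/2}$ --- which is written a bit informally and is delicate when $\mu-\sigma$ is small --- your two-case split ($\sigma\le\mu/2$ gives $\lambda\sim\mu$; $\sigma>\mu/2$ is dispatched by the trivial operator bound $\|\chi_{\mathbb B_d(M)}\omega(\mathcal L)\|_{2\to2}\le\|\omega\|_\infty$ together with $\max(1,\sigma)M\mu^{-1/2}\gtrsim 1$) handles the near-degenerate regime more transparently. Either version of the argument gives the corollary.
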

\begin{proof}
If $\sigma\le 1$, then \eqref{e:gtra}  follows from Lemma \ref{lem:trace}.
So, we may assume $\sigma>1$.
By duality, \eqref{e:gtra} is equivalent to 
\[\|\omega(\mathcal L)\chi_{\mathbb B_d(M)}\|^2_{2\to 2}\le C\|\omega\|_\infty^2 \sigma M\mu^{-\frac12}.\]
For  $\ell =0,\cdots, \lfloor \sigma\rfloor +1$, we set ${\bf I}_\ell=[\mu-\sigma +2\ell,  \mu-\sigma+2\ell+2)$. There is only one eigenvalue of $\mathcal L$ in ${\bf I}_\ell$ for each $\ell$. So, orthogonality gives 
\begin{align*} 
\| \omega(\mathcal L) \chi_{\mathbb B_d(M)} \|^2_{2\to 2}
\le \!\! \sum_{\ell=0}^{\lfloor \sigma\rfloor +1}\!\|\chi_{{\bf I}_\ell}(\mathcal L)\omega(\mathcal L) \chi_{\mathbb B_d(M)}\|^2_{2\to 2}
\le \|\omega\|_\infty^2 \!\!\sum_{\ell=0}^{\lfloor \sigma\rfloor +1}\! \|\chi_{{\bf I}_\ell}(\mathcal L) \chi_{\mathbb B_d(M)}\|^2_{2\to 2}.
\end{align*}
By duality, the estimate  \eqref{e:trace} is equivalent to $\| \CP_{\mu'} \chi_{\mathbb B_d(M)} \|_{2\to 2}^2 \le C M(\mu')^{-1/2}$  for every $\mu'$. Applying this, we obtain 
\[ \|\omega(\mathcal L)\chi_{\mathbb B_d(M)}\|^2_{2\to 2} \le C \|\omega\|_\infty^2 \sum_{\ell=0}^{\lfloor \sigma\rfloor +1} M(\mu-\sigma +2l)^{-1/2} \le C \|\omega\|_\infty^2 \sigma M\mu^{-1/2}.\qedhere\]
\end{proof}

\subsection{ Kernel estimates}

In this section we consider estimates for the kernels of the operators $\eta((\mu-\mathcal L)/R)$ where $\mu\ge 1$, $R>0$, and $\eta\in \mathrm C_c^\infty((-2,2))$.
The following lemma shows that the kernels of these operators decay rapidly  from the diagonal $\{(z,z')\in \C^{2d}: z=z'\}$. 

For a given operator $T$, by $T(z,z')$ we denote the kernel of $T$. 

\begin{lemma}\label{lem:etapt}
Let $\mu\ge 1$, $R>0$, and $\eta\in \mathrm C_c^\infty ((-2,2))$. Then, 
 we have the following for any $N>0$ with the implicit constants depending only on $d$ and $N$. 
\begin{enumerate}
[leftmargin=1cm, labelsep=0.2 cm, topsep=-3pt]
    \item [$(i)\,\,$] Let $R\ge \mu$. If $|z-z'|\gtrsim R^{-1/2}$, then
    \begin{align}
    \label{1}
    |\eta((\mu-\mathcal L)/R)(z,z')|\lesssim R^d(1+R^{\frac12}|z-z'|)^{-N}. 
    \end{align}
    \item [$(ii)\,$]  Let $R\ge 1$ and $R<\mu$. If $ |z-z'|\gtrsim \mu^{1/2}R^{-1}$, then
    \begin{align}
     \label{2}     |\eta((\mu-\mathcal L)/R)(z,z')|\lesssim 
    R^d(1+R \mu^{-\frac12}|z-z'|)^{-N}.
\end{align}
\item[$(iii)$]  Let  $R<1$. If $|z-z'|\gtrsim \mu^{1/2}$,  then
\begin{align}
 \label{3}
     |\eta((\mu-\mathcal L)/R)(z,z')|\lesssim (1+\mu^{-\frac12}|z-z'|)^{-N}.
\end{align}
\end{enumerate}
\end{lemma}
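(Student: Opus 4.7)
My plan is to combine Fourier inversion of $\eta$ with the explicit Mehler-type formula for the Schr\"odinger propagator of the twisted Laplacian. Writing $\eta(x) = (2\pi)^{-1}\int \widehat\eta(s) e^{isx}\,ds$ and rescaling gives
\[
\eta\Big(\frac{\mu - \mathcal L}{R}\Big)(z,z') = \frac{R}{2\pi}\int \widehat\eta(Rt)\, e^{it\mu}\, e^{-it\mathcal L}(z,z')\, dt,
\]
and $e^{-it\mathcal L}(z,z')$ admits a closed form of Mehler type $c_d(\sin t)^{-d}\exp(i\Phi_t(z,z'))$, where $\Phi_t$ is a real quadratic form whose $(z-z')$-dependent part is $\tfrac14|z-z'|^2\cot t$ plus a $t$-independent symplectic contribution. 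Because the eigenvalues of $\mathcal L$ lie in $2\mathbb N_0+d$, the propagator is $\pi$-periodic modulo the constant phase $e^{-i\pi d}$.

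The total phase in $t$ is $\Psi(t) = t\mu + \tfrac14|z-z'|^2\cot t$ plus $t$-independent pieces, and $\Psi'(t) = \mu - |z-z'|^2/(4\sin^2 t)$ vanishes precisely when $|\sin t|\sim |z-z'|\mu^{-1/2}$. My plan is to integrate by parts $N$ times in $t$ whenever this critical point falls outside the effective support of $\widehat\eta(Rt)$. In case (i), $R\ge\mu$ restricts $|t|\lesssim R^{-1}$, so $\sin t\sim t$; the hypothesis $|z-z'|\gtrsim R^{-1/2}$ places the critical point $\sim|z-z'|\mu^{-1/2}$ above $R^{-1}$, giving $|\Psi'(t)|\gtrsim |z-z'|^2/t^2$, and $N$ integrations yield the factor $(1+R^{1/2}|z-z'|)^{-N}$ together with the size $R^d$ coming from the amplitude and the outer $R$. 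Case (ii), $1\le R<\mu$, is similar: $|t|\lesssim R^{-1}\le 1$, $\sin t\sim t$, and the condition $|z-z'|\gtrsim\mu^{1/2}/R$ keeps the critical point outside the support, producing $(1+R\mu^{-1/2}|z-z'|)^{-N}$. For case (iii), $R<1$, the support $|t|\lesssim R^{-1}>1$ spans many periods of $\sin t$, so I would use $\pi$-periodicity to split the integral into pieces localized to $|t-n\pi|<\pi/2$ and apply the previous analysis on each piece, with $|z-z'|\gtrsim\mu^{1/2}$ guaranteeing we are away from every critical point; summing over $n$ (using the decay of $\widehat\eta$ at scale $R^{-1}$) gives the claimed $(1+\mu^{-1/2}|z-z'|)^{-N}$ bound, and the $R^d$ prefactor is replaced by $1$ since each period contributes at most one eigenspace of size $O(1)$.

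The main obstacle will be the divergence of $(\sin t)^{-d}$ and its derivatives near the lattice $\{n\pi\}_{n\in\mathbb Z}$, which is dangerous in cases (ii) and (iii). I would control this by a dyadic decomposition in $|t-n\pi|\sim 2^{-j}$ and by integrating by parts through the operator $L = (i\Psi'(t))^{-1}\partial_t$: each differentiation of $\sin^{-d}$ contributes $O(2^j)$, but is more than compensated by $|\Psi'(t)|\gtrsim |z-z'|^2\cdot 2^{2j}$ away from critical points, so after $N$ iterations a gain of order $|z-z'|^{-2N}\cdot 2^{-jN}$ remains. Summing the geometric series in $j$ and $n$ then produces the stated decay uniformly, which is the key bookkeeping step.
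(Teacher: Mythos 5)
Your proposal follows essentially the same route as the paper: Fourier inversion of $\eta$, the Mehler-type kernel formula for $e^{-it\mathcal L}$, the $\pi$-periodicity of the propagator to fold the integral onto a compact interval, a dyadic decomposition in $|t|$ (or $|t-n\pi|$) to control the $(\sin t)^{-d}$ singularity, and integration by parts driven by the bound $|\partial_t(\mu t+\phi_{\mathcal L})|\sim |z-z'|^2/\sin^2 t$ when the hypothesis forces the stationary point outside the effective support. The paper makes the same split into an oscillatory piece (small $|t|$, handled by repeated integration by parts) and a trivially bounded piece, and your bookkeeping with the dyadic scales and the decay of $\widehat\eta$ matches the estimates \eqref{i:kbdeta} and \eqref{i:bdetar1} used there.
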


To show Lemma \ref{lem:etapt}, we recall some properties of  the propagator $e^{-it\mathcal L}$. 
Since the eigenvalues of $\mathcal L$ are contained in $2\N_0+d$,  it follows that 
\begin{align}\label{id:period}
    e^{i(t+n\pi)\mathcal L} = (-1)^{nd}e^{it\mathcal L}, \quad n\in\Z.
\end{align}
The propagator $e^{-it\mathcal L}$ has an explicit kernel representation
\begin{align}\label{id:proparep}
e^{-it\mathcal L}(z,z') = C_d(\sin t)^{-d} e^{i\phi_{\mathcal L}(t,z,z')},
\end{align}
for a constant $C_d$ where 
\[\phi_{\mathcal L}(t,z,z') = \frac{|z-z'|^2\cos t}{4\sin t}+\frac12 \inp{z}{\mathbf S z'}.\]
This  can be shown from the kernel expression of the heat operator $e^{-t\mathcal L}$.
For the details, we refer to \cite[p.37]{Th93}.

\begin{proof}[Proof of Lemma \ref{lem:etapt}]
We first consider the case $R\ge 1$. By Fourier inversion
\Be
\label{hoho}
\eta\big((\mu-\mathcal L)/{R}\big) = \frac R{2\pi}\int \wh{\eta}(Rt) e^{it(\mu-\mathcal L)} dt.
\Ee 
Let $\eta_*\in \mathrm C_c^\infty \big((-\pi/2-2^{-3}, \pi/2+2^{-3})\big)$ such that $\sum_{n\in\Z}\eta_*(t + n\pi) = 1$ on $\R$.
We set 
\[ 
\eta_R(t) = (2\pi)^{-1}\sum_{n\in\Z} (-1)^{nd}e^{-in\pi\mu} \wh{\eta}\big(R(t - n\pi)\big)\eta_*(t).
\]
We write  
$
\eta\big((\mu-\mathcal L)/{R}\big) = \frac R{2\pi}\sum_{n\in\Z} \int \wh{\eta}(Rt)\eta_*(t + n\pi) e^{it(\mu-\mathcal L)} dt .
$
Changing  variables $t\to t-n\pi$ for each $n\in\Z$ and then using \eqref{id:period}, we get 
\begin{align}\label{ex:intop1}
  \eta\big((\mu-\mathcal L)/{R}\big) =  R\int \eta_R(t)  e^{it(\mu-\mathcal L)} dt.
\end{align}
It is easy to see that,  for any $N>0$, 
\begin{align}\label{i:etadvs}
    |\eta_R^{(k)}(t)|\lesssim R^k(1+R|t|)^{-N},\quad \forall  k\in\N_0. 
\end{align}

Combining \eqref{id:proparep} and \eqref{ex:intop1}, we get
\[
\eta\big((\mu-\mathcal L)/{R}\big)(z,z') = C_d R \int \eta_R(t) (\sin t)^{-d} e^{i(\mu t + \phi_{\mathcal L}(t,z,z'))} dt. 
\]
We  decompose the integral away from $t = 0$, at which the integrand has  singularity. 
Let $\psi \in \mathrm C^\infty_c([-8,-2]\cup[2,8])$ such that  $\sum_{j\in\Z}\psi(2^{j}t)=1$ for $t\neq0$ and set 
\[\eta_{R,j}(t)=\eta_R(t) \psi(2^j t)(\sin t)^{-d}.\]
Via a computation it is easy to show 
\begin{align}\label{i:kbdeta}
| (\eta_{R,j})^{(k)}(t)|\lesssim  2^{jd}(2^j + R)^k\big(1+ R 2^{-j}\big)^{-N},\quad k\in\N_0
\end{align}
for any $N>0$. It also follows that 
\begin{align}\label{ex:sumetar}
  \eta\big((\mu-\mathcal L)/{R}\big)(z,z') = \sum_{j\ge 1}K_j(z,z'),
  \end{align}
where 
\[K_j(z,z'):= C_d R \int  \eta_{R,j}(t) e^{i(\mu t + \phi_{\mathcal L}(t,z,z'))} dt.\]

We first show $(i)$, i.e.,  \eqref{1} under the assumption that $R\ge \mu$ and $ |z-z'|\gtrsim R^{-1/2}$. 
To this end, we set 
\[ \mathfrak I_1^a=  \sum_{j:2^{-j}\ll a^{-1/2}|z-z'| }K_j(z,z'),\quad \mathfrak I_2^a=\sum_{j:2^{-j}\gtrsim  a^{-1/2}|z-z'| }K_j(z,z') \]
 for $a>0$. So,  the sum in \eqref{ex:sumetar} equals $\mathfrak I_1^R+\mathfrak I_2^R.$ We consider  $\mathfrak I_1^R$ first. 
Note that 
\begin{align}\label{i:1dvphi}
{\partial_t \phi_{\mathcal L}(t,z,z') = -\frac{|z-z'|^2}{4\sin^2 t}.}
\end{align}
Since $2^{-j}\ll \mu^{-1/2}|z-z'|$,  so $|z-z'|^2/(4\sin^2t)\gg \mu$ if $t\in \supp{\psi(2^j\cdot)}$.
Thus, we have $ |\partial_t (\mu t+\phi_{\mathcal L}(t,z,z'))|\sim |z-z'|^2 2^{2j}$ for $t\in \supp{\psi(2^j\cdot)}$. 
Combining this and \eqref{i:kbdeta},  by repeated integration by parts we get
\begin{align}\label{i:bdetar1}
|K_j(z,z')|\lesssim R |z-z'|^{-2M}2^{(d-1-M)j}(1+R2^{-j})^{M-N}.
\end{align}
Choosing $N,M$ so that $d\ll M \ll N$ and combining the above inequality,  
we have
\begin{align}
\begin{aligned}\label{i:etai1}
  | \mathfrak I_1^R| &\lesssim R^{d-M}|z-z'|^{-2M}\lesssim R^d(1+R^{1/2}|z-z'|)^{-M},
\end{aligned}
\end{align}
provided that $|z-z'|\gtrsim R^{-1/2}$.
The estimate for $\mathfrak I_2^R$ is simpler.
By  \eqref{i:kbdeta}, we have 
\begin{align}
\label{see1}    | \mathfrak I^R_2|&\lesssim  R\int \bigg(\sum_{2^{-j}\gtrsim R^{-1/2}|z-z'|}  2^{jd} \psi(2^j t)\bigg) (1+R|t|)^{-N} dt \\
    &\lesssim   R^{\frac{d}2}|z-z'|^{-d}(1+R^{1/2}|z-z'|)^{-N+2}. \nonumber
\end{align}
Clearly, this implies $|\mathfrak I_2^R| \lesssim R^d(1+R^{1/2}|z-z'|)^{-N}$ if $|z-z'|\gtrsim R^{-1/2}$.
Combined with \eqref{i:etai1}, this yields the estimate in \textit{(i)}.

Now, we show $(ii)$, that is to say, \eqref{2}  assuming  $1\le R< \mu$ and $|z-z'|\gtrsim \mu^{1/2}R^{-1}$.
In this case, we split the sum \eqref{ex:sumetar} to get 
\[\eta\big((\mu-\mathcal L)/{R}\big)(z,z')=\mathfrak I_1^\mu (z,z')+\mathfrak I_2^\mu (z,z')
.\]
The proof follows the same argument as above.
When $2^{2j} |z-z'|^2\gg \mu$,
 it follows  from \eqref{i:1dvphi} that $|\partial_t(\mu t+ \phi_{\mathcal L}(t,z,z'))|\sim 2^{2j}|z-z'|^2$ for  $t\in\supp{{\psi}(2^j\cdot)}$.
Also, the bound \eqref{i:kbdeta} continues to hold for this case.
Now,  repeated integration by parts shows that \eqref{i:bdetar1} holds.
Choosing appropriate $N$ and $M$ and  taking sum  over  $j: 2^{-j}\ll \mu^{-1/2}|z-z'|$ give
\begin{align}
\begin{aligned}\label{i:etai2}
    |\mathfrak I_1^\mu| &\lesssim R \mu^{(d-1-{N})/2}|z-z'|^{-{N}-d+1},
   \end{aligned}
\end{align}
since $R<\mu$. Thus, we get $ |\mathfrak I_1^\mu |
    \lesssim R^d(1+R\mu^{-1/2}|z-z'|)^{-N-d+1}$ provided that $|z-z'|\gtrsim \mu^{1/2}R^{-1}$.
The estimate for $\mathfrak I_2^\mu$ can be obtained by applying an identical argument to that in the above.
Indeed, using $|e^{it\mathcal L}(z,z')|\lesssim t^{-d}$, we have
\[
|\mathfrak I^\mu_2|\lesssim R \int_{C\mu^{-1/2}|z-z'|}^2 (1+R|t|)^{-N}t^{-d}  dt \lesssim R^d(1+R\mu^{-1/2}|z-z'|)^{-N}.
\]
Combining  this and the estimate for $ \mathfrak I_1^\mu$   verifies \textit{(ii)}.

Finally, to show \textit{(iii)}, we assume that $R<1$ and $|z-z'|\gtrsim \mu^{1/2}$.  
Then, it is easy to see 
$|\eta_{R,j}^{(k)}(t)|\lesssim R^{-1} 2^{(d+k)j}$ for $k\in\N_0.$ By using \eqref{i:1dvphi} and repeated integration by parts as before,  we have
\[ |K_j(z,z')|\lesssim |z-z'|^{-2M} 2^{(d-1-M)j} \]
 for any $M$ if $2^{-j}\ll \mu^{-1/2}|z-z'|$. We decompose $\eta(R^{-1}(\mu-\mathcal L))(z,z')=\mathfrak I_1^\mu+\mathfrak I_2^\mu$. Thus, the above estimate  for  $ |K_j(z,z')|$  gives $|\mathfrak I_1^\mu| \lesssim  (1+\mu^{-1/2}|z-z'|)^{-M-d+1}.$
Since $|\eta_{R, j}|\lesssim |\eta_\ast\psi(2^j \cdot)|$, it follows that
$
|\mathfrak I^\mu_2|\lesssim R \int_{C\mu^{-1/2}|z-z'|}^2 |\eta_\ast(t)| dt \lesssim  (1+\mu^{-1/2}|z-z'|)^{-N}
$
for any $N$. This completes the proof. 
\end{proof}

\section{Proof of Theorem \ref{thm:maxi}}\label{sec:max}

In this section we reduce Theorem \ref{thm:maxi} to showing a  square function estimate. From now on, we identify $\C^d$ with $\R^{2d}$.

\subsection{Square function estimate}
We begin by recalling that 
\begin{align}\label{i:s*}
S_*^\delta(\mathcal L) f(z) \le C \sup_{R >0} \bigg(\frac{1}{R} \int_0^R |S_t^\rho(\mathcal L) f(z)|^2 dt\bigg)^{\frac12}
\end{align}
holds  for $\delta > \rho +1/2 > 0$. 
This was  shown in \cite[pp.278-279]{SW71} (see also \cite[p.13]{CDHLY21}).  
We  make a typical dyadic decomposition on the operator $S_t^\rho(\mathcal L)$.
Let $\phi_*\in \mathrm C^\infty_c((2^{-3}, 2^{-1}))$ {be a non-negative function} such that $\sum_{k\in\Z}\phi_*(2^k t) = 1$ for $t>0$.
For $k\ge 0$ we set 
\[
\phi_k(t)=\begin{cases} (2^kt)_+^\rho\,\phi_*(2^kt), & \quad k\ge 1, 
\\[3pt]
  \sum_{k\le 0} t_+^\rho\,\phi_*(2^k t),  &\quad k= 0. \end{cases}
\]
Decompose
\[
S_t^\rho(\mathcal L)f(z) = \phi_0\big(1-t^{-2}{\mathcal L}\big)f(z) + \sum_{k>0} 2^{-\rho k} \phi_k\big(1-t^{-2}{\mathcal L}\big)f(z).
\]
Substituting this into \eqref{i:s*} and then applying Minkowski's inequality, we get
\begin{align}\label{i:s*s0}
S_*^\delta(\mathcal L) f(z) \le \sum_{k\ge 0} 2^{-\rho k}  \mathfrak S_k f(z)
\end{align}
for $\rho > -1/2$ such that $\delta > \rho + 1/2$, where
\[
\mathfrak S_k f(z) := \sup_{R > 0} \Big(\frac{1}{R} \int_0^R \big|\phi_k\big(1-t^{-2}{\mathcal L}\big) f(z)\big|^2 dt\Big)^{\frac12}.
\]

The proof of Theorem \ref{thm:maxi} reduces to proving the next two propositions.

\begin{prop}\label{prop:main1}
Let $d\ge 1$ and $\alpha\ge 0$.  Then, we have  the estimate 
    \begin{align}\label{i:phi0}
        \int \sup_{R > 0}\big|\phi_0\big(1-R^{-2}{\mathcal L}\big)f(z)\big|^2 \Psi_\alpha(z) dz \le C \int |f(z)|^2 \Psi_\alpha(z) dz. 
    \end{align}
 \end{prop}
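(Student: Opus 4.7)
The key observation is that, because the spectrum of $\mathcal L$ is contained in $[d,\infty)$ and $\phi_0$ vanishes on $(-\infty,2^{-3}]$, the operator $\phi_0(1-R^{-2}\mathcal L)$ is identically zero unless $R\ge R_0:=\sqrt{8d/7}\gtrsim 1$. So the supremum is effectively over $R\ge R_0$, and the kernel of $\phi_0(1-R^{-2}\mathcal L)$ is localized at scale $R^{-1}\le R_0^{-1}\lesssim 1$ uniformly; this locality is what lets us bypass any $A_p$-type hypothesis on $\Psi_\alpha$.

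\textbf{Main estimates.} My plan is to apply Lemma \ref{lem:etapt}(i) with the substitution $\mu_{\mathrm{lem}}=R_{\mathrm{lem}}=R^2$, after identifying $\phi_0(1-\cdot)$ with a function in $C_c^\infty((-2,2))$ on the relevant spectrum. This yields $|K_R(z,z')|\lesssim R^{2d}(1+R|z-z'|)^{-N}$ for any $N$, where $K_R$ denotes the kernel of $\phi_0(1-R^{-2}\mathcal L)$. I would then split the integral $\int|K_R(z,z')||f(z')|dz'$ at $|z-z'|=1$. For $|z-z'|\le 1$, a dyadic decomposition into rings and a geometric summation (convergent for $N>2d$) gives the bound by the local Hardy--Littlewood maximal function $\mathcal M^{\mathrm{loc}}f(z):=\sup_{0<r\le 1}|\mathbb B_d(z,r)|^{-1}\int_{\mathbb B_d(z,r)}|f|$. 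For $|z-z'|>1$, the factor $R^{2d-N}\le 1$ (for $R\ge R_0\gtrsim 1$ and $N$ large) produces the contribution of a tail operator $\mathcal Tf(z):=\int_{|z-z'|>1}|z-z'|^{-N}|f(z')|dz'$. Combined, this gives $|\phi_0(1-R^{-2}\mathcal L)f(z)|\lesssim \mathcal M^{\mathrm{loc}}f(z)+\mathcal Tf(z)$ uniformly in $R\ge R_0$, so the bound survives taking $\sup_R$.

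\textbf{Weighted $L^2$ and obstacle.} It remains to show both $\mathcal M^{\mathrm{loc}}$ and $\mathcal T$ are bounded on $L^2(\Psi_\alpha)$. For $\mathcal M^{\mathrm{loc}}$, I would decompose $f=\sum_j f\chi_{\mathbb A_j}$; since the averaging radius is $\le 1$, $\mathcal M^{\mathrm{loc}}(f\chi_{\mathbb A_j})$ is supported in $\bigcup_{|k-j|\le 1}\mathbb A_k$ where $\Psi_\alpha\sim 2^{-\alpha j}$, so the unweighted $L^2$-boundedness of $\mathcal M^{\mathrm{loc}}$ transfers to the weighted bound with no $A_p$ assumption. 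For $\mathcal T$, Cauchy--Schwarz together with the Schur estimate $\int_{|z-z'|>1}|z-z'|^{-N}\Psi_\alpha(z)\,dz\lesssim \Psi_\alpha(z')$---verified by splitting into $|z|\ge |z'|/2$ (where $\Psi_\alpha(z)\lesssim\Psi_\alpha(z')$) and $|z|<|z'|/2$ (where $|z-z'|\ge |z'|/2$ produces extra decay), with $N$ large enough depending on $\alpha$---gives the desired bound. The chief technical subtlety is the kernel estimate for small $|z-z'|\lesssim R^{-1}$, which Lemma \ref{lem:etapt}(i) does not explicitly cover; I expect to handle this either by extracting a trivial uniform bound $|K_R|\lesssim R^{2d}$ from the same integration-by-parts argument, or by splitting the operator at $|z-z'|\sim R^{-1}$ into near and far pieces and treating the near part via its uniform $L^2$-boundedness together with a local Fefferman--Stein inequality exploiting that $\Psi_\alpha$ is slowly varying on unit balls.
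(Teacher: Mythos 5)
Your overall strategy matches the paper's quite closely: reduce $\phi_0(1-R^{-2}\mathcal L)$ to a convolution-type dominant $\Phi_R\ast|f|$ with $\Phi_R(z)=R^{2d}(1+R|z|)^{-N}$ via Lemma~\ref{lem:etapt}(i), split into a local piece controlled by a maximal function and a tail piece controlled by a Schur test, and use the locality of the kernel (scale $\lesssim 1$ uniformly in $R\ge 1$) plus the slow variation of $\Psi_\alpha$ on unit balls to transfer unweighted $L^2$ bounds to $L^2(\Psi_\alpha)$ without any $A_p$ hypothesis. That is exactly the paper's structure, and your treatment of the local maximal part and the Schur tail is correct.

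The gap is precisely where you flag it, and neither of your proposed fixes closes it. Your option~(a) does not work: when $|z-z'|\lesssim R^{-1}$ the phase $\mu t+\phi_{\mathcal L}(t,z,z')$ no longer has the lower bound $|\partial_t(\cdot)|\sim 2^{2j}|z-z'|^2$ that drives the integration by parts, and near $t=0$ the factor $(\sin t)^{-d}$ is genuinely singular; in fact the bound \eqref{see1} for $\mathfrak I_2^R$ contains an explicit factor $|z-z'|^{-d}$ that blows up as $|z-z'|\to0$, so the ``same integration-by-parts argument'' cannot deliver $|K_R|\lesssim R^{2d}$ on the diagonal. Your option~(b) risks circularity (the near-piece maximal operator is essentially the object you are trying to bound) and the Fefferman--Stein route is not developed. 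What actually closes the gap in the paper is entirely different and simpler: the $L^1\to L^\infty$ bound $\|\mathcal P_\mu\|_{L^1\to L^\infty}\lesssim\mu^{d-1}$ for the spectral projections, summed over the $\lesssim R^2$ eigenvalues $\mu\le R^2$ in the support of $\phi_0(1-\cdot/R^2)$, yields $\|\phi_0(1-R^{-2}\mathcal L)\|_{L^1\to L^\infty}\lesssim R^{2d}$ and hence the uniform pointwise kernel bound $|K_R(z,z')|\lesssim R^{2d}$ on the diagonal, which combines with Lemma~\ref{lem:etapt}(i) off the diagonal to give \eqref{e0:sepa} for all $(z,z')$. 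You should replace your speculative options (a) and (b) with this projection-norm argument.
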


\begin{prop}\label{prop:main2} 
    Let  $d\ge 1$, $\alpha \ge 0$, and  $k\ge 1$. Then, for any $\ep > 0$  we have  
    \begin{align}\label{i:sk}
        \int \int \big|\phi_k \big(1-t^{-2}{\mathcal L}\big) f(z)\big|^2 \frac{dt}{t} \Psi_\alpha(z) dz \le C\,2^{(\ep-1)k} \mathbf B_\alpha(k)  \int |f(z)|^2 \Psi_\alpha(z) dz, 
    \end{align}
    where
\[ 
    \mathbf B_\alpha(k) = \begin{cases}
        \quad 1, &  \quad 0\le \alpha\le 1, \\
        2^{\frac{\alpha-1}{2}k}, & \quad \alpha > 1.
    \end{cases}\]
\end{prop}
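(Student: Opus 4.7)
The strategy combines three ingredients: the one-dimensional $t$-orthogonality $\int|\phi_k(1-t^{-2}\mu)|^2\,dt/t\lesssim 2^{-k}$ (producing the factor $2^{-k}$), the trace inequality of Corollary~\ref{lem:trace1} (producing the factor $\mathbf B_\alpha(k)$), and the off-diagonal decay of Lemma~\ref{lem:etapt} (to absorb the weight $\Psi_\alpha$). The starting observation is the identity
\[
\phi_k\bigl(1-t^{-2}\mathcal L\bigr)=\eta\bigl((t^2-\mathcal L)/R\bigr),\qquad R=t^2 2^{-k},
\]
where $\eta(v):=v^\rho\phi_*(v)$ is a fixed bump on $(1/8,1/2)$; in particular, Lemma~\ref{lem:etapt} applies with $\mu=t^2$ and Corollary~\ref{lem:trace1} applies with $\sigma=R$.

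I would first make a dyadic decomposition $(0,\infty)=\bigsqcup_j I_j$ with $I_j=[2^{(j-1)/2},2^{j/2})$, so that $t^2\sim 2^j$, $R\sim 2^{j-k}$, and the kernel of $T_t:=\phi_k(1-t^{-2}\mathcal L)$ is essentially supported in the diagonal region $\{|z-z'|\lesssim\rho_j\}$ with $\rho_j:=\min(2^{j/2},2^{k-j/2})$ (cases (iii) and (ii) of Lemma~\ref{lem:etapt} for $j\le k$ and $j>k$, respectively). For each $j$ the goal is a localized square function bound of the shape
\[
\int_{I_j}\!\int_{\mathbb B_d(z_0,M)}\!|T_tf|^2\,dz\,\tfrac{dt}{t}\,\lesssim\, 2^{-k}\,M\,\max(1,2^{j-k})\,2^{-j/2}\|\chi_{\mathbb B_d(z_0,M+C\rho_j)}f\|_2^2+\text{neg.},
\]
valid for balls centered at arbitrary $z_0\in\C^d$. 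The $2^{-k}$ factor comes from the $t$-orthogonality (via $\int_{I_j}\|T_tf\|_2^2\,dt/t\lesssim 2^{-k}\sum_{\mu\sim 2^j}\|\mathcal P_\mu f\|_2^2$), the remaining factor $M\max(1,2^{j-k})\,2^{-j/2}$ is the output of Corollary~\ref{lem:trace1} (transferred to off-origin centers via the magnetic-translation covariance of $\mathcal L$), and the localization of $f$ to $\mathbb B_d(z_0,M+C\rho_j)$ is a consequence of the rapid off-diagonal decay in Lemma~\ref{lem:etapt}.

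Applying the annular decomposition $\Psi_\alpha=\sum_{\ell\ge 0}2^{-\alpha\ell}\chi_{\mathbb A_\ell}$, I would cover each $\mathbb A_\ell$ by disjoint balls of radius $\rho_j$, apply the localized bound on each, and sum. Since $\Psi_\alpha$ is essentially constant on each such ball, the $L^2$ contributions assemble into $\|f\|_{L^2(\Psi_\alpha)}^2$ at the cost of a weight-mismatch factor of order $\rho_j^{\max(\alpha-1,0)}$ (not $\rho_j^\alpha$, thanks to the explicit $M$-dependence of the trace factor). Summing over $j$, since $\rho_j$ is maximized at $j=k$ with $\rho_k\sim 2^{k/2}$, yields $\sum_j\rho_j^{\max(\alpha-1,0)}\lesssim (k+1)\,2^{\max(\alpha-1,0)k/2}$, with the logarithmic $k$-factor absorbed into the $2^{\ep k}$. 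This produces the claimed bound $2^{(\ep-1)k}\mathbf B_\alpha(k)$.

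The hardest step will be the localized square function bound. A naive pointwise Cauchy--Schwarz $|T_tf(z)|^2\le N_j\sum_\mu|\phi_k(1-t^{-2}\mu)|^2|\mathcal P_\mu f(z)|^2$ introduces a spurious factor $N_j=\max(1,2^{j-k})$ which is fatal for $j\gg k$. The correct approach is to use the $L^2$ Parseval identity for the $\mathcal P_\mu$'s together with the trace inequality at the level of individual $\mathcal P_\mu$ on the ball of radius $\rho_j$, and then restore the spatial cutoff via the kernel decay of Lemma~\ref{lem:etapt}. A secondary delicate point is the regime $2^\ell\sim\rho_j$, where the trace factor and the weight mismatch are both competitive and careful balancing is needed to obtain the exponent $\max(\alpha-1,0)$ rather than $\alpha$.
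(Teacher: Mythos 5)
Your plan contains the right raw ingredients---Corollary~\ref{lem:trace1}, the kernel decay of Lemma~\ref{lem:etapt}, and the $t$-orthogonality at scale $R\sim t^2 2^{-k}$---and the identification of the kernel scale $\rho_j=\min(2^{j/2},2^{k-j/2})$ (from cases $(ii)$ and $(iii)$ of Lemma~\ref{lem:etapt}) is correct. The magnetic-translation transfer of Lemma~\ref{lem:trace} is also a valid observation, though the paper never needs it: it only ever applies the trace lemma on the origin-centered balls $\mathbb B_d(2^\ell)$ that contain the annuli $\mathbb A_\ell$. However, the way you assemble the per-$j$ pieces has a real gap.

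The per-$j$ bound obtained by covering $\C^d$ by balls of radius $\rho_j$ and summing the localized contributions is not $\lesssim 2^{-k}\rho_j^{\max(\alpha-1,0)}\|f\|_{L^2(\Psi_\alpha)}^2$. The factor $\rho_j^{\alpha-1}$ (for $\alpha>1$) does come out of the \emph{near-origin} balls via the $M$-dependence of Corollary~\ref{lem:trace1}, exactly as you observe. But the balls $B_i$ with $|z_i|\gtrsim\rho_j$---where $\Psi_\alpha$ is locally constant and the trace factor $M\max(1,R)\mu^{-1/2}$ at the optimal scale $M\sim\rho_j$ is of order one---already contribute $\sim 2^{-k}\|f\|_{L^2(\Psi_\alpha)}^2$ per $j$, with no decay in $j$. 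Since $j$ ranges over all dyadic scales, summing this over $j$ diverges. In particular for $j>2k$ (so $\rho_j<1$) the claimed per-$j$ factor $\rho_j^{\alpha-1}<1$ is unattainable: the kernel concentrates at sub-unit scale, Corollary~\ref{lem:trace1} (which needs $M\ge 1$) gives no gain, and the best available localized bound from $t$-orthogonality is $2^{-k}\|\chi_{CB_i}f\|_2^2$. The same problem surfaces for $\alpha\le 1$: your $\sum_j\rho_j^0\lesssim k+1$ silently truncates $j\lesssim k$, whereas the actual range is infinite. The root cause is that the spatial localization ``$f\to\chi_{CB_i}f$'' (which you need to trade $\|\cdot\|_2$ for $\|\cdot\|_{L^2(\Psi_\alpha)}$) destroys the spectral localization of $f$ to $\mu\sim 2^j$ that powers the summability in $j$; the two cutoffs do not commute. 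The paper avoids this by keeping the regimes apart: the trace lemma at scale $2^\ell$ is used only on the finitely many frequency scales near $t^2\lesssim 2^{(1+\ep)k}$ (giving the geometric sum $\sum 2^{(\alpha-1)\nu}$), while the far-from-origin and all high-frequency scales are handled by decomposing $f$ over $j$-\emph{independent} annuli $\chi_{\mathbb A_{j'}}f$, summing the frequency parameter \emph{first} via the pointwise bounded overlap $\sum_{\nu,\ell}\phi_{\nu,\ell}^2(t,s)\lesssim 1$, and only then summing over annuli. Finally, the case $0<\alpha\le 1$ is reduced by interpolation with $\alpha=0$ (pure $L^2$ orthogonality, no covering), which your direct route cannot reproduce.
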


Once we have the estimates \eqref{i:phi0} and \eqref{i:sk}, the proof of Theorem \ref{thm:maxi} is rather straightforward. 

\begin{proof}[Proof of Theorem \ref{thm:maxi}]
Choose an 
$\ep>0$ such  that $\delta > 2\ep + \max((\alpha-1)/4,0)$ and set  $\rho=\ep+ \max((\alpha-1)/4,0)-1/2$.  By \eqref{i:s*s0}, it suffices to show that the operator  
\[\textstyle \sum_{k\ge 0} 2^{-\rho k}  \mathfrak S_k\]  is bounded on $L^2(\R^{2d}, \Psi_\alpha)$.
Since $\mathfrak S_0 f\le \sup_R |\phi_0(1-R^{-2}\mathcal L)f|$, by \eqref{i:phi0} it follows that $
    \|\mathfrak S_0  f\|_{L^2(\R^{2d},\Psi_\alpha)}\lesssim \|f\|_{L^2(\R^{2d},\Psi_\alpha)}. $  For $k\ge 1$, it is clear that $(\mathfrak S_k f(z))^2\le \int |\phi_k (1-t^{-2}{\mathcal L}) f(z)|^2 \frac{dt}{t} $. Thus,  using \eqref{i:sk}  we obtain 
\[
 \| \sum_{k\ge 1} 2^{-\rho k}  \mathfrak S_kf  \|_{L^2(\Psi_\alpha)}  \lesssim   \sum_{k\ge 1} 2^{(\max(\frac{\alpha-1}4,0)-\frac12+ \frac\epsilon 2-\rho) k}   \|f\|_{L^2(\Psi_\alpha)}
 \lesssim \|f\|_{L^2(\Psi_\alpha)}.
\]
This completes the proof. 
\end{proof}

We prove Proposition \ref{prop:main1} for the rest of this section, while  Proposition \ref{prop:main2} is to be proved in  the next section.

\subsection{Proof of Proposition \ref{prop:main1}}
We start by showing  the  estimate
\begin{align}\label{i:ptbdp0}
\big|\phi_0 \big(1-R^{-2}{\mathcal L}\big) f(z)\big|\le C \int R^{2d}(1+R|z-z'|)^{-N} |f(z')| dz'
\end{align}
for $R\ge1$ and any $N\ge0$. 
To this end, it is enough to prove 
\begin{equation}\label{e0:sepa}
 \Big|\phi_0 \big(1-R^{-2}{\mathcal L}\big) (z,z')\Big| \le C R^{2d}(1+R|z-z'|)^{-N}.
 \end{equation}
 Recall that $\supp  \phi_0 \subset (2^{-3}, \infty)$ and let $\kappa$ be a smooth function on $\R$ such that $\supp\kappa\subset (-\infty,2)$ and $\kappa\equiv 1$ on $(-\infty,1]$.  Since  the eigenvalues of  $\mathcal L\ge 1$,  we have $\phi_0(1-R^{-2}\mathcal L) = (\phi_0\kappa)(1-R^{-2}\mathcal L)$ for $R>0$.
Applying the estimate \eqref{1} in Lemma \ref{lem:etapt} with $\eta= \phi_0\kappa$ and $R$, $\mu$ replaced by $R^2$ respectively, we get \eqref{e0:sepa} when $|z-z'|\gtrsim R^{-1}$. 
 For  the other case $|z-z'|\lesssim R^{-1}$, 
the required bound 
\eqref{e0:sepa} follows once we show
\[
\|\phi_0 \big(1-R^{-2}{\mathcal L}\big) \|_{L^1\to L^\infty}\lesssim R^{2d}. 
\]
This is an easy consequence of the estimate $\|\mathcal P_{\mu}\|_{L^1\to L^\infty}\le C \mu^{d-1}$, $\mu\in 2\N_0+d$ (see, for example, \cite{JLR22}). Indeed, this estimate and the triangle inequality give
$
\|\phi_0 \big(1-R^{-2}{\mathcal L}\big)\|_{L^1\to L^\infty}\le \textstyle  \sum_{\mu \in 2\N_0+d: \mu\le R^2}
\|\phi_0\|_\infty\|\mathcal P_{\mu}\|_{L^1\to L^\infty} \le C R^{2d}. 
$

We now proceed to prove \eqref{i:phi0}. Since the spectrum of $\mathcal L$ is contained in $[d,\infty)$, $\phi_0(1-R^{-2}\mathcal L) \equiv 0$ if $R\le 1$. So, the supremum in \eqref{i:phi0} can be replaced by supremum over $R\ge 1$. By replacing $f$ with $\Psi_\alpha^{-1/2} f$, the estimate   \eqref{i:phi0} is equivalent to
\[
    \int \sup_{R\ge 1}\big|\phi_0 \big(1-R^{-2}{\mathcal L}\big) (\Psi_\alpha^{-1/2} f)(z)\big|^2 \Psi_\alpha(z) dz \le C \int |f(z)|^2 dz. 
\]
Choose $N$ such that $N>\alpha + 100d$.
Let us set $\Phi(z) = (1+|z|)^{-N}$ and $\Phi_R(z)= R^{2d}\Phi(Rz)$.
By  \eqref{i:ptbdp0}, the matter is now reduced to showing 
\begin{align}\label{i:p01}
\int \sup_{R\ge 1} \big|\Phi_R \ast (\Psi_\alpha^{-1/2}|f|)(z)\big|^2\,\Psi_\alpha(z) dz \lesssim \int |f(z)|^2 dz.
\end{align}
Recall that $\Psi_\alpha = \sum_{j\ge 0}2^{-\alpha j}\chi_{\mathbb A_j}$  and set  
\begin{align*}
    \mathcal V_1 &= \sum_{j\ge 0} 2^{-\alpha j} \int_{\mathbb A_j} \sup_{R\ge 1} \Big|\Phi_R \ast \Big(\sum_{j' : |j-j'|\le 2} 2^{\frac{\alpha}{2}j'}\chi_{\mathbb A_{j'}}|f|\Big)(z)\Big|^2 dz, \\
    \mathcal V_2 &= \sum_{j\ge 0} 2^{-\alpha j} \int_{\mathbb A_j} \sup_{R\ge 1} \Big|\Phi_R \ast \Big(\sum_{j' : |j-j'|\ge 3} 2^{\frac{\alpha}{2}j'}\chi_{\mathbb A_{j'}}|f|\Big)(z)\Big|^2 dz.
\end{align*}
Clearly, the left hand side of \eqref{i:p01} is bounded by a constant times $\mathcal V_1 + \mathcal V_2$. 
Thus,  \eqref{i:p01} follows if we show 
\Be 
\label{i:p011}
\mathcal V_k  \lesssim \int |f(z)|^2 dz, \quad k=1,2. 
\Ee

We first  consider \eqref{i:p011} for $k=1$, which is easy to show. Indeed,  
\begin{align*}
\mathcal V_1 \lesssim   \sum_{|n|\le3}\sum_{j\ge 0} \, \int_{\mathbb A_j}\sup_{R\ge 1}(\Phi_R\ast(\chi_{\mathbb A_{j+n}}|f|)(z))^2 dz \lesssim \int  (\sup_{R>0}  \Phi_R\ast |f|(z))^2 dz.
\end{align*}
Since $\sup_{R>0}  \Phi_R\ast |f|(z)$ is  bounded by the Hardy-Littlewood maximal function, 
\eqref{i:p011} for $k=1$ follows.

We now consider \eqref{i:p011} for $k=2$.
Observe that 
\[
\mathcal V_2 \le \sum_{j\ge 0} 2^{-\alpha j} \int_{\mathbb A_j} \Big(\int \big(\sup_{R\ge 1} \Phi_R(z-z') \big) \big(\sum_{j' : |j-j'|\ge 3} 2^{\frac{\alpha}{2}j'}\chi_{\mathbb A_{j'}}|f|\big)(z')dz'\Big)^2 dz.
\]
Since $|j-j'|\ge 3$,  we have $|z-z'|\ge 1$ in the integral. Thus,  $\sup_{R\ge 1} \Phi_R(z-z') \le (1+|z-z'|)^{-{N+2d}}$.
Consequently, it follows that 
\[
\mathcal V_2 \lesssim  \int \Big(\int  \mathfrak K(z,z')  |  f(z')| dz' \Big)^2 dz,
\]
where
\[
\mathfrak K(z,z') = \chi_{\mathcal A}(z,z')(1+|z|)^{-\frac{\alpha}{2}}(1+|z'|)^{\frac{\alpha}{2}}(1+|z-z'|)^{-N+2d}
\]
and $\mathcal A=\{(z,z')\in\R^{2d}\times \R^{2d}: |z-z'|\ge 2^{-5} \max\{|z|,|z'|\}\}$.
Since $N>\alpha + 100d$, a simple calculation shows that 
$
\sup_z \int \mathfrak K (z,z') dz', \sup_{z'} \int \mathfrak K (z,z') dz <C
$
for a constant $C>0$.  By Young's inequality, \eqref{i:p011} for $k=2$ follows.

\section{Proof of square function estimates: Proof of Proposition \ref{prop:main2}}\label{sec:sq}
In the section, we show the square function estimate
\begin{align}\label{i:sq1}
  \mathfrak {T}_k  :=  
\int \int_1^\infty \Big|\phi_k \big(1-t^{-2}{\mathcal L}\big)(\Psi_{\alpha}^{-1/2} f)(z)\Big|^2 \Psi_\alpha(z) \frac{dtdz}{t} \lesssim 2^{(\ep-1)k} \mathbf B_\alpha(k)  \|f\|_2, 
\end{align}
which is equivalent to   \eqref{i:sk},  since  $\phi_k(1-t^{-2}\mathcal L)=0$ for $k\ge 1$ if $t\le 1$.  As to be seen below, the estimate \eqref{i:sq1} is easy to show  for $\alpha=0$  using orthogonality  (see the paragraph containing the inequality \eqref{e:hunif}). Consequently, by means of interpolation 
it is sufficient to show \eqref{i:sq1} for $\alpha>1$.

\subsection{Decomposition in $t$}
In order to prove \eqref{i:sq1}, we break the integral using the cutoff function $\phi_*$. So, we have  
\[
 \mathfrak {T}_k = \sum_{\nu\ge 2}\int \phi_*(2^{-\nu} t)\int \Big|\phi_k \big(1-t^{-2}{\mathcal L}\big)(\Psi_{\alpha}^{-1/2} f)(z)\Big|^2 \Psi_\alpha(z) dz\,\frac{dt}{t}.
\]
The spectral support of $\phi_k(1-t^{-2}\mathcal L)$ is contained in the interval $[t^2(1-2^{-k-1}), t^2(1-2^{-k-3})]$, which 
has length about $2^{2\nu}2^{-k}$ for  $t\in \supp{\phi_*(2^{-\nu}\cdot)}$. To exploit disjointness of spectral supports,  
we further break the integral in $t$  so that the spectral supports of the integrands are confined to  intervals of length about $2^{2\nu}2^{-k}$.

For the purpose, let  $\varphi\in \mathrm C_c^\infty((-3/4,3/4))$ such that  $\varphi\equiv 1$ on $[-1/4,1/4]$ and $\sum_{\ell\in\Z} \varphi(t-\ell) = 1$ for $t\in\R$. 
Thus,  $\mathfrak T_k$ is  equal to 
\[
\sum_{\nu\ge 2}\sum_{\ell\in\Z}\int \varphi\Big(\frac{t-2^{\nu-k}\,\ell}{2^{\nu-k}}\Big)\phi_*(2^{-\nu}t)\int \Big|\Psi_\alpha^{1/2}(z)\phi_k\big(1-t^{-2}{\mathcal L}\big)(\Psi_\alpha^{-1/2} f)(z)\Big|^2dz\,\frac{dt}{t}.
\]
Changing variables $t\to 2^{\nu-k}(t+\ell)$ yields 
\[
\mathfrak T_k= 2^{-k} \sum_{ \nu \ge 2}\,\sum_{\ell\in\Z} \int\varphi(t)\phi_*\Big(\frac{t+\ell}{2^{k}}\Big) \int |\Psi_\alpha^{1/2}(z)\phi_{\nu,\ell}(t, \mathcal L)(\Psi_\alpha^{-1/2} f)(z)|^2 dz\,\frac{2^{k}dt}{(t+\ell)},
\]
where \[
\phi_{\nu,\ell}(t,s) = \phi_k \Big(1-\frac{2^{2(k-\nu)} s}{(t+\ell)^2}\Big).
\]

We observe that the function $s\mapsto \phi_{\nu,\ell}(t,s)$ vanishes  for any $t\in\supp{\varphi}$ if $s$ is outside an interval of width $\sim 2^{2\nu}2^{-k}$, and the sets $\{\supp{\phi_{\nu,\ell}(t,\cdot)}\}_{\ell\in\Z}$ 
are boundedly overlapping. If the integral is nonzero,  $2^{-k}(t+\ell)\sim 1$ on $\supp \phi_*$. So, we have  
\[
\mathfrak T_k \lesssim 2^{-k}\,\sup_{t\in \supp{\varphi}}\, \sum_{ \nu \ge 2}\,\sum_{\ell\in L^k_{t} } \int |\Psi_\alpha^{1/2}(z)\phi_{\nu,\ell}(t, \mathcal L)(\Psi_\alpha^{-1/2} f)(z)|^2 dz,
\]
where 
\[ L^k_{t}=\{ {\ell}: 1/8\le 2^{-k}(t+ \ell)\le 1\}.\]
The estimate \eqref{i:sq1} for $\alpha=0$ follows since the supports of $\{ \phi_{\nu,\ell}(t, \cdot): \nu, \ell\}$ are boundedly overlapping. 
Hence,  the estimate \eqref{i:sq1} for $\alpha\in (0,1)$  follows by interpolation once we  have 
\begin{align}\label{e:hunif}
   \sum_{ \nu \ge 2}\, \sum_{\ell\in L^k_{t} }\int |\Psi_\alpha^{1/2}(z)\phi_{\nu,\ell}(t, \mathcal L)(\Psi_\alpha^{-1/2} f)(z)|^2 dz \le C2^{(\frac{\alpha-1}{2}+\ep)k} \|f\|_2, \quad \alpha>1
\end{align}
for a  constant $C>0$ whenever  $t\in\supp{\varphi}$.  

The rest of this section is devoted to showing \eqref{e:hunif}. 

\subsection{Low, middle, and high frequency parts}
To show \eqref{e:hunif}, we split the left hand side of \eqref{e:hunif} into three parts: 
\begin{align*}
    \mathfrak I^l (t) &:= \sum_{ 2\le\nu\le k/2}\, \sum_{\ell\in L^k_{t} }\int |\Psi_\alpha^{1/2}(z)\phi_{\nu,\ell}(t,\mathcal L)(\Psi_\alpha^{-1/2} f)(z)|^2 dz, 
    \\
    \mathfrak I^m  (t)  &:= \sum_{ k/2< \nu\le k}\, \sum_{\ell\in L^k_{t} }\int |\Psi_\alpha^{1/2}(z)\phi_{\nu,\ell}(t,\mathcal L)(\Psi_\alpha^{-1/2} f)(z)|^2 dz,
     \\
    \mathfrak I^h  (t)  &:= \sum_{ k< \nu}\, \sum_{\ell\in L^k_{t} }\int |\Psi_\alpha^{1/2}(z)\phi_{\nu,\ell}(t,\mathcal L)(\Psi_\alpha^{-1/2} f)(z)|^2 dz.
\end{align*}
We refer to  $\mathfrak I^l, \mathfrak I^m$, and $\mathfrak I^h$ as  the low, middle, and high frequency parts, respectively.
For those operators, we prove the next, from which  the desired estimate \eqref{e:hunif} follows immediately. 
\begin{prop}\label{prop:lmh}
    Let  $\alpha > 1$ and $t\in \supp \varphi$.  Suppose $ \|f\|_2\le 1$.   Then, for any $\epsilon>0$  there is a constant $C = C(d, \alpha, \ep)$ such that 
    \begin{align}\label{e:low}
        \mathfrak I^l(t) &\le C2^{(\frac{\alpha-1}{2}+\ep)k}, \\\label{e:mid}
        \mathfrak I^m(t) &\le C2^{(\frac{\alpha-1}{2}+\ep)k}, \\\label{e:high}
        \mathfrak I^h (t) &\le C2^{\ep k}.
    \end{align}
\end{prop}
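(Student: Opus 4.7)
The plan is to prove each of \eqref{e:low}, \eqref{e:mid} and \eqref{e:high} by combining three tools: the kernel decay of $T_{\nu,\ell}:=\phi_{\nu,\ell}(t,\mathcal L)$ from Lemma \ref{lem:etapt}, the local $L^2$ bound of Corollary \ref{lem:trace1}, and the almost-orthogonality $\sum_{\ell\in L^k_t}\|T_{\nu,\ell}g\|_2^2\lesssim \|g\|_2^2$ coming from the bounded overlap of the supports of $\phi_{\nu,\ell}(t,\cdot)$, each of which is an interval of length $\sigma_\nu\sim 2^{2\nu-k}$ centered near $\mu_\nu\sim 2^{2\nu}$. Lemma \ref{lem:etapt} applies with $R=\sigma_\nu$ and gives the characteristic kernel scale $r_\nu=2^{\nu}$ in the low regime (case (iii), $\sigma_\nu\le 1$) and $r_\nu=2^{k-\nu}$ in the middle and high regimes (case (ii), $1\le \sigma_\nu<\mu_\nu$); outside $|z-z'|\gtrsim r_\nu$ the kernel decays faster than any power of $|z-z'|/r_\nu$. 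Corollary \ref{lem:trace1} in parallel furnishes $\|\chi_{\mathbb B_d(M)}T_{\nu,\ell}\|_{2\to 2}^2\lesssim \max(1,\sigma_\nu)\,M\,2^{-\nu}$ uniformly in $\ell$ and $t\in\supp\varphi$.

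The main step is a dyadic decomposition in both the integration variable and the input. Writing $f=\sum_m f\chi_{\mathbb A_m}$ so that $\Psi_\alpha^{-1/2}f\sim 2^{\alpha m/2}f\chi_{\mathbb A_m}$ on $\mathbb A_m$, and expanding $\int \Psi_\alpha|T_{\nu,\ell}(\cdot)|^2=\sum_j 2^{-\alpha j}\int_{\mathbb A_j}|T_{\nu,\ell}(\cdot)|^2$, each left-hand side becomes a double sum over pairs of annuli $(j,m)$. For $|j-m|>C_0$ the integration is supported where $|z-z'|\gtrsim 2^{\max(j,m)}\gg r_\nu$, so Lemma \ref{lem:etapt} bounds $\|\chi_{\mathbb A_j}T_{\nu,\ell}\chi_{\mathbb A_m}\|_{2\to 2}$ by $(r_\nu\,2^{-\max(j,m)})^N$ for any $N$, overwhelming the polynomial weight mismatch $2^{\alpha|m-j|}$ and leaving a harmless remainder after summation in all indices. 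For $|j-m|\le C_0$ the weight mismatch is $O(1)$; here $\chi_{\mathbb A_j}\le \chi_{\mathbb B_d(2^{j+1})}$ together with Corollary \ref{lem:trace1} gives $\|\chi_{\mathbb A_j}T_{\nu,\ell}\|_{2\to 2}^2\lesssim \max(1,\sigma_\nu)\,2^{j-\nu}$, and summation in $\ell$ is handled by the almost-orthogonality $\sum_\ell\|T_{\nu,\ell}(f\chi_{\mathbb A_m})\|_2^2\lesssim \|f\chi_{\mathbb A_m}\|_2^2$, which trades the cardinality $|L^k_t|\sim 2^k$ for an average-type gain.

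Summing in $j$ and $\nu$ then concludes each regime. The weight assumption $\alpha>1$ enters precisely through $\sum_{j\ge 0}2^{-(\alpha-1)j}<\infty$, which converts the linear-in-$M$ trace bound into a convergent sum along the annulus decomposition; the scales $r_\nu$ and $\sigma_\nu$ in each regime then yield the advertised exponents, namely $2^{((\alpha-1)/2+\ep)k}$ in the low and middle regimes, and the smaller $2^{\ep k}$ in the high regime, where $r_\nu<1$ makes the kernel so concentrated that the weight is essentially constant across its effective support and the product $\max(1,\sigma_\nu)2^{-\nu}=2^{\nu-k}$ decays geometrically in $\nu>k$. The $\ep$-loss only absorbs the $O(k)$ factor coming from summing over the logarithmically many $\nu$'s in the low and middle regimes. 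The main obstacle is the middle regime, where the kernel scale $r_\nu=2^{k-\nu}$ shrinks while the spectral width $\sigma_\nu=2^{2\nu-k}$ grows as $\nu$ increases in $(k/2,k]$, and one has to balance these opposing scales carefully while ensuring that the $\ell$-orthogonality gain survives the spatial annulus decomposition.
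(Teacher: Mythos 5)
The overall toolkit you identify (Lemma \ref{lem:etapt} for kernel decay, Corollary \ref{lem:trace1} for local $L^2$, and almost-orthogonality in $\ell$) is the same one the paper uses, but the near-diagonal/far-diagonal split you propose is not the right one and the proof does not close with it. You cut at a \emph{fixed} threshold $|j-m|\le C_0$ and claim that $|j-m|>C_0$ forces $|z-z'|\gtrsim 2^{\max(j,m)}\gg r_\nu$; this is false whenever $\max(j,m)$ is small relative to $\log_2 r_\nu$. Concretely, in the low regime take $\nu$ close to $k/2$ and the pair $(j,m)=(0,\nu)$: then $|j-m|=\nu\gg C_0$, yet $\dist(\mathbb A_0,\mathbb A_\nu)\sim 2^\nu\sim r_\nu$, so case $(iii)$ of Lemma \ref{lem:etapt} yields no decay at all. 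These pairs are not a harmless remainder; with weight mismatch $2^{\alpha\nu}$ and trace-lemma gain $2^{-\nu}$ they contribute $\sum_{\nu\le k/2}2^{(\alpha-1)\nu}\sim 2^{(\alpha-1)k/2}$, which is the dominant term in \eqref{e:low}. Your "far-diagonal" estimate simply does not apply to them.

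The resolution (which is what the paper does) is to make the near set scale-adapted: the near/far cut must be by physical distance $\dist(\mathbb A_j,\mathbb A_{j'})\lessgtr 2^{\nu+\ep k}$ in the low regime (and $\lessgtr 2^{(1+\ep)k-\nu}$ in the middle/high regimes), not by a fixed gap in the annulus index. This produces a near set $U^l_j$ that for small $j$ contains many $j'$ ranging up to $\approx\nu+\ep k$. Within this physically-near set one must further split by the size of the \emph{output} annulus: for $j\lesssim\nu+\ep k$ the trace lemma gives the crucial factor $2^{(1-\alpha)j}$ (because the input $F_j=\sum_{j'\in U^l_j}2^{\alpha j'/2}\chi_{\mathbb A_{j'}}f$ carries the weight and $\|F_j\|_2^2\lesssim 2^{\alpha(\nu+\ep k)}$), while for $j\gtrsim\nu+\ep k$ only $|j'-j|\le 1$ can be physically near, the weights cancel, and one must use almost-orthogonality alone — applying your trace bound $\|\chi_{\mathbb A_j}T_{\nu,\ell}\|_{2\to 2}^2\lesssim 2^{j-\nu}$ uniformly in $j$ within $|j-m|\le C_0$ would give the divergent sum $\sum_j 2^{j-\nu}\|\chi_{\mathbb A_j}f\|_2^2$. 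So both halves of your two-way split have genuine gaps, and the three-part decomposition ($\mathfrak I^l_1,\mathfrak I^l_2,\mathfrak I^l_3$, and analogously for $m,h$) is essential.
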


Before proceeding to prove  Proposition \ref{prop:lmh},  we explain why we separately consider  $  \mathfrak I^l$, $  \mathfrak I^m$, and $  \mathfrak I^h$. 
Setting  $\mu=2^{-2k}(t+\ell)^2  2^{2\nu}$ and $R=2^{-2k}(t+\ell)^2 2^{2\nu-k}$, note that 
\Be 
\label{phinl}
 \phi_{\nu,\ell}(t,\mathcal L)= \phi\big((\mu-\mathcal L)/{R}\big).
 \Ee
 Since $2^{-k}(t+\ell)\sim 1$,  we have  $\mu\sim 2^{2\nu}$ and $R\sim 2^{2\nu-k}$.
 From \textit{(ii)} and \textit{(iii)} in Lemma \ref{lem:etapt}, we notice that the kernel of the operator $\phi_{\nu,\ell}(t,\mathcal L)$ changes its behavior around  $R = 1$, that is, $2^\nu= 2^{k/2}$. 
So, it is natural to distinguish  the low frequency part ($\nu\le k/2$) and the other part ($\nu> k/2)$.  Moreover, if  $\nu\ge k$,  Lemma \ref{lem:etapt} \textit{(ii)} shows that that  the kernel $\phi_{\nu,\ell}(t,\mathcal L)(z,z')$ is essentially localized to  $1$-neighborhood of the diagonal $\{(z,z'): z=z'\}$. In such a case, the weight $\Psi_\alpha^{1/2}$ can be handled easily. Thus, we additionally 
divide the part $\nu> k/2$ into the mid  frequency part ($k\ge \nu> k/2$) and high frequency part ($\nu> k$). 

\subsection{Proof of Proposition \ref{prop:lmh}}

We  show \eqref{e:low} first. 
For $j\ge 0$,   denote   
\begin{align*}
 U_j^l &= \{j'\in\N_0 :  \dist{(\mathbb A_j, \mathbb A_{j'})}
\le 2^{\nu+\ep k}\}.
\end{align*}
Recalling $\Psi^{1/2}_\alpha = \sum_{j\ge 0}2^{-\frac{ \alpha}{2}j}\chi_{\mathbb A_j}$, we decompose $\mathfrak I^l$ to get
\[
\mathfrak I^l \lesssim \mathfrak I^l_1 + \mathfrak I^l_2 + \mathfrak I^l_3,
\]
where
\begin{align*}
    \mathfrak I^l_1 &:= \sum_{ 2\le \nu \le k/2}\, \sum_{\ell\in L^k_{t} }\sum_{j\le  \nu+\ep k+3 }\int_{\mathbb A_j} \Big|\phi_{\nu,\ell}(t,\mathcal L)\Big(\sum_{j'\in  U_j^l}2^{\frac{\alpha}{2}(j'-j)}\chi_{\mathbb A_{j'}} f\Big)(z)\Big|^2 dz, \\
    \mathfrak I^l_2 &:= \sum_{ 2\le \nu \le k/2}\, \sum_{\ell\in L^k_{t} }\sum_{j>  \nu+\ep k+3 }\int_{\mathbb A_j} \Big|\phi_{\nu,\ell}(t,\mathcal L)\Big(\sum_{j'\in  U_j^l}2^{\frac{\alpha}{2}(j'-j)}\chi_{\mathbb A_{j'}} f\Big)(z)\Big|^2 dz, \\
    \mathfrak I^l_3 &:= \sum_{ 2\le \nu \le k/2}\, \sum_{\ell\in L^k_{t} }\sum_{j}\int_{\mathbb A_j} \Big|\phi_{\nu,\ell}(t,\mathcal L)\Big(\sum_{j'\notin  U_j^l}2^{\frac{\alpha}{2}(j'-j)}\chi_{\mathbb A_{j'}} f\Big)(z)\Big|^2 dz.
\end{align*}
Then, in order to show  \eqref{e:low}  it is sufficient to prove that  
\begin{align}\label{e:low1}
    \mathfrak I^l_1 &\le C2^{(\frac{\alpha-1}{2}+\ep)k} ,\\
    \label{e:low23}
    \mathfrak I^l_{{i}} &\le C, \qquad {i=2,3}
\end{align}
for some constant $C = C(d,\alpha,\ep)$. 
We first consider \eqref{e:low1}.
Since $s\to \phi_{\nu,\ell}(t,s)$ is supported in an interval of length $\lesssim 1$ (i.e., $R\lesssim 1$) which is centered at 
$\mu=2^{-2k}(t+\ell)^2  2^{2\nu}\sim 2^{2\nu} $,  using Corollary \ref{lem:trace1} with a suitable choice of $\omega$, we see that
\[
\int_{\mathbb A_j} |\phi_{\nu,\ell}(t,\mathcal L)
 g(z)|^2 dz
\lesssim  2^{-\nu} 2^j    \int |\phi_{\nu,\ell}(t,\mathcal L)g(z)|^2 dz.
\]
Thus, it follows that 
\begin{align}\label{e:il1}
    \mathfrak I^l_1 &\lesssim \sum_{ 2\le \nu \le k/2}\, 2^{-\nu} \, \sum_{j\le  \nu+\ep k+3} 2^{(1-\alpha) j}  \sum_{\ell\in L^k_{t} } \int |\phi_{\nu,\ell}(t,\mathcal L) F_j(z)|^2 dz,
\end{align}
where 
\[\textstyle F_j=\sum_{j'\in U^l_j}2^{\frac{\alpha}{2}j'}\chi_{\mathbb A_{j'}} f. \] 

Write  
$
 \sum_{\ell\in L^k_{t} } \int |\phi_{\nu,\ell}(t,\mathcal L) F_j(z)|^2 dz= \langle ( \sum_{\ell\in L^k_{t} }\phi^2_{\nu,\ell}(t,\mathcal L)) F_j, F_j\rangle.
$
Since $2^{2\nu-k}\le 1$, we have $ \sum_{\ell\in L^k_{t} }\phi^2_{\nu,\ell}(t,s) \lesssim 1$ for $s\ge 1$ and $t\in \supp{\varphi}$. Thus, it follows that 
\[ \sum_{\ell\in L^k_{t} } \int |\phi_{\nu,\ell}(t,\mathcal L) F_j(z)|^2 dz \le C\|F_j\|_2^2.\]
Combining this with \eqref{e:il1} and noting that  $\|F_j\|_2^2\lesssim  2^{\alpha\nu+\alpha \ep k }$, we get
\begin{align*}
\mathfrak I^l_1 &\lesssim \sum_{ 2\le \nu \le k/2} 2^{(\alpha - 1)\nu+\alpha \ep k } \sum_{j\le  \nu+\ep k+3} \!\! 2^{(1-\alpha)j}
\lesssim  \sum_{ 2\le \nu \le k/2}\!\! 2^{(\alpha - 1)\nu+\alpha \ep k }
\end{align*}
since $\alpha>1$. 
This  gives \eqref{e:low1}. 

We turn to the estimate for $\mathfrak I^l_2$. Observe that $ U_j^l \subset \{j+n: n=-1,0,1\} $ if $j>  \nu+\ep k+3$.
From this, we see that
\[
    \mathfrak I^l_2 \lesssim \sum_{|n|\le 1}\sum_{j\ge 0} \Big( \sum_{ 2\le \nu \le k/2}\, \sum_{\ell\in L^k_{t} } \int \big|\phi_{\nu,\ell}(t,\mathcal L)(\chi_{\mathbb A_{j+n}}f)(z)\big|^2 dz \Big)
    \]
As before, one can easily  check  that  $
\sum_{ 2\le \nu \le k/2}\, \sum_{\ell\in L^k_{t} }\phi^2_{\nu,\ell}(t,s)\le C
$
for any $s\ge 1$ and $t\in \supp{\varphi}$.  Hence, we get 
$\mathfrak I^l_2 \lesssim \sum_{|n|\le 1} \sum_{j\ge 0} \|\chi_{\mathbb A_{j+n}}f\|_2^2$. 
As a result, we get \eqref{e:low23} for {$i=2$.}

To complete the proof of \eqref{e:low}, it remains to show \eqref{e:low23} for {$i=3$}. By the Minkowski and H\"older inequalities, we have 
\[ \mathfrak I_3^l\lesssim \sum_{2\le \nu\le k/2}  \sum_{\ell\in L^k_{t} } \sum_j 2^{-\alpha j} \Big(\sum_{j'\notin U^l_j} 2^{\alpha j'/2} |\mathbb A_j|^{1/2} \|\chi_{\mathbb A_j} \phi_{\nu,\ell} (t,\mathcal L) \chi_{\mathbb A_{j'}} f\|_\infty \Big)^2.  \]
Note that $|z-z'|\ge \max\{ 2^{\nu+\ep k}, 2^j,2^{j'}\}$ for $(z,z')\in \mathbb A_j\times \mathbb A_{j'}$ since $j'\notin U^l_j$ for  $j\in \N.$  
Recalling \eqref{phinl},  we note that $\mu\sim 2^{2\nu}$ and $R\lesssim 1$. 
Choosing $N_1\gg1$ and $N_2, N_3>2d$, we make use of 
\eqref{3}  with $N=N_1+N_2+N_3$ to see 
$| \phi_{\nu,\ell} (t,\mathcal L)(z,z')| \lesssim  (1+2^{-\nu}|z-z'|)^{-N_1-N_2-N_3}$ for $(z,z')\in \mathbb A_j\times \mathbb A_{j'}$. Thus, we get  
 \begin{align*}
 |\phi_{\nu,\ell} (t,\mathcal L) (\chi_{\mathbb A_{j'}} f)(z)|
&\lesssim  2^{-\ep N_1k} (\max\{ 2^{j-\nu}, 2^{j'-\nu}\})^{-N_2}2^{d\nu}\|\chi_{\mathbb A_{j'}}f\|_2
\end{align*}
 for  $z\in \mathbb A_j$. 
 Combining this and the above inequality gives 
\begin{align*}
\mathfrak I_3^l
\lesssim 
\sum_{2\le \nu \le k/2} \sum_{\ell\in L^k_{t} }\sum_j  2^{-2\ep N_1k} 2^{-2N_2(j-\nu)} 2^{2d\nu}2^{2dj}
\lesssim  2^{(N_2+d+1-2\ep N_1)k} .
\end{align*}
Hence, the  inequality  yields  \eqref{e:low23} for {$i=3$} if we take $N_1$ large enough such  that $2\ep N_1 > 1+N_2+d$. 

Next we verify \eqref{e:mid} and \eqref{e:high}, of which  proofs  
follow the same line of argument as that of \eqref{e:low}. So, we shall be brief.
To show \eqref{e:mid}, we set
\begin{align*}
    U^m_j &:= \{j'\in\N_0 : \dist(\mathbb A_j,\mathbb A_{j'})\le 2^{(1+\ep)k-\nu}\},\quad j\in\N_0.
 \end{align*}
As before, denoting  $\tilde F_j= \sum_{j'\in  U^m_j}2^{\frac{\alpha}{2}j'}\chi_{\mathbb A_{j'}}f$ and 
\begin{align*}
    \mathfrak I^m_1 &= \sum_{ k/2\le \nu \le k}\, \sum_{\ell\in L^k_{t} }\sum_{ j\le (1+\ep)k-\nu+3}  2^{-\alpha j} \int_{\mathbb A_j} \Big|\phi_{\nu,\ell}(t,\mathcal L) \tilde F_j (z)\Big|^2 dz, \\
    \mathfrak I^m_2 &= \sum_{ k/2\le \nu \le k}\, \sum_{\ell\in L^k_{t} }\sum_{j> (1+\ep)k-\nu+3}  2^{-\alpha j} \int_{\mathbb A_j}  \Big|\phi_{\nu,\ell}(t,\mathcal L) \tilde F_j (z)\Big|^2 dz, \\
    \mathfrak I^m_3 &= \sum_{ k/2\le \nu \le k}\, \sum_{\ell\in L^k_{t} }\sum_{j}2^{-\alpha j} \int_{\mathbb A_j} \Big|\phi_{\nu,\ell}(t,\mathcal L)\Big(\sum_{j'\notin  U_j^m}2^{\frac{\alpha}{2}j'}\chi_{\mathbb A_{j'}} f\Big)(z)\Big|^2 dz,
\end{align*}
we have 
\[\mathfrak I^m \lesssim \mathfrak I^m_1 + \mathfrak I^m_2 + \mathfrak I^m_3.\]

We first  handle  $\mathfrak I^m_1$.   
As before, recalling \eqref{phinl} and applying Corollary \ref{lem:trace1} to the integral in  $\mathfrak I^m_1$, we obtain
\[ \mathfrak I^m_1 \lesssim \sum_{ k/2\le \nu\le k}\,2^{\nu-k}\, \sum_{ j\le (1+\ep)k-\nu+3}2^{-j(\alpha-1)}
\sum_{\ell\in L^k_{t} } \int |\phi_{\nu,\ell}(t,\mathcal L) \tilde F_j(z)|^2 dz. 
\]
Since $\supp \phi_{\nu,\ell}(t,\cdot)$ overlap at most $C$ times, $\sum_{\ell\in L^k_{t} } \int |\phi_{\nu,\ell}(t,\mathcal L) \tilde F_j(z)|^2 dz\lesssim \| {\tilde F_j}\|_2^2$. Thus, we get  
\begin{align*} 
\textstyle  \mathfrak I^m_1 \lesssim   \sum_{ k/2\le \nu\le k}\, 2^{(1-\alpha)\nu}2^{(\alpha-1+\ep\alpha)k}\lesssim 2^{(\frac{\alpha-1}{2} + \ep\alpha)k}
\end{align*}
because $\alpha>1$. Concerning $\mathfrak I^m_2$ and $\mathfrak I^m_3$,  we have the estimates  $\mathfrak I^m_2, \mathfrak I^m_3 \le C $, which 
 one can show  in the same way as  \eqref{e:low23}. More precisely,  the estimate  $\mathfrak I^m_2\le C$ can be obtained similarly as  $\eqref{e:low23}$ for {$i=2$}. 
Likewise,  to show $\mathfrak I^m_3 \le C$, using \eqref{2}  instead of \eqref{3},   one can repeat  the argument which shows \eqref{e:low23} for {$i=3$}.  
 We omit the details.  Combining those estimates for  $\mathfrak I^m_2, \mathfrak I^m_2,$ and $\mathfrak I^m_3$ gives \eqref{e:mid}. 

Finally, to show \eqref{e:high}, 
we break $\mathfrak I^h$ into two parts to have $\mathfrak I^h\lesssim \mathfrak I^h_1+\mathfrak I^h_2$, where 
\begin{align*}
    \mathfrak I^h_1 &:= \sum_{  \nu> k}\, \sum_{\ell\in L^k_{t} }\sum_{j}\int_{\mathbb A_j} \Big|\phi_{\nu,\ell}(t,\mathcal L)\Big(\sum_{j'\in  U_j^m}2^{\frac{\alpha}{2}(j'-j)}\chi_{\mathbb A_{j'}} f\Big)(z)\Big|^2 dz, \\
    \mathfrak I^h_2 &:= \sum_{ \nu > k}\, \sum_{\ell\in L^k_{t} }\sum_{j}\int_{\mathbb A_j} \Big|\phi_{\nu,\ell}(t,\mathcal L)\Big(\sum_{j'\notin  U_j^m}2^{\frac{\alpha}{2}(j'-j)}\chi_{\mathbb A_{j'}} f\Big)(z)\Big|^2 dz.
\end{align*}
Thus, it is sufficient to show that
\[
\mathfrak I^h_1 \le C2^{\ep k},\quad \mathfrak I^h_2 \le C
\]
for $t\in\supp\varphi.$  The second inequality can be obtained in the same manner as the estimate $\mathfrak I^m_3\le C$ shown.
 So, we only prove  the first estimate. 
Since $\nu> k$, $ U_j^m\subset \{j\pm n : n =0,1,\cdots, n_0\}$ for  a positive integer  $n_0$  such that $2^{n_0}\sim 2^{\ep k}$. Thus,
\[ \mathfrak I^h_1 \lesssim {2^{(\alpha+1)\ep k}}\sum_{|n|\le n_0} \sum_{j\ge 0} 
    \Big( \sum_{\nu>k}\, \sum_{\ell\in L^k_{t} }\int \big|\phi_{\nu,\ell}(t,\mathcal L)(\chi_{\mathbb A_{j+n}}f)(z)\big|^2 dz\Big).\]
    Note that  $\sum_{\nu>k}  \sum_{\ell\in L^k_{t} }\phi_{\nu,\ell}^2(t,s)\le C$ for $s\ge 1$ and $t\in \supp{\varphi}$. So,  
    the expression inside the parenthesis is bounded above by  $C \|\chi_{\mathbb A_{j+n}}f\|_2^2$. Thus, we obtain 
 \begin{align*}
   \textstyle \mathfrak I^h_1 \lesssim 2^{(\alpha+1)\ep k}  \sum_{|n|\le n_0} \sum_{j\ge 0}  \|\chi_{\mathbb A_{j+n}}f\|_2^2 \lesssim C{2^{(\alpha+2)\ep k}}
\end{align*}
as desired.

\section{Sharpness of  summability indices}\label{sec:nece}
In this section we discuss sharpness of summability indices 
given  in Theorem \ref{thm:ptconv} and Corollary \ref{thm:aepoly}.  
The following proves the necessity parts of  Theorem \ref{thm:ptconv} and Corollary \ref{thm:aepoly}.

\begin{prop}\label{prop:nece} Let $d\ge1$, $\beta\ge 0$, and   {$p>4d/(2d-1+2\beta)$}. If $0\le \delta< {\gamma(p, 2d,\beta)/2}$, then there exists a measurable function  $f$ such that 
$ \Psi_\beta f\in L^p(\C^d)$ and  \eqref{eq:mea} holds. 
\end{prop}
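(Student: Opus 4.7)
The plan is to construct, for each $\delta<\gamma(p,2d,\beta)/2$, an explicit counterexample $f$ with $\Psi_\beta f\in L^p(\mathbb C^d)$ whose Bochner--Riesz maximal function $\sup_{t>0}|S_t^\delta(\mathcal L)f|$ blows up on a set of positive measure. The guiding principle is that at the critical index $\gamma(p,2d,\beta)/2$ (half of the classical $L^p$ index in dimension $2d$), one can saturate the bound by a single eigenfunction of $\mathcal L$ concentrated near its turning point, thanks to the oscillatory Laguerre asymptotics in Lemma \ref{decay}.

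My first task is to identify good building blocks. For a sequence of eigenvalues $\mu_k\to\infty$ in $2\mathbb N_0+d$ (taken lacunary, e.g.\ $\mu_k\sim 2^{2k}$), I would select an associated special Hermite function $\psi_k=\Phi_{\alpha_k,\beta_k}$ with $2|\beta_k|+d=\mu_k$ --- for instance a radial choice like $\Phi_{(k_0,\ldots,k_0),(k_0,\ldots,k_0)}$ so that $|\psi_k|$ is concentrated on the annulus $|z|\sim\sqrt{\mu_k}$. The asymptotic \eqref{eq:asympt} gives the sharp peak value of $\psi_k$ at its turning point $z_k$, while the bulk oscillation yields the $L^p$ norm; together with the weight factor $\Psi_\beta(z_k)\sim \mu_k^{-\beta/2}$ at a peak $z_k$ with $|z_k|\sim\sqrt{\mu_k}$, a direct computation yields the crucial ratio
\[
\frac{|\psi_k(z_k)|}{\|\Psi_\beta\psi_k\|_p}\gtrsim \mu_k^{\gamma(p,2d,\beta)/2}.
\]
For $t_k$ chosen so that $1-\mu_k/t_k^2$ is of order $\mu_k^{-1}$ (i.e.\ $t_k^2$ lies in the gap between $\mu_k$ and the next eigenvalue $\mu_k+2$), the spectral factor $(1-\mu_k/t_k^2)^\delta\sim\mu_k^{-\delta}$; thus $|S_{t_k}^\delta(\mathcal L)\psi_k(z_k)|\gtrsim\mu_k^{\gamma(p,2d,\beta)/2-\delta}\|\Psi_\beta\psi_k\|_p$, and this quantity grows in $k$ because $\delta<\gamma(p,2d,\beta)/2$.

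Next I would assemble $f=\sum_k a_k e^{i\theta_k}\psi_k$ with coefficients $a_k$ going to zero at a mild rate (say $a_k=k^{-2}\|\Psi_\beta\psi_k\|_p^{-1}$), so that $\Psi_\beta f\in L^p$ by the near-disjointness of the spatial concentration annuli of the $\psi_k$'s (together with Minkowski), and the phases $\theta_k$ (or random signs) are chosen so that cross-interactions cancel on average. To turn the pointwise lower bound at a single $z_k$ into positive measure divergence, I would either translate the construction over a lattice of shifts or use the rotation invariance of the construction to fill out a nontrivial set. The cross-terms $S_{t_k}^\delta(\mathcal L)\psi_\ell$ for $\ell\ne k$ are controlled because $t_k^2$ sits close to $\mu_k$ while $\mu_\ell$ is lacunarily far away: if $\mu_\ell>t_k^2$ the projection contributes nothing, and if $\mu_\ell<t_k^2$ the kernel estimates of Lemma \ref{lem:etapt} localize the operator strongly and the Bochner--Riesz factor is comparable to $1$, so the resulting contribution at $z_k$ is of smaller order and can be absorbed.

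\textbf{Main obstacle.} The central difficulty is making Step 1 precise, i.e.\ verifying that the saturation ratio really is $\mu_k^{\gamma(p,2d,\beta)/2}$ with a matching choice of $z_k$ (with the weight $\Psi_\beta(z_k)$ taken into account). This demands a careful use of \eqref{eq:asympt} in Lemma \ref{decay}: the peak at the turning point contributes size $\mu_k^{-1/6}$ while the bulk yields the $L^p$ norm scaling, and these must combine correctly so that the exponent lands precisely at $\gamma(p,2d,\beta)/2$. A secondary obstacle is ensuring that the lacunary assembly in Step 2 produces divergence rather than mere unboundedness in $k$; handling the cross terms via Lemma \ref{lem:etapt} (especially the kernel localization when $R\ge 1$) and possibly randomizing the signs $\theta_k$ should circumvent this, but care is needed near the endpoint $p=4d/(2d-1+2\beta)$.
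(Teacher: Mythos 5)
Your high-level strategy (a lacunary sum of normalized quasi-eigenmodes, a choice of $t_k$ landing in a spectral gap near $\mu_k$, cross-terms controlled by lacunarity and decay) does match the paper's outline, but the specific construction you propose diverges in a way that creates a real gap. The paper does \emph{not} use a single special Hermite function $\Phi_{\alpha_k,\beta_k}$ and does \emph{not} saturate at the turning point. Instead it sets $g_k(z)=\phi_*^\vee(\mathcal L-\mu_k)(0,z)$ (a smooth spectral bump applied to the kernel row at the origin), computes $\|\Psi_\beta g_k\|_p\sim \mu_k^{d/p-1/2-\beta/2}$ via stationary phase on the annulus $|z|\sim\sqrt{\mu_k}$, and then --- and this is the missing idea in your write-up --- establishes the lower bound on the \emph{fixed} unit annulus $\mathbb A_1$ by observing that $\CP_{\mu_k}g_j=\phi_*^\vee(\mu_k-\mu_j)\,\CP_{\mu_k}(0,\cdot)$, and that the projection kernel $\CP_{\mu_k}(0,z)$, given by a single Laguerre function of type $d-1$ via \eqref{kernel_Pk}, has bulk oscillatory size $\sim\mu_k^{(2d-3)/4}$ for $|z|\sim 1$ by \eqref{eq:asympt}. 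This decouples the spatial location of $g_k$ from the location where the lower bound is shown, so the divergence set is a nested family $E_k\subset\mathbb A_1$ of measure $\gtrsim1$ and no translation or randomization is needed.

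Your turning-point saturation has two concrete problems. First, the claimed Airy-peak size $\mu_k^{-1/6}$ does not match Lemma \ref{decay}: the maximum of $\CL_k^a$ near $r\sim\ell$ is of size $\ell^{-1/4}(\ell^{1/3})^{-1/4}=\ell^{-1/3}$, and for a $d$-fold product or a higher-type Laguerre the exponent is different again; it is not clear the ratio lands at $\mu_k^{\gamma(p,2d,\beta)/2}$. Second, and more structurally, your peak $z_k$ sits at $|z_k|\sim\sqrt{\mu_k}\to\infty$; to obtain divergence on a fixed set of positive measure you must translate or rotate, but the weight $\Psi_\beta$ is not translation-invariant, and translating $\psi_k$ so that its turning sphere passes through a fixed region wrecks the $\|\Psi_\beta\,\cdot\|_p$ computation (the weight at the translated location is $O(1)$, not $\mu_k^{-\beta/2}$). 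Finally, the paper's super-lacunary $\mu_k\sim 2^{2^k}$ is chosen so that the sets $E_k$ are nested for \emph{every} $\delta<\gamma/2$, including $\delta$ arbitrarily close to the threshold; your lacunary $\mu_k\sim2^{2k}$ with $a_k=k^{-2}$ would require a Borel--Cantelli argument and more care near the endpoint. I would recommend replacing the single-eigenfunction, turning-point building block with the kernel-row construction $g_k=\phi_*^\vee(\mathcal L-\mu_k)(0,\cdot)$, working on $\mathbb A_1$, and invoking the identity $|\CP_\mu f|\lesssim\mu^\delta S_*^\delta(\mathcal L)f$ (the Weyl fractional derivative formula) to pass from the projection to the maximal operator.
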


To prove Proposition \ref{prop:nece}, we construct a sequence of functions that behave as if they were  the eigenfunctions of $\mathcal L$ on the set $ \mathbb A_1$. 

\begin{lemma}\label{lem:const} Let $p>4d/(2d-1)$and  $\beta\ge 0$. 
Then, there are sequences $ \{ \mu_k\}\subset 2\N_0 +d$ and $\{f_k\} \subset \mathcal S(\mathbb C^d)$ such that  
\[ 
\mu_k\sim 2^{2^k},\quad \|\Psi_{\beta} f_k\|_{{L^p(\C^d)}}=1,\]
and the following hold
 for a large constant  $k_\circ:$
\Be
\Big |\Big \{ z\in\mathbb A_1: | \CP_{\mu_k} f_k(z)|\ge C_0\mu_k^{{ \gamma(p,2d,\beta)/2}}\Big\}\Big|\ge C_0
\label{c_lower}
\Ee
for a constant $C_0>0$ if  $k\ge k_\circ$, and  for any $N>0$ there is  a constant $ C_N>0$  such that 
\Be
|\CP_{\mu_k}f_j(z)|\le C_N 
{\mu_k^{\gamma(p,2d,\beta)/2}(\mu_j/\mu_k)^{\frac\beta2+\frac12-\frac dp} }
|\mu_k-\mu_j|^{-N}, \quad z\in \mathbb A_1  \label{c_upper}
\Ee
 whenever $j\neq k\ge {k_\circ}$.
\end{lemma}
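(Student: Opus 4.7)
The plan is to construct for each $k$ a normalized wave packet $f_k$ whose $\mathcal{L}$-spectrum is sharply concentrated at the single eigenvalue $\mu_k$, realising the extremal behaviour of the spectral projection viewed as a map from $L^p(\Psi_\beta\,dz)$ into $L^\infty(\mathbb{A}_1)$.  I pick $\mu_k \in 2\mathbb{N}_0 + d$ with $\mu_k \sim 2^{2^k}$, so that the super-exponential gap $|\mu_k-\mu_j|\gtrsim \mu_{\max\{j,k\}}$ holds for $j\neq k$; this separation will be exploited in the proof of Proposition \ref{prop:nece}, not within the lemma itself.

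For the construction, I take $f_k = c_k\,h_k$ with $h_k$ lying in the $\mu_k$-eigenspace of $\mathcal{L}$ and $c_k>0$ fixed by $\|\Psi_\beta f_k\|_{L^p} = 1$.  A natural concrete candidate is $h_k(z) = \mathcal{P}_{\mu_k}(z, z_k^\circ)$ for an appropriately chosen reference point $z_k^\circ \in \mathbb{C}^d$; since the projection kernel admits the twisted-convolution representation involving $L_{n_k}^{d-1}(|z-z_k^\circ|^2/2)\,e^{-|z-z_k^\circ|^2/4}$ (with $n_k = (\mu_k-d)/2$), Lemma \ref{lagu} and the bulk asymptotic \eqref{eq:asympt} yield explicit expressions for both $|h_k(z)|$ on $\mathbb{A}_1$ and the weighted norm $\|\Psi_\beta h_k\|_{L^p}$.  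A careful power-counting balance of these two quantities, tailored to the dominant regime (near-origin vs.\ bulk vs.\ caustic of Lemma \ref{decay}) determined by $p$ and $\beta$, produces the ratio $\mu_k^{\gamma(p,2d,\beta)/2}$ on $\mathbb{A}_1$, which is \eqref{c_lower}.  The pointwise lower bound survives on a set of uniform positive measure because the cosine factor in \eqref{eq:asympt} has phase varying like $\sqrt{\mu_k}\,|z|$ across $\mathbb{A}_1$, so $\{z\in\mathbb{A}_1: |\cos(\cdot)|\ge 1/2\}$ has measure bounded below independently of $k$.

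Since $f_k$ lies entirely in the $\mu_k$-eigenspace, orthogonality of distinct eigenspaces gives $\mathcal{P}_{\mu_k} f_j \equiv 0$ whenever $j\neq k$, so \eqref{c_upper} holds trivially (the stated right-hand side is strictly positive).  The main obstacle is the precise asymptotic bookkeeping of the Laguerre amplitudes: matching the pointwise value on $\mathbb{A}_1$ with the weighted $L^p$-norm to extract exactly the sharp exponent $\gamma(p,2d,\beta)/2$ over the full range $p > 4d/(2d-1)$.  In particular, one must select $z_k^\circ$ (or, if necessary, replace $h_k$ by a finite superposition of projection-kernel slices $\mathcal{P}_{\mu_k}(\cdot, z)$ for $z$ ranging over a small set) so that the dominant contribution to $\|\Psi_\beta h_k\|_{L^p}$ comes from the same spatial region that governs $|h_k|$ on $\mathbb{A}_1$; otherwise a mismatch appears in the large-$p$ regime, where the $L^p$-integral is concentrated near the diagonal rather than in the bulk.
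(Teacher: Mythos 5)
Your core idea — take $f_k$ lying entirely in the $\mu_k$-eigenspace so that $\mathcal P_{\mu_k} f_j\equiv 0$ for $j\ne k$ — cannot work, and you can see this without any Laguerre asymptotics. If $f$ is in the $\mu_k$-eigenspace then $\mathcal P_{\mu_k}f=f$, and since $\Psi_\beta\sim 1$ on $\mathbb A_1$ (which has finite, fixed measure) one has, for $p\ge 2$,
\[
\|f\|_{L^2(\mathbb A_1)}\ \lesssim\ \|f\|_{L^p(\mathbb A_1)}\ \lesssim\ \|\Psi_\beta f\|_{L^p(\C^d)}.
\]
But \eqref{c_lower} together with $\|\Psi_\beta f_k\|_{L^p}=1$ would force $\|f_k\|_{L^2(\mathbb A_1)}^2\gtrsim \mu_k^{\gamma(p,2d,\beta)}$, which blows up since $\gamma(p,2d,\beta)>0$ in the assumed range $p>4d/(2d-1)$. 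So no normalization, no choice of reference point $z_k^\circ$, and no finite superposition of projection-kernel slices can rescue the construction as long as you insist that $f_k$ belong to a single eigenspace. Concretely, for $h_k=\mathcal P_{\mu_k}(0,\cdot)$ the $L^p(\Psi_\beta)$-norm is dominated by the plateau $|\mathcal P_{\mu_k}(0,z)|\sim\mu_k^{d-1}$ on $|z|\lesssim\mu_k^{-1/2}$, giving $\|\Psi_\beta h_k\|_{L^p}\sim\mu_k^{\,d-1-d/p}$; after normalization the size of $f_k$ on $\mathbb A_1$ is $\sim\mu_k^{(2d-3)/4-(d-1-d/p)}=\mu_k^{1/4-d/2+d/p}$, which \emph{decays} rather than reaching $\mu_k^{\gamma(p,2d,\beta)/2}$ when $p>4d/(2d-1)$. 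You anticipated exactly this mismatch in your last paragraph, but the obstruction is not a bookkeeping issue to be fixed by a better $z_k^\circ$ — it is structural.

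The paper's resolution is to step outside the eigenspace: it takes $g_k=\phi_*^\vee(\mathcal L-\mu_k)(0,\cdot)$, a spectral smearing over an $O(1)$-wide window of eigenvalues. Via the oscillatory-integral representation \eqref{lg} and stationary phase, this smearing produces cancellation near the origin, so $g_k$ concentrates on the sphere $|z|\sim\mu_k^{1/2}$ (see \eqref{eq_c_bound}) and $\|\Psi_\beta g_k\|_{L^p}\sim\mu_k^{d/p-1/2-\beta/2}$, which is \emph{far smaller} than the $L^p$-norm of any nonzero eigenfunction with the same value on $\mathbb A_1$. Yet the projection $\mathcal P_{\mu_k}g_k=\phi_*^\vee(0)\,\mathcal P_{\mu_k}(0,\cdot)$ retains the full pointwise size $\sim\mu_k^{(2d-3)/4}$ there, which yields exactly $\mu_k^{\gamma(p,2d,\beta)/2}$ after normalization. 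The cost is that $\mathcal P_{\mu_k}g_j=\phi_*^\vee(\mu_k-\mu_j)\,\mathcal P_{\mu_k}(0,\cdot)\ne 0$ for $j\ne k$, so \eqref{c_upper} is no longer trivial — but the rapid decay of $\phi_*^\vee$ together with the super-exponential separation $\mu_k\sim 2^{2^k}$ handles it easily. In short: the triviality of the cross-terms that you were optimizing for is precisely what forces the lower bound to fail; the genuine content of the lemma is that one must trade off-diagonal orthogonality for the cancellation that only a spectrally smeared test function provides.
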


Assuming  Lemma \ref{lem:const}  for the moment, we prove Proposition \ref{prop:nece}.

\begin{proof}[Proof of Proposition \ref{prop:nece}] 
Let  $ \{ \mu_k\}$ and $\{f_k\}$ be the sequences given in Lemma \ref{lem:const}. 
We consider  $f=\sum_{k= k_\circ}^\infty 2^{-k} f_k$ and  
\[ E_k =\Big \{ z\in \mathbb A_1 :  | S^\delta_*(\mathcal L) f(z)|\ge c\,2^{-k}\mu_k^{-\delta +{ \gamma(p,2d,\beta)/2}} \Big\},\quad k\ge k_\circ
\]
for  a small positive constant $c$ to be chosen later.  For  \eqref{eq:mea}, it is enough to show
\begin{equation}\label{ek_lo}
|E_k|\ge C_0 
\end{equation}
for a constant $C_0>0$ if  $k\ge {k_\circ}$. Indeed, since $\mu_k\sim 2^{2^k}$ and  {$\delta<  \gamma(p, 2d,\beta)/2$}, it is easy to see  that $\{E_k\}$ is a decreasing sequence of measurable sets which converges  to $E:=\{z\in \mathbb A_1 : S^\delta_*(\mathcal L)f(z)=\infty\}$. 
 Thus,   \eqref{eq:mea} follows from \eqref{ek_lo}.

The inequality  \eqref{ek_lo} is an easy consequence of  \eqref{c_lower} and  the inclusion relation 
\begin{equation}\label{inclusion}
\tilde E_k:=  \Big\{ z\in \mathbb A_1: |\CP_{\mu_k}f_k(z)|\ge C_0\mu_k^{{\gamma(p,2d,\beta)/2}}\Big\} \subset E_k.
\end{equation} 
Hence, it is enough to show \eqref{inclusion}. To this end,  we invoke the inequality
\begin{equation}\label{eq:relation}
|\CP_{\mu_k} f(z)|\le \mathfrak C \mu_k^\delta S^\delta_{\ast}(\CL) f(z),\quad k\ge k_\circ,
\end{equation}
which holds with a constant $\mathfrak C$. This follows from the well-known identity
\[F(\mathcal L)=\frac1{\Gamma(\delta+1)}\int_0^\infty F^{(\delta+1)}(t)t^\delta S^\delta_t (\mathcal L) dt,\quad \delta\ge0 \]
for $F\in \mathrm C_c^\infty ([0,\infty))$ 
where    $F^{(\delta)}$ denotes the Weyl fractional derivative of $F$ and $\Gamma(\delta)$ is the  gamma function. Substituting $F=\eta(\cdot -\mu_k)$ for $\eta\in \mathrm C^\infty_c((-1,1))$, we get \eqref{eq:relation}. See \cite{{CDHLY21},LR22} for the detail. 

From \eqref{eq:relation}, we now have
\begin{align}
\label{hh}
S^\delta_{\ast}(\CL) f(z)
\ge {\mathfrak C}^{-1} \mu_k^{-\delta} \Big( 2^{-k}|\CP_{\mu_k}f_k(z)| -\sum_{j\neq k} 2^{-j}|\CP_{\mu_k}f_j(z)|\Big).
\end{align}
Using \eqref{c_upper} with a sufficiently large $N$, we see  that $\sum_{j\neq k} 2^{-j}|\CP_{\mu_k}f_j(z)|$ is bounded above by  a constant times 
\begin{align*}
 \mu_k^{{\gamma(p,2d,\beta)}/2} \sum_{j\neq k}2^{-j} (\mu_j/\mu_k)^{\frac12-\frac dp+\frac\beta2} |\mu_j-\mu_k|^{-N} \lesssim  \mu_k^{{\gamma(p,2d,\beta)/2}} \mu_k^{-1}.
\end{align*}
We  choose a constant  $c$ such that $c< C_0/2\mathfrak C $.
Using \eqref{hh}, for $z\in \tilde E_k$
we have $S^\delta_*(\mathcal L)f(z)\ge c\,2^{-k}\mu_k^{-\delta+{\gamma(p,2d,\beta)/2}}$ if $k$ is large enough. 
Thus,   \eqref{inclusion}  follows.
\end{proof}

We now turn to prove Lemma \ref{lem:const}.

\begin{proof}[Proof of Lemma \ref{lem:const}] 
We take a sequence   $\{\mu_k\}\subset 2\N_0+d$ such that $\mu_k\sim 2^{2^k}$. Set 
\[ g_k (z) =\phi_*^\vee (\CL-\mu_k)(0,z),\quad z\in\mathbb C^d,\]
{where $\phi_*\in \mathrm C^\infty_c((2^{-3}, 2^{-1}))$ defined in Section \ref{sec:max}. 
}  
From \eqref{hoho} and \eqref{id:proparep} we have 
\Be \label{lg} g_k(z) =\frac{C_d}{2\pi} \int \phi_*(t) (\sin t)^{-d} e^{-i(\frac{|z|^2}4 \cot t +\mu_k t)} dt.\Ee
It is easy to see that 
\begin{equation}\label{eq_c_bound}
|g_k(z)|\le 
\begin{cases}
\qquad  \  \ C\mu_k^{-1/2}, & \text{if}\ |z|^2\sim \mu_k,\\
C_N(1+\max\{|z|^2, \mu_k\})^{-N}, &\text{otherwise}
\end{cases}
\end{equation}
for every $N\in\N_0$. 
Indeed,  the phase function $p(t): = -\mu_k t -|z|^2 \cot t/4 $ satisfies  $|p'(t)|\gtrsim \max\{\mu_k,\, |z|^2\}$ for  $t\in \supp \phi_*$  if  $|z|^2\ge C\mu_k$ or $|z|^2\le C^{-1}\mu_k$ for a constant $C>0$. 
So, integration by parts gives $|g_k(z)|\lesssim (1+\max\{\mu_k,\, |z|^2\})^{-N}$ unless $|z|^2\sim \mu_k.$  If $|z|^2 \sim \mu_k$,   
$p''(t)\sim \mu_k$
for  $t\in (2^{-3}, 2^{-1})$.  The stationary phase method  gives  $|g_k(z)|\sim \mu_k^{-1/2}$  (for example, see \cite{St93}). For $k\ge {k_\circ}$ large enough,  
\eqref{eq_c_bound} gives 
\begin{equation}\label{Lp}
\|\Psi_{\beta}g_k\|_{{L^p(\C^d)}}\sim \mu_k^{{d}/p-1/2-\beta/2}.
\end{equation}

Note that 
\Be \label{ho}  
\CP_{\mu_k} g_j(z) = \phi_*^\vee(\mu_k-\mu_j)\CP_{\mu_k}(0,z), 
\Ee
and recall that the kernel of $\CP_{\mu_k}$ is given by 
\begin{equation}\label{kernel_Pk}
 \CP_{\mu_k}(w,z)=\frac 1{(2\pi)^{d}} \Big(\frac{(N_k+d-1)!}{N_k!}\Big)^{\frac12} \Big(\frac{|w-z|^2}2\Big)^{-\frac{d-1}2}\mathcal L^{d-1}_{N_k}\Big(\frac{|w-z|^2}2\Big) e^{\frac i2 \langle w, {\mathbf S}{z}\rangle}
 \end{equation}
with $2N_k+d =\mu_k$ (\cite[Ch. 1--2]{Th93}). 
Using  \eqref{eq:asympt} for $\alpha=d-1$, we see that 
\Be 
\label{haha}
\Big|\Big\{ z\in \mathbb A_1:  |\CP_{\mu_k}(0,z)|\sim \mu_k^{{(2d-3)}/4} \Big\}\Big|\ge C_0
\Ee
for a constant $C_0>0$ and for $k\ge {k_\circ}$ large enough (see, for example, \cite[Proof of Lemma 4.9]{CDHLY21}).  We set 
\[ f_k =g_k/\| \Psi_{\beta} g_k\|_{{L^p(\C^d)}}.\]

It remains to verify \eqref{c_lower} and \eqref{c_upper}.
 In fact,   \eqref{c_lower} follows from \eqref{haha} and \eqref{Lp} since  {$\phi_*^\vee(0)>0$} and  $\CP_{\mu_k} f_j=\phi_*^\vee(\mu_k-\mu_j) \CP_{\mu_k}(0,z)/\|\Psi_{\beta}g_j\|_{{L^p(\C^d)}}.$ Using  \eqref{eq:asympt} for $\alpha=d-1$, one can easily see that $|\CP_{\mu_k}(0,z)|\lesssim \mu_k^{{(2d-3)}/4}$ 
if $|z|\sim 1$. Combining this and  \eqref{Lp},  we have  
\[
 |\CP_{\mu_k} f_j(z)|\lesssim {\mu_k^{\gamma(p,2d,\beta)/2}(\mu_j/\mu_k)^{\frac\beta2+\frac12-\frac dp} }|\phi_*^\vee(\mu_k-\mu_j)|, \quad |z|\sim 1
\]
 for $k, j$ large enough. By rapid decay of $\phi_*^\vee$, this gives \eqref{c_upper}.
\end{proof}

\begin{rem}
\label{l2w} Using $g_k$ in the proof of Lemma \ref{lem:const}, one can easily show that \eqref{e:maxi} fails if $\delta<(\alpha-1)/4$. Indeed, 
making use of \eqref{eq:relation},  \eqref{ho}, \eqref{haha} and  \eqref{Lp}, we see that the estimate \eqref{e:maxi} implies 
\[ \mu_k^{{(2d-3)}/4- \delta}\lesssim  \mu_k^{{d}/2-1/2-\alpha/4}. \]
Taking $k\to \infty$ gives $\delta\ge (\alpha-1)/4$. 
\end{rem}

\section {Bochner--Riesz means for the  Hermite operator}\label{sec:rem}

The operators $\mathcal L$ and $\mathcal H$ have common spectral properties such as periodicity of the associated propagators and  spectrums bounded  away from the zero.
Furthermore we have a similar kernel representation of the operator $\eta((\mu-\mathcal H)/{R})$ as before (cf. \eqref{ex:intop1}) using the propagator $e^{it \mathcal H}$, whose kernel is given by 
\begin{align}\label{id:propher}
e^{-i\frac{t}2\mathcal H}(x,y) = \tilde C_d(\sin t)^{-d/2} e^{i\phi_{\mathcal H}(t,x,y)} 
\end{align}
for a constant $\tilde C_d$ (\cite{Th93, JLR222})  where
\begin{equation}\label{phase_H}
\phi_{\mathcal H}(t,x,y) = 2^{-1}(|x|^2+|y|^2)\cot t-\inp{x}{y}\csc t.
\end{equation}
Making use of those properties, it is not difficult to see that our approach also works for the Bochner--Riesz means for the  Hermite operator. 
In fact, one can prove the following which is different from the result in \cite{CDHLY21} in that 
no upper bound  is required on $\alpha$.

\begin{thm}\label{thm:maxih}
Let $\alpha > 0$ and $\psi_\alpha= (1+|x|)^{-\alpha}$. If $\delta>\max\{(\alpha-1)/4, 0\}$, then for a constant $C>0$ we have the estimate 
\[
\|S_*^\delta(\mathcal H) f \|_{L^2(\R^d, \psi_\alpha)} \le C \|f\|_{L^2(\R^d, \psi_\alpha)}.
\]
\end{thm}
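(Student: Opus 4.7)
\textbf{Proof plan for Theorem \ref{thm:maxih}.} The approach is to mirror the proof of Theorem \ref{thm:maxi}, replacing $\mathcal L$ (acting on $\C^d\cong\R^{2d}$) by $\mathcal H$ (acting on $\R^d$). The two structural features of $\mathcal L$ that powered the argument are both present for $\mathcal H$: the spectrum is contained in $2\N_0+d$, so the propagator $e^{-it\mathcal H}$ is $\pi$-periodic up to a sign, and its kernel has the explicit form \eqref{id:propher}. The first step is to record two preliminary bounds. The local trace inequality
\[
\int_{\mathbb B_d(M)}|\mathcal P_\mu^{\mathcal H}f(x)|^2\,dx \le CM\mu^{-1/2}\|f\|_2^2, \qquad M\ge 1,\ \mu\in 2\N_0+d,
\]
where $\mathcal P_\mu^{\mathcal H}$ denotes the Hermite spectral projector, is already available from \cite{CDHLY21}, and its extension to smoothed spectral cut-offs $\omega(\mathcal H)$ with $\omega$ supported in an interval of length $\sigma$ follows by orthogonal decomposition over eigenspaces, as in Corollary \ref{lem:trace1}.

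The second ingredient is the analogue of Lemma \ref{lem:etapt}: kernel decay rates $(1+R^{1/2}|x-y|)^{-N}$, $(1+R\mu^{-1/2}|x-y|)^{-N}$, $(1+\mu^{-1/2}|x-y|)^{-N}$ in the three regimes $R\ge\mu$, $1\le R<\mu$, $R<1$, with amplitude $R^{d/2}$ in place of $R^d$. Via the usual periodization one writes $\eta((\mu-\mathcal H)/R) = R\int \tilde\eta_R(t)e^{it(\mu-\mathcal H)}\,dt$ with $\tilde\eta_R$ supported in $(-\pi/2-c,\pi/2+c)$ and satisfying derivative bounds like \eqref{i:etadvs}. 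Substituting \eqref{id:propher} and splitting dyadically in $t$, the resulting oscillatory integral has phase derivative
\[
\partial_t(\mu t + \phi_{\mathcal H}(t,x,y)) = \mu - \tfrac{|x-y|^2\cos t}{2\sin^2 t} - \tfrac14\sec^2(t/2)(|x|^2+|y|^2).
\]
Both $t$-dependent terms share the same sign near $t=0$, so one still gets $|\partial_t(\mu t + \phi_{\mathcal H})|\gtrsim |x-y|^2/t^2$ in the relevant off-diagonal regime; repeated integration by parts then reproduces the three estimates of Lemma \ref{lem:etapt} with the stated amplitude.

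Once these tools are in place, the inequality \eqref{i:s*} and the dyadic decomposition $S_t^\rho(\mathcal H)=\sum_k 2^{-\rho k}\phi_k(1-t^{-2}\mathcal H)$ reduce matters to the Hermite analogues of Proposition \ref{prop:main1} and Proposition \ref{prop:main2}. The low-index piece is handled exactly as in Section \ref{sec:max}: the kernel bound together with the coarse estimate $\|\mathcal P_\mu^{\mathcal H}\|_{L^1\to L^\infty}\lesssim \mu^{(d-1)/2}$ yields $|\phi_0(1-R^{-2}\mathcal H)(x,y)|\lesssim R^d(1+R|x-y|)^{-N}$, and the Hardy--Littlewood maximal function applied to $\psi_\alpha\sim \sum_j 2^{-\alpha j}\chi_{\mathbb A_j}$ closes the argument. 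For the square function estimate, the same time decomposition into blocks $t\sim 2^\nu$ and spectral blocks $\phi_{\nu,\ell}(t,\mathcal H)$ of width $\sim 2^{2\nu-k}$ at level $\mu\sim 2^{2\nu}$ applies; combined with the Hermite version of Corollary \ref{lem:trace1} and the low/middle/high split at $2^\nu=2^{k/2}$ and $2^\nu=2^k$, it reproduces \eqref{e:low}, \eqref{e:mid}, and \eqref{e:high} line by line.

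The main technical obstacle is the regime where the new term $\frac14\sec^2(t/2)(|x|^2+|y|^2)$ in $\partial_t\phi_{\mathcal H}$ is comparable to or dominates $|x-y|^2/t^2$; no such term appears for $\mathcal L$. However, since both contributions to $\partial_t\phi_{\mathcal H}$ carry the same sign, the extra term only strengthens the non-stationarity lower bound and produces additional decay in $|x|+|y|$ that can simply be discarded. Once this routine bookkeeping is verified, every step of Sections \ref{sec:max} and \ref{sec:sq} transfers without essential modification, and Theorem \ref{thm:maxih} follows.
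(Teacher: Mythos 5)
Your overall plan matches the paper's Section~\ref{sec:rem}: re-verify the trace inequality (via \cite{CDHLY21} plus the orthogonal-decomposition argument of Corollary~\ref{lem:trace1}) and the kernel decay estimates for $\mathcal H$, after which the reductions of Sections~\ref{sec:max} and~\ref{sec:sq} transfer line by line. Your computation of $\partial_t\phi_{\mathcal H}(t,x,y)$ in the form $-\tfrac{|x-y|^2\cos t}{2\sin^2 t}-\tfrac14\sec^2(t/2)(|x|^2+|y|^2)$ is correct, and so is the observation that for $|t|<\pi/2$ both $t$-dependent terms are negative, so discarding the second one preserves the lower bound $|x-y|^2/t^2$.

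However, there is a genuine gap in the way you pass from the propagator to a periodized oscillatory integral. Unlike $e^{it\mathcal L}$, which is $\pi$-periodic up to a global sign (cf.\ \eqref{id:period}), the Hermite propagator $e^{-it\mathcal H/2}$ only satisfies $\phi_{\mathcal H}(t+\pi,x,y)=\phi_{\mathcal H}(t,x,-y)$, i.e.\ odd shifts flip $y\mapsto -y$. So ``the usual periodization'' does not produce a single amplitude $\tilde\eta_R$ multiplying $e^{i(\mu t/2+\phi_{\mathcal H}(t,x,y))}$; the periodized kernel splits into two pieces, one with phase $\phi_{\mathcal H}(t,x,y)$ and one with $\phi_{\mathcal H}(t,x,-y)$ (the paper's $\tilde n=0,1$ split in the proof of Lemma~\ref{lem:etapth}). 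Your sign argument is valid only for the $\tilde n=0$ piece. For the $\tilde n=1$ piece, $-\partial_t\phi_{\mathcal H}(t,x,-y)=\tfrac{|x-y|^2}{2\sin^2 t}+\tfrac{\inp{x}{y}(1+\cos t)}{\sin^2 t}$, and when $\inp{x}{y}<0$ the second term is \emph{negative}; using $|\inp{x}{y}|\le|x-y|^2/2$ only yields $-\partial_t\phi_{\mathcal H}(t,x,-y)\ge -|x-y|^2\cos t/(2\sin^2 t)$, a useless lower bound. Equivalently, in your variables the $\tilde n=1$ phase derivative near $t=0$ behaves like $\mu/2-|x+y|^2/(2t^2)$, so the oscillation controls $|x+y|$, not $|x-y|$, and these differ precisely when $\inp{x}{y}<0$. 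You must either split the periodization into the two parity pieces and treat the $\tilde n=1$ case separately (the paper does this via a case distinction on $\sgn\inp{x}{y}$), or argue that when $|x-y|\gg\mu^{1/2}$ one automatically has $|x|^2+|y|^2\gg\mu$ so the $\sec^2(t/2)$ term alone already dominates $\mu$, yielding a (weaker, but still usable) non-stationarity bound. As written, your proposal asserts that the extra term ``can simply be discarded,'' which is only true for the $\tilde n=0$ piece.

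A minor inaccuracy: the crude $L^1\to L^\infty$ bound for the Hermite projection on $\R^d$ that feeds Proposition~\ref{prop:main1} is $\|\mathcal P_\mu^{\mathcal H}\|_{L^1\to L^\infty}\lesssim\mu^{(d-2)/2}$, not $\mu^{(d-1)/2}$; your exponent would give $R^{d+1}$ rather than the required $R^d$ after summing $\mu\le R^2$.
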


Consequentially,  we obtain a.e. convergence for functions with growth at infinity.

\begin{coro}
Let  $d\ge 1$ and $\beta\ge 0$. If $\delta > {\gamma(p,d,\beta)/2}$, then  
$
\lim_{t\to\infty} S_t^\delta(\mathcal H) f= f$ {a.e.} 
 whenever ${\psi_\beta} f \in L^p{(\R^d)}$. Conversely, 
   if  a.e. convergence holds for all $f$ satisfying $ {\psi_\beta} f\in L^p({\R^d})$ for some $p\in ({2d/(d-1+2\beta)},  \infty]$, then $\delta \ge {\gamma(p,d,\beta)/2}$.
\end{coro}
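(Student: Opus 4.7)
The corollary is the Hermite analogue of Corollary \ref{thm:aepoly} together with its necessity counterpart Proposition \ref{prop:nece}; I plan to transcribe those proofs with $\mathcal L$ replaced by $\mathcal H$. The weighted $L^2$ maximal bound is supplied by Theorem \ref{thm:maxih}, and the oscillatory information comes from the propagator kernel \eqref{id:propher}, both of which share the structural features that were exploited for $\mathcal L$.

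\textbf{Sufficiency.} Restricting to $p\ge 2$, the starting point is the embedding $L^p(\psi_\beta) \hookrightarrow L^2(\psi_\alpha)$ whenever $\alpha > 2\beta + d(1 - 2/p)$, obtained from H\"older's inequality applied to $|f|^2\psi_\alpha = (|f|\psi_\beta)^2\,\psi_{\alpha-2\beta}$ with exponents $p/2$ and $p/(p-2)$. A short computation identifies the condition $\delta > \gamma(p,d,\beta)/2$ with the existence of $\alpha$ satisfying both $\alpha > 2\beta + d(1-2/p)$ and $\delta > \max((\alpha-1)/4, 0)$, so Theorem \ref{thm:maxih} yields $\|S_*^\delta(\mathcal H)f\|_{L^2(\psi_\alpha)} \lesssim \|f\|_{L^2(\psi_\alpha)}$ for such $\alpha$. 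Combined with trivial pointwise convergence on the dense subspace of finite Hermite expansions, the maximal bound produces a.e.\ convergence throughout $L^2(\psi_\alpha)$, which in particular covers every $f$ with $\psi_\beta f\in L^p$.

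\textbf{Necessity.} Here I would build a test sequence mirroring Lemma \ref{lem:const}. Choose $\mu_k \in 2\N_0 + d$ with $\mu_k \sim 2^{2^k}$ and set $g_k(x) = \phi_*^\vee(\mathcal H - \mu_k)(0,x)$. Using the periodicity of $e^{-it\mathcal H}$ modulo scalars and the kernel formula \eqref{id:propher} (after localizing $t$ to a fundamental domain), $g_k$ admits an oscillatory-integral representation analogous to \eqref{lg}, and stationary-phase analysis gives $|g_k(x)| \sim \mu_k^{-1/2}$ on $|x|^2 \sim \mu_k$ with rapid decay elsewhere, so that $\|\psi_\beta g_k\|_p \sim \mu_k^{d/(2p) - (\beta+1)/2}$ for $p > 2d/(d-1+2\beta)$. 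The asymptotics for the Hermite spectral projection $\mathcal P_{\mu_k}(\mathcal H)(0,\cdot)$ (derived from the analogue of \eqref{eq:asympt} by the computation underlying \eqref{haha}) yield $|\mathcal P_{\mu_k}(\mathcal H)(0,x)| \sim \mu_k^{(d-3)/4}$ on a set of positive measure in a fixed annulus of $\R^d$, together with a matching upper bound throughout that annulus. Setting $f_k = g_k/\|\psi_\beta g_k\|_p$ and $f = \sum_{k\ge k_\circ} 2^{-k}f_k$, the identities $\mathcal P_{\mu_k}(\mathcal H)g_j = \phi_*^\vee(\mu_k - \mu_j)\,\mathcal P_{\mu_k}(\mathcal H)(0,\cdot)$ and $|\mathcal P_\mu(\mathcal H)f| \le C\mu^\delta S_*^\delta(\mathcal H)f$ (the analogue of \eqref{eq:relation}) lead, exactly as in Proposition \ref{prop:nece}, to
\[
S_*^\delta(\mathcal H)f(x) \gtrsim 2^{-k}\mu_k^{\gamma(p,d,\beta)/2 - \delta}
\]
on a set of definite measure independent of $k$; letting $k\to\infty$ proves divergence of $S_*^\delta(\mathcal H)f$ on a set of positive measure whenever $\delta < \gamma(p,d,\beta)/2$.

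\textbf{Main obstacle.} The only ingredient not already assembled in the paper is the two-sided asymptotic of $\mathcal P_{\mu_k}(\mathcal H)(0,x)$ in general dimensions: in dimension one this is classical (Plancherel--Rotach), and for $d\ge 2$ it reduces via the integral representation of $\mathcal P_\mu(\mathcal H)$ to a Laguerre asymptotic essentially identical to the one used in the proof of Lemma \ref{lem:const}. Once this kernel estimate is in place, the rest of the proof is a line-by-line transcription of Sections \ref{sec:max}--\ref{sec:nece} with the obvious replacement of the dimension $2d$ by $d$.
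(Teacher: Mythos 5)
Your proposal is correct and follows the paper's route: sufficiency via the embedding $L^p(\psi_\beta)\hookrightarrow L^2(\psi_\alpha)$ for $\alpha>2\beta+d(1-2/p)$ together with Theorem \ref{thm:maxih} and a density argument (Schwartz functions suffice; ``finite Hermite expansions'' works too but is not needed), and necessity by transcribing Lemma \ref{lem:const} and Proposition \ref{prop:nece} using $\tilde g_k=\phi_*^\vee(\mu_k-\mathcal H)(0,\cdot)$. The exponents you record, $\|\psi_\beta \tilde g_k\|_p\sim\mu_k^{d/(2p)-(\beta+1)/2}$ and $|\tilde\CP_{\mu_k}(0,x)|\sim\mu_k^{(d-3)/4}$ on a set of definite measure in a fixed annulus, are exactly what the paper establishes.

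One caveat on the step you identify as the main obstacle. Your claim that the two-sided bound for $\tilde\CP_{\mu_k}(0,x)$ ``reduces via the integral representation\dots to a Laguerre asymptotic essentially identical'' to the twisted case is accurate for even $d$, where the oscillatory integral $\int_0^{2\pi}(\sin t)^{-d/2}e^{-i(\mu_k t+\tfrac{|x|^2}2\cot t)}dt$ literally coincides (up to constants and a rescaling of $|x|^2$) with the one computed for $\mathcal L$. For odd $d$, however, one does not read a Laguerre function off the integral directly. The paper instead argues structurally: $\tilde\CP_{\mu_k}(0,\cdot)$ is a radial eigenfunction of $\mathcal H$, so, choosing $N_k$ even, Thangavelu's Corollary 3.4.1 forces it to be a scalar multiple $\mathfrak C_k^d$ of $|x|^{1-d/2}\mathcal L^{d/2-1}_{N_k/2}(|x|^2)$; the constant $\mathfrak C_k^d$ is then pinned down up to bounded factors by comparing $\|\tilde\CP_{\mu_k}(0,\cdot)\|_2=(\sum_{|\alpha|=N_k}|\Phi_\alpha(0)|^2)^{1/2}\sim\mu_k^{(d-2)/4}$ with the $L^2$ norm of the Laguerre expression. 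You should add this normalization argument (and the requirement that $N_k$ be even) before declaring the transcription complete; otherwise the endpoint of your sketch is the correct asymptotic and the remainder of the divergence argument goes through verbatim.
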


\subsection*{Necessity part} The necessity  part can be shown in the same manner as in Section \ref{sec:nece}.
Recalling the cutoff function $\phi_*$ defined in Section \ref{sec:max}, we consider
\[
\tilde g_k(x) := \phi_*^\vee(\mu_k - \mathcal H)(0,x), \quad k\in \N, 
\]
 where $\mu_k =2 N_k+d$  is a sequence  such that $\mu_k\sim 2^{2^k}$ and $N_k$ is even.  Set $\tilde f_k=\tilde g_k/\| {\psi_\beta} \tilde g_k\|_p$ and $ \tilde f= \sum_k 2^{-k} \tilde f_k$. 
Then, following the same argument as in Section \ref{sec:nece}, one can see without difficulty that  
\[
\big|\{x\in \R^d : \sup_{t}|S_t^\delta(\mathcal H) \tilde f(x)= \infty\}\big|\gtrsim 1
\]
provided that $\delta < {\gamma(p,d,\beta)}/2$. Indeed, using the Fourier inversion, we write
\Be
\label{kernelh}
\eta\big((\mu-\mathcal H)/{R}\big)f(x) = \frac R{4\pi}\int \wh{\eta}\big({Rt}/2\big) e^{i\frac{t}2(\mu-\mathcal H)}f(x) dt
\Ee
for $\eta\in \mathrm C_c^\infty ((-2,2))$ and $f\in\mathcal S(\R^d)$ (cf. \eqref{hoho}).  From \eqref{id:propher} and \eqref{phase_H} we note that
 \[\tilde g_k(x) ={\tilde C_d} \int \phi_*(t) (\sin t)^{-\frac d2} e^{-i(\frac{|x|^2}4 \cot t +\mu_k t)} dt\] for a constant ${\tilde C_d}$. 
Thus, similarly as before,  we have $\|\psi_{\beta}\tilde g_k\|_{{L^p(\R^d)}}\sim \mu_k^{{d}/(2p)-1/2-\beta/2}.$  

Let $\tilde \CP_\mu$ denote the spectral projection (associated with the Hermite operator  $\mathcal H$ in $\mathbb R^d$) to the space spanned the eigenfunctions of the eigenvalue $\mu$. Then, it follows 
that $\tilde \CP_{\mu_k} \tilde g_j(x) = \phi_*^\vee(\mu_k-\mu_j)\tilde \CP_{\mu_k}(0,x)$. We now claim that 
\begin{equation}\label{e_hermite}
|\tilde\CP_{\mu_k}(0,x)| \sim \Big(\frac{\Gamma (N_k/2+d/2)}{\Gamma(N_k/2+1)}\Big)^{1/2}|x|^{1-\frac d2} |\mathcal L^{d/2-1}_{N_k/2}(|x|^2)|.
\end{equation}
Once we have this, the subsequent  argument  is identical to that of the case of the twisted Laplacian $\mathcal L$. So,  we omit the detail. 

Finally, we verify \eqref{e_hermite}. 
When $d$ is even, \eqref{e_hermite} is immediate from \eqref{lg} and 
\eqref{kernel_Pk}. However, for odd $d$, we need some additional work. 
%
Using \eqref{id:propher}, \eqref{phase_H}, and the fact that $\mu_k\in 2\N_0+d$, we have 
\[\tilde\CP_{\mu_k}(0,x) = c_d \int_0^{2\pi}  (\sin t)^{-d/2} e^{-i( \mu_kt + \frac{|x|^2}2\cot t)}  dt\] 
 (see \cite{JLR222}). Recall  that $N_k$ is chosen to be even. Since $\tilde\CP_{\mu_k}(0,x) $ is radial and since  $\tilde\CP_{\mu_k}(\tilde\CP_{\mu_k}(0,x))=\tilde\CP_{\mu_k}(0,x)$,  by  \cite[Corollary 3.4.1]{Th93}
 we have
\begin{equation}\label{e:proj_ra}
 \tilde\CP_{\mu_k}(0,x) = \mathfrak C_k^d\,  \Big(\frac{\Gamma(N_k/2 +d/2)}{\Gamma(N_k/2+1)}\Big)^{1/2}  |x|^{1-\frac d2}\,\mathcal L^{d/2-1}_{N_k/2}(|x|^2)
 \end{equation}
 for some constant $\mathfrak C_k^d$.  Thus,   \eqref{e_hermite} follows if we  show $|\mathfrak C_k^d|\sim 1$. Writing 
  $\tilde \CP_{\mu_k}(0,x)=\sum_{|\alpha|=N_k}\Phi_\alpha(0)\Phi_\alpha(x)$, by orthogonality of the Hermite functions  we see
\[ \textstyle \| \tilde\CP_{\mu_k}(0,\cdot)\|_2 = \big(\sum_{|\alpha|=N_k} |\Phi_\alpha(0)|^2\,\big)^{1/2} \sim \mu_k^{\frac{d-2}4}\] 
because  $|\Phi_\alpha(0)|\sim \mu_k^{-d/4}$ for  most $\alpha$. Besides, using the polar coordinate and the estimates for the normalized Laguerre functions \cite[(i) in Lemma 1.5.4]{Th93}, one can easily see that  the $L^2$ norm of the right-hand side of  \eqref{e:proj_ra} is comparable to 
\[\textstyle |\mathfrak C^d_k|  \, \mu_k^{\frac {d-2}4}  \left( \int_0^\infty  | \mathcal L^{d/2-1}_{N_k/2}(r^2)|^2 r dr\right)^{1/2} \sim |\mathfrak C^d_k|  \, \mu_k^{\frac {d-2}4}. \]
Therefore, we have $|\mathfrak C_k^d|\sim 1$.

\subsection*{Sufficiency part}  One can prove Theorem \ref{thm:maxih}  in the same manner as  Theorem \ref{thm:maxi}. In fact, it is clear that 
we only need to verify that 
Corollary \ref{lem:trace1}  and Lemma \ref{lem:etapt} hold while  $\mathcal L$  and $\mathbb C^d$ replaced by $\mathcal H$ and $\mathbb R^d$, respectively. 
Since  Corollary \ref{lem:trace1}  follows from  the estimate \eqref{e:trace}, so does the desired estimate for the Hermite operator from the estimate (2.3) in \cite{CDHLY21}.
As for the counterpart of Lemma \ref{lem:etapt}, we have the following.

\begin{lemma}\label{lem:etapth}
Let $\mu\ge 1$, $R>0$, and $\eta\in \mathrm C_c^\infty ((-2,2))$. Then, 
 we have the following for any $N>0$ with the implicit constants depending only on $d, N$. 
\begin{enumerate}
[leftmargin=1cm, labelsep=0.2 cm, topsep=-3pt]
    \item [$(i)'\,\,$] Let $R\ge 1$ and $R\ge \mu$. If $|x-y|\gtrsim R^{-1/2}$, then
    \[
    |\eta((\mu-\mathcal H)/R)(x,y)|\lesssim R^{d/2}(1+R^{1/2}|x-y|)^{-N}. 
    \]
    \item [$(ii)'\,$]  Let $R\ge 1$ and $R<\mu$. If $ |x-y|\gtrsim \mu^{1/2}R^{-1}$, then
    \[
       |\eta((\mu-\mathcal H)/R)(x,y)|\lesssim 
    R^{d/2}(1+R \mu^{-1/2}|x-y|)^{-N}.
\]
\item[$(iii)'$]  Let  $R<1$. If $|x-y|\gtrsim \mu^{1/2}$,  then
\[
     |\eta((\mu-\mathcal H)/R)(x,y)|\lesssim (1+\mu^{-\frac12}|x-y|)^{-N}.
\]
\end{enumerate}
\end{lemma}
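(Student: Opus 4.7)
The plan is to follow the proof of Lemma~\ref{lem:etapt} essentially line by line, with $\mathcal L$ replaced by $\mathcal H$ and $\mathbb C^d$ by $\mathbb R^d$. All three structural ingredients of the earlier argument transfer directly: Fourier inversion is the formula \eqref{kernelh}; the periodicity $e^{-i(t+2\pi n)\mathcal H/2}=(-1)^{nd}e^{-it\mathcal H/2}$ follows from $\mathrm{spec}(\mathcal H)\subset 2\mathbb N_0+d$ (the analogue of \eqref{id:period}); and \eqref{id:propher} furnishes an explicit propagator kernel in place of \eqref{id:proparep}.

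First, I would combine this periodicity with a smooth partition-of-unity function supported in $(-\pi-1/8,\pi+1/8)$ to collapse \eqref{kernelh} to a single-period representation
\[
\eta((\mu-\mathcal H)/R)\;=\;R\int \tilde\eta_R(t)\,e^{it(\mu-\mathcal H)/2}\,dt,
\]
where $\tilde\eta_R$ obeys the analogue of \eqref{i:etadvs}. Inserting \eqref{id:propher} turns the kernel into a one-dimensional oscillatory integral with amplitude $\tilde\eta_R(t)(\sin t)^{-d/2}$ and phase $\Phi(t)=\mu t/2+\phi_{\mathcal H}(t,x,y)$. I would then dyadically decompose $t$ away from the singularities of $(\sin t)^{-d/2}$ (now at $t\in\pi\mathbb Z$), and split the sum as in \eqref{ex:sumetar} into the two pieces $\mathfrak I_1^a$, $\mathfrak I_2^a$ corresponding to each of the three cases.

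The key computation is the phase derivative. A direct calculation, using the identity $(1-\cos t)/\sin^2 t = 1/(1+\cos t)$, yields
\[
\partial_t\phi_{\mathcal H}(t,x,y)\;=\;-\frac{|x-y|^2}{2\sin^2 t}\;-\;\frac{\langle x,y\rangle}{1+\cos t}.
\]
The first summand is the exact counterpart of \eqref{i:1dvphi}; the second is a new, smooth, $t$-regular contribution of size $\sim |\langle x,y\rangle|$ on the relevant range $|t|\le \pi/2$. The main obstacle is handling this extra term inside the integration-by-parts estimates \eqref{i:bdetar1}, \eqref{see1}, \eqref{i:etai2}. My plan is a dichotomy based on whether $|\langle x,y\rangle|\lesssim \mu+R$ or not. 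In the first (classically allowed) regime, the new term is dominated by the $\mu/2$ piece of $\Phi'(t)$ that is already present in the twisted-Laplacian analysis, so the original estimates go through unchanged. In the complementary (classically forbidden) regime, the new term itself dominates $\Phi'(t)$, whence repeated integration by parts yields decay of order $(2^{-j}|\langle x,y\rangle|)^{-N}$, which is much stronger than required by $(i)'$--$(iii)'$. Combining the two regimes and replaying the case analysis of Lemma~\ref{lem:etapt} then produces $(i)'$, $(ii)'$, and $(iii)'$.
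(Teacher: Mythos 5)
Your plan diverges from the paper's proof at a structurally important point, and the difference is not cosmetic: as written there is a genuine gap.

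You reduce to a \emph{single} period by using a smooth partition supported in $(-\pi-1/8,\,\pi+1/8)$ and the $2\pi$-periodicity $e^{-i(t+2\pi n)\mathcal H/2}=(-1)^{nd}e^{-it\mathcal H/2}$. But then the amplitude $(\sin t)^{-d/2}$ is singular not only at $t=0$ but also at $t=\pm\pi$, and --- more importantly --- so is the ``new'' phase term $-\langle x,y\rangle/(1+\cos t)$, since $1+\cos t\to 0$ as $t\to\pm\pi$. Your claim that this term is ``smooth, $t$-regular of size $\sim|\langle x,y\rangle|$ on the relevant range $|t|\le\pi/2$'' is inconsistent with the support of the partition you chose, which reaches out to $|t|\approx\pi$.

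The dichotomy on $|\langle x,y\rangle|\lesssim \mu+R$ versus not does not repair this. Near $t=\pm\pi$, write $t=\pm\pi+s$. Then $\sin^2 t\approx s^2$, $1+\cos t\approx s^2/2$, and the two summands in $\partial_t\phi_{\mathcal H}$ are each of order $s^{-2}$ but \emph{cancel}: using $|x-y|^2+4\langle x,y\rangle=|x+y|^2$ one finds
\[
\partial_t\phi_{\mathcal H}(t,x,y)\;=\;-\frac{|x-y|^2}{2\sin^2 t}-\frac{\langle x,y\rangle}{1+\cos t}\;\approx\;-\frac{|x+y|^2}{2s^2}\qquad (t\to\pm\pi).
\]
If $x\approx -y$ this is small even when $|\langle x,y\rangle|\sim|x|^2$ is large. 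So in the ``classically forbidden'' regime the new term does \emph{not} dominate $\Phi'(t)$ near $t=\pm\pi$, and the integration-by-parts estimate does not produce decay in $|x-y|$ from this region; the phase there is governed by $|x+y|$, which is a quantity your estimate never sees.

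The paper's proof avoids this difficulty by a different reduction: it uses a $\pi$-periodic partition supported in $(-\pi/2-2^{-3},\,\pi/2+2^{-3})$ together with the identity $\phi_{\mathcal H}(t+n\pi,x,y)=\phi_{\mathcal H}(t,x,(-1)^ny)$, producing two oscillatory integrals indexed by $\tilde n=0,1$, each with singularity only at $t=0$ and phase $\mu t/2+\phi_{\mathcal H}(t,x,(-1)^{\tilde n}y)$. The lower bound \eqref{lower} for the phase derivative is then obtained by a uniform argument, not a dichotomy: one uses the sign of the second summand when $\langle x,y\rangle\ge 0$, and the algebraic inequality $|x-y|^2\ge 2|\langle x,y\rangle|$ when $\langle x,y\rangle<0$.

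To salvage your approach one must either (a) introduce the shift $t\to t\mp\pi$ near $t=\pm\pi$, thereby re-deriving the paper's $\tilde n$-decomposition, or (b) handle the $t\approx\pm\pi$ block separately, proving there that the combined phase is governed by $|x+y|^2/s^2$ and then that the contribution is still bounded as in $(i)'$--$(iii)'$, e.g., by exploiting the nonstationary oscillation from $\mu t/2$ when $|x+y|$ is small. As stated, the proposal does neither, so the estimate near $t=\pm\pi$ is unproved.
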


\begin{proof}  The proof is similar to that of Lemma \ref{lem:etapt}. So, we shall be brief.  
Recall    $\eta_*$ in the proof of Lemma \ref{lem:etapt}   which satisfies $\sum_{n\in\Z}\eta_*(t + n\pi) = 1$.  Combining this with \eqref{kernelh} and changing variables $t\to t+n\pi$,  we write
\[\eta\big((\mu-\mathcal H)/{R}\big)f = \frac R{4\pi}\sum_{n\in\Z}\int \wh{\eta} \big({R(t+n\pi)}/2\big)\eta_*(t) e^{i\frac{t+n\pi}2(\mu-\mathcal H)}f dt.\]
Note that  $\phi_\mathcal H(t+n\pi,x,y)=\phi_\mathcal H(t,x,(-1)^n y)$  for $n\in\Z$. Thus, we have 
\[\eta((\mu-\mathcal H)/R)(x,y) =  \sum_{\tilde n=0,1} R\int {\eta}_{R}^{\tilde n}(t) e^{i(\frac {t\mu}2+ \phi_\mathcal H(t,x,(-1)^{\tilde n} y))} dt, \]
where
\begin{align*}
\eta_R^{\tilde n}(t) = \frac{\tilde C_d}{4\pi}\sum_{n\in\Z} e^{-i\mu\pi(2n+\tilde n)/2}\, \frac{\wh{\eta}\big(R(t - (2n+\tilde n)\pi)/2\big)}{( (-1)^{\tilde n}\sin t)^{d/2}}\, \eta_*(t), \quad \tilde n=0,1. 
\end{align*}
It is easy to see that   the estimate \eqref{i:etadvs} with   $\eta_R$  replaced by  $\eta_R^{\tilde n}$ holds. As before, we 
set 
$ \eta_{R,j}^{\tilde n}= {\psi}(2^j \cdot) \eta_R^{\tilde n}$, 
which clearly satisfies 
\Be
\label{hhh}
| (\eta_{R,j}^{\tilde n})^{(k)}(t)|\lesssim  2^{jd/2}(2^j + R)^k\big(1+ R 2^{-j}\big)^{-N},\quad k\in\N_0
\Ee
for any $N>0$.  We  dyadically 
decompose  the kernel {$\eta\big((\mu-\mathcal H)/{R}\big)(x,y) $} to get 
  \[
\eta\big((\mu-\mathcal H)/{R}\big)(x,y) =\sum_{j, \tilde n} K_j^{\tilde n}(x,y):= \sum_{\tilde n=0,1} \sum_{j\ge 1} R \int  \eta_{R,j}^{\tilde n}(t) e^{i(\mu t/2 + \phi_{\mathcal H}(t,x,(-1)^{\tilde n} y)))} dt. 
\] 
To show $(i)'$, we split the sum above as follows: 
\[ \sum_{\tilde n=0,1}  \Big( \mathfrak I_1^{\tilde n, R}+\mathfrak I_2^{\tilde n, R}\Big):=  \sum_{\tilde n=0,1}  \Big(\sum_{j:2^{-j}\ll R^{-1/2}|z-z'| }K_j^{\tilde n} +\sum_{j:2^{-j}\gtrsim  R^{-1/2}|z-z'| }K_j^{\tilde n}\Big).\]
For $(ii)'$ and $(iii)'$, we decompose $\eta\big((\mu-\mathcal H)/{R}\big)(x,y) =\sum_{\tilde n=0,1} ( \mathfrak I_1^{\tilde n, \mu}+\mathfrak I_2^{\tilde n, \mu}) $.
It is rather straightforward  to show the desired estimates for $\mathfrak I_2^{\tilde n, R}$ and $\mathfrak I_2^{\tilde n, \mu}$, for which we do not need to use 
the oscillatory effect of the kernel (see, for example, \eqref{see1}).

To show the estimates for 
$\mathfrak I_1^{\tilde n, R}$ and $\mathfrak I_1^{\tilde n, \mu}$, we need to exploit oscillatory effect. 
However, one can complete the proof without difficulty, following the same lines of argument in the {\it proof of Lemma  \ref{lem:etapt}} once we show
\begin{equation}\label{lower}
 |\partial_t \big({\mu t}/2 + \phi_\mathcal H(t,x, (-1)^{\tilde n} y)\big)| \gtrsim 2^{2j}|x-y|^2
 \end{equation}
for  $t\in \supp (\eta_{R,j}^{\tilde n})$ and $\tilde n=0,1,$ 
provided that $2^{-j}\ll \mu^{-1/2}|x-y|.$ Indeed, this combined with \eqref{hhh} gives, via integration by parts,  
\[ 
|K_j^{\tilde n} (x,y)|\lesssim R |x-y|^{-2M}2^{(d-1-M)j}(1+R2^{-j})^{M-N}.
\] 
Consequently, all the desired estimates follow in the same manner as before.   

 Finally, we verify the estimate \eqref{lower}. Note that 
\[
 -\partial_t\phi_\mathcal H(t,x,(-1)^{\tilde n}y) = \frac{|x-y|^2}{2\sin^2 t} +\frac{\inp{x}{y}}{\sin^2 t} (1-(-1)^{\tilde n} \cos t).
 \]
We distinguish the two cases $\inp{x}{y}\ge0$ and $\inp{x}{y}<0$. For the first case, we have $|\partial_t\phi_\mathcal H(t,x,y)|\ge {|x-y|^2}/(2\sin^2 t)$. 
For the latter, note that $|x-y|^2\ge 2|\inp{x}{y}|$, so $|\partial_t\phi_\mathcal H(t,x,y)|\ge {|x-y|^2} \cos t/(2\sin^2 t)$. Thus, \eqref{lower} follows for $t\in \supp (\eta_{R,j}^{\tilde n})$ and $\tilde n=0,1$. 
As a result,  we get \eqref{lower} since $2^{-j}\ll \mu^{-1/2}|x-y|$.
\end{proof}

\section*{Acknowledgements} This work was supported by  NRF (Republic
of Korea) grants  2020R1F1A1A0-1048520 (Jeong), 2022R1A4A1018904 (Lee), and  KIAS Individual Grant MG087001 at Korea Institute for Advanced
Study (Ryu).

\end{document}